 \newcommand{\lab}[1]{\label{#1}}                
\newcommand{\jc}[1]{{\bf [Jane:} {\color{blue} #1]}}
\newcommand{\remove}[1]{}
\newcommand\eqn[1]{(\ref{#1})}
\newcommand{\be}{\begin{equation}}
\newcommand{\bel}[1]{\begin{equation}\lab{#1}\ }
\newcommand{\ee}{\end{equation}}
\newcommand{\bea}{\begin{eqnarray}}
\newcommand{\eea}{\end{eqnarray}}
\newcommand{\bean}{\begin{eqnarray*}}
\newcommand{\eean}{\end{eqnarray*}}
\newtheorem{thm}{Theorem}
\newtheorem{cor}[thm]{Corollary}
\newtheorem{lemma}[thm]{Lemma}
\newtheorem{claim}[thm]{Claim}
\newtheorem{prop}[thm]{Proposition}
\newtheorem{remark}[thm]{Remark}
\DeclareMathOperator{\gir}{gir}
\def\proof{\noindent{\em Proof.~}}
\def\qed{~~\vrule height8pt width4pt depth0pt}
\def\ss{{\smallskip}}
\def\model{\texttt{M}}
\def\Nminor{\texttt{N-minor}}
\def\crkc{\texttt{crk=c}}
\def\kCircuit{\texttt{k-circ}}
\def\kConnect{\text{k-conn}}
\def\PGr{\texttt{PGr}}
\def\kColour{\texttt{k-crt}}
\newcommand{\mc}{\mathcal}
\newcommand{\FF}{\mathbb{F}}
\newcommand{\rank}{\text{\textnormal{rk}}}
\newcommand{\corank}{\text{\textnormal{crk}}}
\newcommand{\gbinom}[2]{\genfrac{[}{]}{0pt}{}{#1}{#2}}
\newcommand{\spn}[1]{\langle#1\rangle}
\def\E{{\mathcal E}}
\def\G{{\mathcal G}}
\def\I{{\mathcal I}}
\def\S{{\mathcal S}}
\def\ex{{\mathbb E}}
\def\pr{{\mathbb P}}
\def\bfv{{\boldsymbol v}}
\def\bbF{{\mathbb F}}
\def\eps{\varepsilon}
\date{}
\author{Pu Gao \\
University of Waterloo\\
pu.gao@uwaterloo.ca 
\and
Jacob Mausberg\\
University of Waterloo\\
jmausberg@uwaterloo.ca
\and 
Peter Nelson \\
University of Waterloo\\
apnelson@uwaterloo.ca
}
\title{Evolution of random representable matroids: minors, circuits, connectivity and the critical number}
\begin{document}

\maketitle

\begin{abstract}
We study the evolution of random matroids represented by the sequence of random matrices over $\FF_q$ where columns are  added one after the other, and each column vector is a uniformly random vector in $\FF_q^n$, independent of each other. We study the appearance of matroid minors, the appearance of circuits, the evolution of the connectivities and the critical number. We settle several open problems in the literature.
\end{abstract}

\section{Introduction}

Random graphs were first introduced by Erd\H{o}s and R\'{e}nyi~\cite{erdds1959random} in 1959, and in particular they studied the evolution of random graphs by studying a random process of graphs on a set of $n$ vertices, where the edges are added one after the other in the process. The most astonishing phenomenon in the study of random graph evolution is the characterisation of phase transitions of various (often increasing) graph properties. For instance, initially, the graph is acyclic with each component being a tree of bounded order. Then, small cycles may start to appear whereas each component remains in small size and contains at most one cycle. At the time where the number of edges is around $n/2$, small components start to rapidly connect  to each other and form a giant component in which more complicated graph structures start to appear. Long cycles and graph minors of fixed sizes simultaneously appear at the time the giant component emerges. Other well studied graph properties and graph parameters include connectivity, Hamiltonicity, the appearance of given subgraphs, chromatic number, etc.

The Erd\H{o}s-R\'{e}nyi graph process immediately induces a random process on graphical matroids, which motivates the generalisations to other classes of random matroid processes. One generalisation is to consider the matroids represented by the incidence matrices of random uniform hypergraphs, which was introduced by  Cooper, Frieze and Pegden~\cite{cooper2019minors}, and was more formally described and studied as an evolutionary random process in~\cite{gao2023minors}.
The other generalisation is to consider uniformly random vectors over a finite field and add them one after the other independently, and consider the random matroids represented by these matrices. This model was first introduced by Kelly and Oxley~\cite{kelly1982asymptotic,kelly1982threshold}. In their model, they consider random subsets of the elements in the complete projective geometry $PG(n-1,q)$ where $q$ is a prime power. Two related models were introduced and studied. In the first one, $PG(n-1,q;p)$ where $p\in [0,1]$, every element in  $PG(n-1,q)$ is kept independently with probability $p$. In the second model $PG(n-1,q;m)$ where $m$ is an integer between 0 and $(q^n-1)/(q-1)$, a uniformly random subset of $m$ elements of  $PG(n-1,q)$ is selected. Obviously, $PG(n-1,q;p)$ and $PG(n-1,q;m)$ are analogs of $\G(n,p)$ and $\G(n,m)$ for random graphs, and $PG(n-1,q;m)$ is precisely $PG(n-1,q;p)$ conditioned to $|PG(n-1,q;p)|=m$; i.e. exactly $m$ elements are selected. After that, Kordecki~\cite{kordecki1988strictly,kordecki1996small}, Kordecki and Luczak~\cite{kordecki1991random,kordecki1999connectivity} further studied matroid properties of these models, including circuits, connectivity, and submatroids, etc. Soon after the introduction of $PG(n-1,q;m)$ and $PG(n-1,q;p)$, Kelly and Oxley~\cite{kelly1984random} introduced a slightly different model $M([U_q]_{n\times m})$. In this model,   $[U_q]_{n\times m}$ is a uniformly random $n\times m$ matrix over $\FF_q$, and $M([U_q]_{n\times m})$ is the matroid represented by $[U_q]_{n\times m}$. Due to the independence of the column vectors, $[U_q]_{n\times m}$ is a little easier to analyse than  $PG(n-1,q;p)$ and $PG(n-1,q;m)$. Indeed, as noted by all the authors that followed this study, these three models are asymptotically equivalent for all the problems (e.g.\ rank, circuits and connectivity) they were studying. We give a proof of their equivalence in Proposition~\ref{p:equivalence} below.

In this paper, we use the last model introduced by Kelly and Oxley~\cite{kelly1984random}, however we stress that columns are added one by one and we are interested in the evolution of the matroids represented by this random process of matrices. 
Let $\FF_q$ denote the finite field of order $q$ where $q$ is a prime power. Let $\bfv$ be a uniformly random vector in $\FF_q^n$, and let $(v_i)_{i\ge 1}$ be a sequence of random vectors that are independent copies of $\bfv$. Finally, for every $m\ge 1$, let $A_m=[v_1,\ldots,v_m]$ be the $n\times m$ matrix formed by including the first $m$ vectors $v_1,\ldots, v_m$ in the sequence, and let $M[A_m]$ be the matroid represented by $A_m$. Notice that for every $m\ge 1$, $A_m$ has the same distribution as $[U_q]_{n\times m}$. We study various matroid properties and parameters of $M[A_m]$ as $m$ grows. In particular, we take a thorough study of the time when matroid minors of small ranks appear, the appearance of the projective geometry of growing rank of $n$ as a minor, the appearance of the circuits of different lengths, the evolution of the connectivity, and the growth rate of the critical number. We discuss them in turn in the coming subsections. We start our discussions by unifying the notions of the three models $PG(n-1,q;p)$, $PG(n-1,q;m)$ and $M[A_m]$ and show their asymptotic equivalence. The following Gaussian binomial coefficients will be used throughout the paper, which counts the $k$-dimensional subspaces of $\FF_q^n$: 
$$
\gbinom{n}{k}_q:=\prod_{i=0}^{k-1}\frac{q^{n-i}-1}{q^{k-i}-1}; \quad [n]_q:=\gbinom{n}{1}_q=\frac{q^n-1}{q-1}.  
$$
Notice that $PG(n-1,q)$ contains exactly $[n]_q$ elements.
Given a sequence of probability spaces indexed by $n$, we say a sequence of events $A_n$ occurs asymptotically almost surely (a.a.s.) if $\lim_{n\to\infty}\pr(A_n)=1$. The standard Landau notation and basic matroid notions such as simple matroid, free matroid, and the rank of a matroid will be introduced in Section~\ref{sec:preliminary}.

\begin{prop}\label{p:equivalence} 
    The three models (\model1): $M[A_m]$, (\model2): $PG(n-1,q; m)$ and (\model3): $PG(n-1,q; p)$, are related as follows:
    \begin{enumerate}[(a)] 
        \item (\model1), conditioned on the event that $M[A_m]$ is simple, is equivalent to (\model2).
        \item (\model3), conditioned on the event that precisely $m$ elements of $PG(n-1,q)$ are included, is equivalent to (\model2).
        \item If $m=o(q^{n/2})$ then (\model1) and (\model2) are asymptotically equivalent. 
    \end{enumerate}
\end{prop}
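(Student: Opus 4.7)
The strategy is to handle the three parts in order, deducing (c) from (a) together with a birthday-paradox-style estimate.

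For (a), observe that $M[A_m]$ is completely determined by the tuple of $1$-dimensional subspaces $L_i := \spn{v_i} \in PG(n-1,q)$, whenever none of the $v_i$ is zero, since rescaling a column of a matrix does not change the linear-dependence relations among its columns. Now $M[A_m]$ is simple exactly when each $v_i$ is nonzero and no two $v_i, v_j$ are nonzero scalar multiples, i.e.\ when the $L_i$ are well-defined and pairwise distinct. Since each $v_i$ is uniform on $\FF_q^n$ and the $v_i$ are independent, conditioning on simplicity makes the ordered tuple $(L_1, \ldots, L_m)$ uniform over all ordered $m$-tuples of distinct elements of $PG(n-1,q)$. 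Forgetting the ordering, $\{L_1, \ldots, L_m\}$ is then a uniform random $m$-subset of $PG(n-1,q)$, giving the distribution defining \model2.

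Part (b) is the familiar fact that a product of independent Bernoullis, conditioned on its sum, yields the uniform distribution on subsets of the prescribed size: for any $m$-subset $S \subseteq PG(n-1,q)$, $\pr(\text{selected set} = S) = p^m(1-p)^{[n]_q - m}$, which does not depend on $S$, so conditioning on inclusion count $m$ gives the uniform distribution over $m$-subsets.

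For (c), by part (a) it suffices to show that $M[A_m]$ is simple asymptotically almost surely when $m = o(q^{n/2})$. I would bound $\pr(\text{some }v_i = 0) \leq m q^{-n}$, and for each pair $i < j$, $\pr(v_i, v_j \text{ are nonzero parallel}) \leq (q-1) q^{-n} < q^{1-n}$, so a union bound yields $\pr(M[A_m] \text{ not simple}) \leq m q^{-n} + \binom{m}{2} q^{1-n}$, which is $o(1)$ whenever $m = o(q^{n/2})$ (treating $q$ as a fixed constant, so $m = o(q^{n/2})$ is equivalent to $m^2 q^{1-n} = o(1)$ up to the factor $q$). Combining with (a), the total-variation distance between the laws of $M[A_m]$ and of $PG(n-1,q;m)$ is at most $\pr(M[A_m] \text{ not simple}) = o(1)$, which gives the asserted asymptotic equivalence. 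There is no serious obstacle; the only delicate point is the parallel-pair union bound in (c) and interpreting ``asymptotic equivalence'' as closeness in total variation, which follows immediately once the non-simplicity probability is $o(1)$.
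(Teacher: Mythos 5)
Your proposal is correct and follows essentially the same route as the paper: parts (a) and (b) are the direct/obvious equivalences (which you simply spell out in more detail), and part (c) reduces via (a) to showing $M[A_m]$ is a.a.s.\ simple, using the same union bound over zero columns and parallel column pairs (your per-pair bound $(q-1)q^{-n}$ matches the paper's $[n]_q(q-1)^2q^{-2n}$). The explicit total-variation interpretation is a harmless addition.
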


\begin{proof} Parts (a,b) are obvious. For part (c),
    suppose that $m=o(q^{n/2})$.
    By part (i), it suffices to show that a.a.s. $M[A_{m}]$ is simple. The probability that $A_m$ has a zero column is at most $mq^{-n}=o(1)$. Each element in $PG(n-1,q)$ corresponds to exactly $q-1$ vectors in $\FF_q^n$. Thus, the probability that $A_m$ has  two linearly dependent columns is at most $\binom{m}{2} [n]_q (q-1)^2 q^{-2n}=o(1)$. Hence, a.a.s.\ $M[A_m]$ is a simple matroid. \qed
    \end{proof}

\subsection{Minors}
Let $N$ be an $\FF_q$-representable matroid. Let $\tau_{\Nminor}$ be the smallest $m$ such that $M[A_m]$ contains $N$ as a minor (the definition of matroid minor is given in Section~\ref{sec:preliminary}).
Altschuler and Yang~\cite{altschuler2017inclusion} determined the critical window in which $\tau_{\Nminor}$ lies, provided that the size of $N$ is fixed, i.e.\ independent of $n$. Given a matrix or a matroid $M$, let $\corank(M)$ denote the co-rank of $M$.
Given a random variable $X_n$ and a real number $x_n$, we write $X_n=O_p(x_n)$ if
 $$\lim_{\eps\to 0}\limsup_{n\to\infty}\left(\pr(X_n<\eps x_n)+\pr(X_n>\eps^{-1}x_n)\right)=0.
$$

\begin{thm}\label{Altschuler and Yang} 
    Suppose $N$ is a fixed non-free $\FF_q$-representable matroid. 
    \begin{enumerate}[(a)]
    \item (Theorems 5 and 6 of~\cite{altschuler2017inclusion}) Let $k\ge 0$ be a positive integer. There exist constants $C,D>0$ depending on $N$, $q$ and $k$ such that 
    \begin{align*}
    & \liminf_{n\to\infty}\pr(\tau_{\Nminor}\le m) >C \quad \text{if } m=n+k \ \text{and}\ k\ge 1\\
    & \limsup_{n\to\infty}\pr(\tau_{\Nminor}\le m) \le D \quad \text{if } m= n-k.
    \end{align*}
    \item (Theorems 3 and 8 of~\cite{altschuler2017inclusion})  $\tau_{\Nminor} =n+O_p(1)$.
    \end{enumerate}
\end{thm}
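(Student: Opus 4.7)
The plan is to prove matching tail bounds on $\tau_{\Nminor}-n$: quantitative upper bounds on $\pr(\tau_{\Nminor}\le n-k)$ and lower bounds on $\pr(\tau_{\Nminor}\le n+k)$, the former becoming small as $k\to\infty$ (giving both the upper bound in (a) and the lower tail in (b)) and the latter becoming close to $1$ as $k\to\infty$ (giving both the lower bound in (a) and the upper tail in (b)). The workhorse identity is that $m$ i.i.d.\ uniform vectors in $\FF_q^n$ are linearly independent with probability $\prod_{j=0}^{m-1}(1-q^{j-n})$, which for $m=n-k$ equals $\prod_{j=k+1}^n(1-q^{-j})\ge p^*_k:=\prod_{j=k+1}^\infty(1-q^{-j})>0$.

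For the upper bound in (a) and the lower tail in (b): when $A_{n-k}$ has full column rank, $M[A_{n-k}]$ is the free matroid on $n-k$ elements and has no non-free minor, so $\pr(\tau_{\Nminor}\le n-k)\le 1-p^*_k+o(1)$. Any $D>1-p^*_k$ works for (a), and since $p^*_k\to 1$ as $k\to\infty$, we also get $\pr(\tau_{\Nminor}<n-K)\to 0$ uniformly in $n$ as $K\to\infty$, which is the lower tail of $\tau_{\Nminor}=n+O_p(1)$.

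For the lower bound in (a), fix an $\FF_q$-representation $B=(b_1,\ldots,b_s)\in\FF_q^{r\times s}$ of $N$, with $r=\rank(N)$, $s=|E(N)|$, and $c=s-r=\corank(N)$. Assume first $k\ge c$. Let $\E_1$ be the event that $v_1,\ldots,v_{n-r}$ are linearly independent; $\pr(\E_1)\ge p^*_r>0$. Conditional on $\E_1$ and on $T:=\spn{v_1,\ldots,v_{n-r}}$, the remaining $r+k$ vectors project to i.i.d.\ uniform elements of $\FF_q^n/T\cong\FF_q^r$; demanding that the first $s$ projections equal $b_1,\ldots,b_s$ in order has conditional probability $q^{-rs}$, and on this event contracting $T$ and deleting the last $k-c$ columns yields a minor isomorphic to $M[B]\cong N$. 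Hence $\pr(\tau_{\Nminor}\le n+k)\ge q^{-rs}p^*_r>0$. For $1\le k<c$, first condition on all $n+k$ columns lying in a common $(n+k-c)$-dimensional subspace $H$, a positive-probability event (the expected number of such subspaces is $\Theta(q^{-c(c-k)})$, with a matching lower bound from standard rank estimates); inside $H$ the columns are conditionally i.i.d.\ uniform, reducing to the previous case in ambient dimension $n+k-c$ with overshoot exactly $c$.

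The main obstacle is the upper tail in (b): the construction above only gives a positive-probability bound since $\pr(\E_1)\to p^*_r<1$. To upgrade to a high-probability bound, replace $\E_1$ by a greedy basis stopping time $\sigma:=\inf\{j:\rank(v_1,\ldots,v_j)=n-r\}$. A geometric-series computation gives $\ex[\sigma]-(n-r)=O(q^{-r})$ with exponentially decaying tails, so $\sigma-(n-r)=O_p(1)$. By the strong Markov property of the i.i.d.\ column sequence, conditional on $\sigma$ and on $T_\sigma$ the vectors $v_{\sigma+1},v_{\sigma+2},\ldots$ are i.i.d.\ uniform in $\FF_q^n$ independent of $T_\sigma$, so their projections onto $\FF_q^n/T_\sigma$ are i.i.d.\ uniform in $\FF_q^r$. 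For any $\eps>0$, choose $L$ with $\pr(\sigma-(n-r)>L)<\eps/2$ and then $K_0$ with $s(1-q^{-r})^{r+K_0}<\eps/2$; taking $K=L+K_0$, a union bound over the $s$ target vectors $b_i$ yields $\pr(\tau_{\Nminor}>n+K)<\eps$, completing part (b).
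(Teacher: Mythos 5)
This theorem is not proved in the paper at all --- it is quoted from Altschuler and Yang --- so there is no in-paper proof to match yours against step by step. The closest internal material is the stronger Theorem~\ref{very small minor} together with Corollary~\ref{thm:one_step} and Theorem~\ref{thm:point-prob}, whose proofs rest on the same mechanism you exploit: after contracting a set of columns spanning a corank-$r$ subspace, the later columns project to i.i.d.\ uniform vectors of $\FF_q^r$ (compare your quotient argument with Claims~\ref{claim:span} and~\ref{claim:R}). Within that caveat, your proposal is an essentially correct self-contained derivation of the cited statement: the upper bound in (a) and the lower tail in (b) via Lemma~\ref{full row rank} (full column rank forces a free matroid, and minors of free matroids are free) are exactly right, and your stopping-time refinement --- contract the first $\sigma$ columns, where $\sigma$ is the first time the rank reaches $n-r$, then coupon-collect the columns $b_1,\dotsc,b_s$ of a fixed rank-$r$ representation among the i.i.d.\ uniform projections in $\FF_q^r$ --- does give the uniform-in-$n$ upper tail needed for (b), since both $\pr(\sigma>n-r+L)$ and the collection failure probability are bounded independently of $n$. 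What the paper later proves (the hitting time $\tau_{\crkc}$ and the limiting distribution of $\tau_{\Nminor}-n$) is strictly sharper, but is not needed for this statement.

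Two steps need tightening. First, in the case $1\le k<\corank(N)$ of part (a), the claim that, conditioned on all $n+k$ columns lying in a common $(n+k-c)$-dimensional subspace $H$, ``the columns are conditionally i.i.d.\ uniform inside $H$'' is not literally correct, because $H$ is random and the conditioning event is a union over subspaces; given the exact span, the columns are uniform in it \emph{conditioned to span it}, which is not an i.i.d.\ law. The repair is routine: for a \emph{fixed} $H_0$ of dimension $n+k-c$, given that all columns lie in $H_0$ they are i.i.d.\ uniform in $H_0$, and your $k\ge c$ construction carried out inside $H_0$ forces the span to equal $H_0$ (the representation has rank $r$), so these constructed events are disjoint across $H_0$ and may be summed, yielding probability $\asymp q^{-c(c-k)}q^{-rs}>0$; alternatively, a direct construction (first $n-r-(c-k)$ columns independent, the remaining $s$ columns hitting prescribed cosets of the quotient of dimension $r+c-k$) avoids conditioning entirely. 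Second, the union bound $s(1-q^{-r})^{r+K_0}$ implicitly assumes the representation columns $b_1,\dotsc,b_s$ are distinct; since $N$ need not be simple, some values may have to be collected with multiplicity. Either adjust the coupon-collector bound accordingly, or simply require that some block of $s$ consecutive post-$\sigma$ columns matches $(b_1,\dotsc,b_s)$ exactly, with failure probability $(1-q^{-rs})^{\lfloor K_0/s\rfloor}\to 0$ as $K_0\to\infty$. Both are minor repairs and do not affect the soundness of your approach.
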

\vspace{0.3cm}

Note that if $N$ is a free matroid with rank $r$ such that $n-r\to\infty$ then it is easy to see (e.g.\ it follows easily from the proof of Lemma~\ref{full row rank}) that a.a.s.\ all the columns of $A_r$ are linearly independent and thus a.a.s.\ $\tau_{\Nminor}=r$. On the other hand, if $N$ is  not $\FF_q$-representable then $N$ can never be a minor of $M[A_m]$, no matter how large $m$ is. Therefore, in the discussions of matroid minors we only focus on non-free $\FF_q$-representable matroid.

Altschuler and Yang gave specific bounds $C,D$ in Theorem~\ref{Altschuler and Yang}(a). Interested readers can find them in~\cite{altschuler2017inclusion}. We do not express them here, as these bounds  only give qualitative information about $\pr(|\tau_{\Nminor}-n|>k)$ for $k\to\infty$, which they used to come to the conclusion of Theorem~\ref{Altschuler and Yang}(b). However these bounds give little information about $\pr(\tau_{\Nminor}\le n+k)$ when $|k|$ is small. Our first result is a strengthening of Theorem~\ref{Altschuler and Yang}(a) by providing the precise limiting distribution of $\tau_{\Nminor}-n$. For convenience, we define $\sum_{i=j}^h a_i$ to be 0 and $\prod_{i=j}^h a_i$ to be 1 if $h<j$ for any sequence of real numbers or real functions $a_i$. Given a power series $P(z)$ let $[z^n] P(z)$ denote the coefficient of $z^n$ in $P(z)$.

\begin{thm}\label{thm:point-prob}     Let $N$ be a fixed $\FF_q$-representable matroid and let $r$ and $c$ denote the rank and the co-rank of $N$ respectively. Suppose that $c\ge 1$.
 Then, for any fixed $k\in {\mathbb Z}$,
 \[
\lim_{n\to\infty} \pr(\tau_{\Nminor}=n+k)=C_{c,k},
 \]
where $C_{c,k}=0$ if $k>c$; and if $k\le c$ then
\begin{eqnarray*}
C_{c,k}&=&\beta_{c,k} q^{k-c} \sum_{i=0}^{c-1}\left(\alpha_{c,k,0}+ \sum_{i=1}^{c-1} \frac{1}{\prod_{j=1}^i(1-q^{-j})} \alpha_{c,k,i}\right),\\
\beta_{c,k}&=&\prod_{j=c+1-k}^{\infty}(1-q^{-j}),\\
\alpha_{c,k,i}&=&[z^{c-1-i}]\prod_{j=0}^{c-k-1}(1-zq^{-j})=(-1)^{c-1-i}\sum_{*} q^{-\sum_{r=1}^{c-1-i} j_r},
\end{eqnarray*}
where the summation $\sum_{*}$ in the expression for $\alpha_{c,k,i}$ is over all integers $0\le j_1<j_2<\ldots<j_{c-1-i}\le c-k-1$.
\end{thm}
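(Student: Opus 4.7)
The plan is to separate the matroid-minor question from a purely combinatorial rank question, and then compute the asymptotic rank distribution explicitly via $q$-binomial identities.

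The main structural input is a reduction lemma: for any fixed $k \in \ZZ$, the event that $M[A_{n+k}]$ contains $N$ as a minor differs from the event $\{\corank(M[A_{n+k}]) \geq c\}$ by an event of probability $o(1)$. One direction is immediate: since co-rank is non-increasing under taking minors, containing $N$ requires $\corank(M[A_{n+k}]) \geq c$. For the other direction, I would condition on $\rank(A_{n+k}) = R \leq n+k-c$; by symmetry, the column space is a uniformly random $R$-dimensional subspace $V$ of $\FF_q^n$, and the columns themselves are i.i.d.\ uniform on $V$. I would then use a second-moment argument on the count of ordered pairs $(I, J)$ with $I \subseteq [n+k]$ independent of size $R-r$ and $J \subseteq [n+k]\setminus I$ of size $(n+k-R)-c$ such that $M[A_{n+k}]/I \setminus J \cong N$: the expectation is polynomial in $n$ times the positive constant $p_N$ (the probability that $r+c$ i.i.d.\ uniform vectors in $\FF_q^r$ yield a matroid isomorphic to $N$), and a variance calculation (dominated by near-disjoint pairs) shows that the count is a.a.s.\ positive. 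The lemma yields $\pr(\tau_{\Nminor} \leq n+k) = \pr(\corank(M[A_{n+k}]) \geq c) + o(1)$. In particular, when $k > c$ we have $\corank(M[A_{n+k}]) \geq k > c$ deterministically, so both shifted probabilities are $1$ and differencing gives $C_{c,k}=0$.

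Next, I would compute the asymptotic rank distribution using the classical identity
\[
\pr(\rank(A_m)=r) = q^{-nm}\gbinom{n}{r}_q \gbinom{m}{r}_q \prod_{i=0}^{r-1}(q^r-q^i),
\]
together with the asymptotic $\gbinom{n}{j}_q \sim q^{j(n-j)}/\prod_{i=1}^j(1-q^{-i})$. For fixed $k,j$ with $k+j\geq 0$, this gives
\[
\lim_{n\to\infty} \pr(\rank(A_{n+k})=n-j) = q^{-j(k+j)} \cdot \frac{\prod_{i=j+1}^{\infty}(1-q^{-i})}{\prod_{i=1}^{k+j}(1-q^{-i})},
\]
and summing over $j \geq \max(0, c-k)$ gives $\lim_{n\to\infty}\pr(\corank(M[A_{n+k}]) \geq c)$. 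Combining with the reduction lemma,
\[
C_{c,k} = \lim_{n\to\infty}\left[\pr(\corank(M[A_{n+k}]) \geq c) - \pr(\corank(M[A_{n+k-1}]) \geq c)\right].
\]
Massaging the resulting expression via $q$-Pochhammer identities yields the stated formula: $\beta_{c,k}$ is the common infinite tail that remains after cancellation between the two shifts of $n$; the prefactor $q^{k-c}$ collects the critical $j=c-k$ contribution; and the coefficients $\alpha_{c,k,i}$ encode the finite partial product $\prod_{j=0}^{c-k-1}(1-zq^{-j})$ via the generating-function identity built into the theorem statement.

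The chief technical obstacle is the reduction lemma in the critical regime $\corank(M[A_{n+k}]) = c$, where the minor construction has no slack: each individual choice of $(I,J)$ has only the constant success probability $p_N < 1$, so the conclusion must come from the polynomial number of available contraction sets combined with a careful second-moment bound on the count of witnesses. The rank-distribution asymptotic and the subsequent $q$-product simplification are routine but computationally involved.
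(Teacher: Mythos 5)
Your overall strategy is sound and rests on the same structural reduction the paper uses---replace the minor event by the corank event---but you execute both halves differently. The paper gets the reduction for free from Theorem~\ref{very small minor} (a.a.s.\ $\tau_{\Nminor}=\tau_{\crkc}$), which it proves by splitting the independent columns into $\Theta(n/r)$ disjoint blocks so that the contracted ``red'' columns yield i.i.d.\ uniform $r\times c$ matrices and hence genuinely independent trials; you instead propose re-deriving the reduction by conditioning on the rank and running a second-moment count of witness pairs $(I,J)$. For the computation, the paper evaluates $\pr\big(\rank(A_{n+k-1})=\rank(A_{n+k})=n+k-c\big)$ directly, via the probability generating function of the independent geometric variables counting redundant columns and Euler's $q$-series identity, which produces the stated closed form in one pass; you instead use the exact count of rank-$r$ matrices over $\FF_q$, take limits (your limit $q^{-j(k+j)}\prod_{i=j+1}^{\infty}(1-q^{-i})/\prod_{i=1}^{k+j}(1-q^{-i})$ for $\pr(\rank(A_{n+k})=n-j)$ is correct), and difference the corank CDFs at $m=n+k$ and $m=n+k-1$. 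Both computational routes are legitimate: yours uses a clean classical rank input, while the paper's generating-function route delivers the target formula directly.

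Two points fall short of a complete proof. First, in your reduction lemma the distributional claim is off as stated: conditioned on $\rank(A_{n+k})=R$ and on the column space being $V$, the columns are i.i.d.\ uniform on $V$ only after further conditioning on their spanning $V$; this dependence (and the conditioning on the exact corank in the no-slack regime $R=n+k-c$) is precisely what makes the ``constant probability per witness, near-disjoint pairs dominate'' second-moment argument non-routine, and it is the complication the paper's block construction is designed to avoid. The claim is true and your route can be made to work, but the correlation analysis under the conditional measure is a real piece of work that your sketch does not supply. Second, and more importantly, the theorem asserts a specific formula, and your argument ends with ``massaging via $q$-Pochhammer identities yields the stated formula.'' What you have actually produced is a different (correct) expression for the limit, namely a difference of two infinite sums over $j\ge\max(0,c-k)$; identifying it with $\beta_{c,k}\,q^{k-c}$ times the finite combination of the coefficients $\alpha_{c,k,i}$ is a genuine $q$-series identity---this is exactly the step where the paper invokes Euler's identity to extract $[z^{c-1}]$ of the relevant product---and it must be carried out (or the point probability computed directly, as the paper does) before the statement as written is established.
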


We plot below the limiting point-wise probabilities $\pr(\tau_{\Nminor}=n+k)$ for $k\in [-10,5]$ when $(q,c)=(2,1)$ (the one on the left) and when $(q,c)=(2,2)$ (the one in the middle). The last figure on the right compares the limiting cumulative distribution function $\pr(\tau_{\Nminor} \le n+k)$, in red dots, with the bounds in Theorem~\ref{Altschuler and Yang}(a)  by Altschuler and Yang, in green curve. Recall that the bounds in Theorem~\ref{Altschuler and Yang}(a) are upper bounds for non-positive $k$ and lower bounds for positive $k$.

\noindent\begin{minipage}{0.34\textwidth}
    \includegraphics[width=\linewidth]{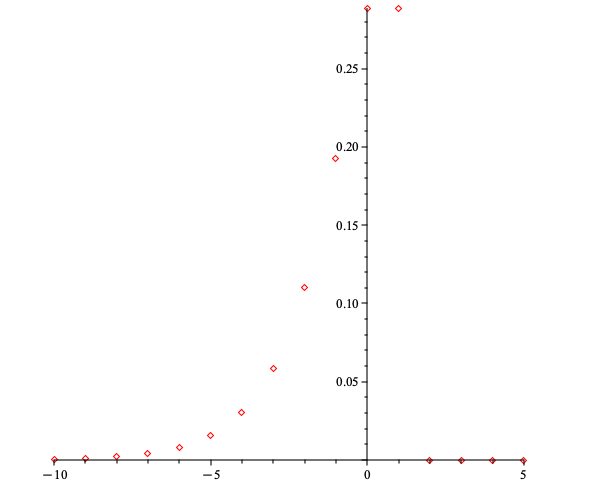}
\end{minipage}%
\hfill%
\begin{minipage}{0.35\textwidth} 
\includegraphics[width=\linewidth]{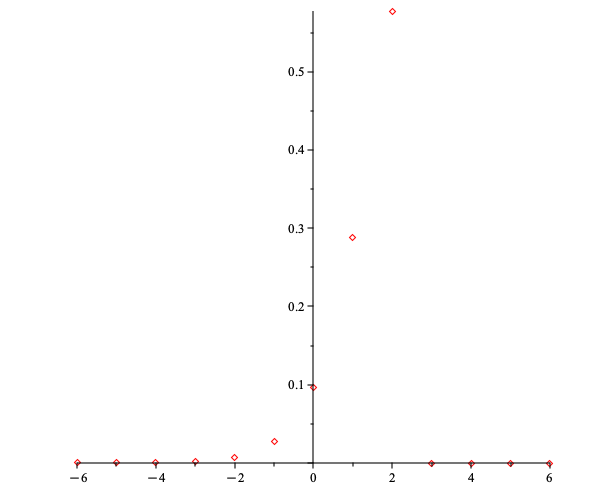}
\end{minipage}
\hfill%
\begin{minipage}{0.30\textwidth} 
\includegraphics[width=\linewidth]{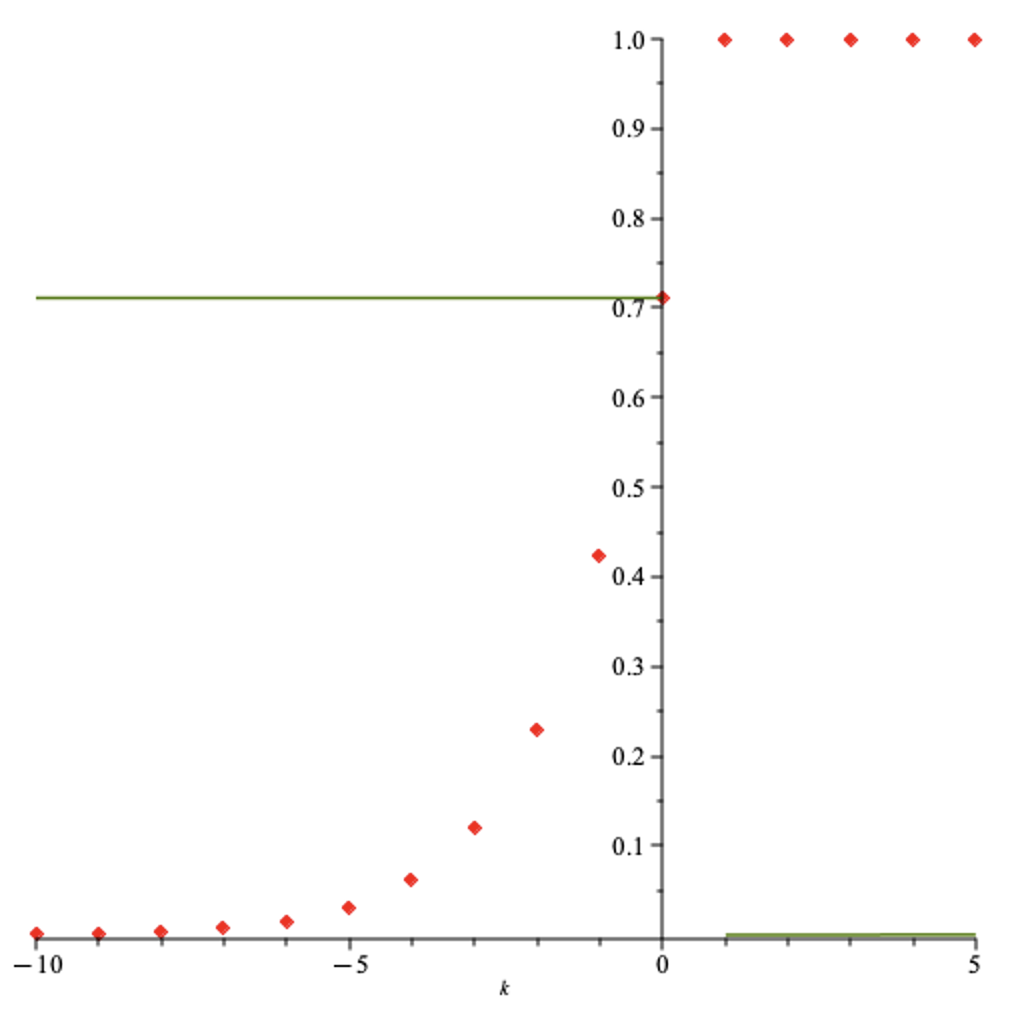}
\end{minipage}

Our second result improving over Altschuler and Yang's is a hitting time result of $\tau_{\Nminor}$ when the rank and the co-rank of $N$ are fixed or slowly growing functions of $n$. Let $\tau_{\crkc}$ be the smallest $m$ such that the co-rank of $A_m$ is equal to $c$. This is well defined as the co-rank of $A_m$ is non-decreasing as $m$ grows.

\begin{thm}\label{very small minor}
    Suppose that $N$ is an $\FF_q$-representable non-free matroid such that $rq^{rc}=o(n)$, where $r$ and $c$ denote the rank and the co-rank of $N$ respectively.
    Then a.a.s. $\tau_{\Nminor}=\tau_{\crkc}$.
\end{thm}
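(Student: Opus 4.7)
The direction $\tau_{\crkc}\le\tau_{\Nminor}$ is immediate: taking a matroid minor cannot increase the co-rank, so if $N$ is a minor of $M[A_m]$, then $\corank(M[A_m])\ge\corank(N)=c$, which forces $\tau_{\crkc}\le m$. The plan for the reverse direction is to show that a.a.s.\ $N$ is already a minor of $M[A_\tau]$ at time $\tau:=\tau_{\crkc}$.

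The first step is an explicit description of the conditional distribution of $M[A_\tau]$ given the times $1\le i_1<i_2<\cdots<i_c=\tau$ at which the co-rank jumped. Setting $I:=[\tau]\setminus\{i_1,\ldots,i_c\}$, the Markov property of the process implies that, conditionally on $(i_s)_{s=1}^c$, the columns $(v_j)_{j\in I}$ form a uniformly random linearly-independent family in $\FF_q^n$, while each $v_{i_s}$ is uniform in the span of $\{v_j:j\in I,\,j<i_s\}$. Writing $v_{i_s}=\sum_{j\in I,\,j<i_s}\beta_{s,j}v_j$ exhibits the coefficients $\beta_{s,j}$ as mutually independent and uniform in $\FF_q$. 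A row reduction then yields $M[A_\tau]\cong M([\,I_{\tau-c}\mid B\,])$, where $B$ is the $(\tau-c)\times c$ matrix (with rows indexed by $I$) such that $B_{j,s}=\beta_{s,j}$ if $j<i_s$ and $B_{j,s}=0$ otherwise. In particular, the rows of $B$ indexed by $L_0:=\{j\in I:j<i_1\}$ have no forced zeros and are i.i.d.\ uniform in $\FF_q^c$.

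The second step controls $|L_0|$. A tail estimate on the first-dependency time (similar in spirit to the one used in Lemma~\ref{full row rank}) gives $i_1=n+O_p(1)$, and once the first dependency has occurred the rank stays within $O_p(1)$ of $n$, so each subsequent column is dependent with probability at least $1/q-o(1)$; this yields $\tau-i_1=O_p(cq)$. In particular $|L_0|=i_1-1=n(1-o(1))$ a.a.s. For the third step, fix a representation $N\cong M([\,I_r\mid C\,])$ with $C\in\FF_q^{r\times c}$. The key observation is that if $r$ distinct rows $k_1,\ldots,k_r\in L_0$ of $B$ satisfy $B_{k_i,\cdot}=C_{i,\cdot}$ for every $i$, then contracting in $M[A_\tau]$ the $\tau-c-r$ independent elements corresponding to rows of $I_{\tau-c}$ outside $\{k_1,\ldots,k_r\}$ produces the matroid $M([\,I_r\mid C\,])\cong N$ exactly. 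The number $X$ of such ordered $r$-tuples is a sum of indicator variables over $\Theta(|L_0|^r)$ choices, each of probability $q^{-rc}$, so $\mathbb{E}[X]=(1-o(1))\,n^r/q^{rc}$, which tends to infinity under the hypothesis. Because the rows of $B|_{L_0}$ are mutually independent, a standard second-moment calculation yields $\mathrm{Var}(X)/\mathbb{E}[X]^2=O(r^2q^c/n)$, and the hypothesis $rq^{rc}=o(n)$ forces $r^2q^c=o(n)$, so Chebyshev's inequality gives $X\ge 1$ a.a.s.

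The main obstacle is the careful justification of the conditional distribution of the $\beta_{s,j}$ in the first step, which entails tracking the joint law of the process together with the identification of the dependency positions $(i_s)$. Once that structural description is secured, the tail estimates controlling the clustering of dependency times and the first- and second-moment computations that locate the minor are essentially routine.
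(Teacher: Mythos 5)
Your proposal is correct and follows essentially the same route as the paper: condition on the dependent/independent (red/blue) pattern up to time $\tau_{\crkc}$, change basis so the matroid becomes $M([I\mid B])$ with the rows of $B$ indexed by columns arriving before the first dependency being i.i.d.\ uniform in $\FF_q^c$, and realise $N\cong M([I_r\mid C])$ as a minor by contracting the unused identity columns and matching $r$ rows of $B$ to the rows of $C$. The only real difference is the final counting step: the paper partitions the first $n/2$ such rows into $\lfloor n/2r\rfloor$ disjoint blocks of $r$ rows and uses independence of the blocks to get probability at least $1-(1-q^{-rc})^{n/2r}\to 1$ directly, whereas you run a second-moment argument over all ordered $r$-tuples of rows (your variance bound and the implication $r^{2}q^{c}=o(n)$ from $rq^{rc}=o(n)$ do check out), which is slightly more work but equally valid.
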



As a direct corollary, we prove that a.a.s.\ an $N$-minor is formed by step $n+\corank(N)$, improving Theorem~\ref{Altschuler and Yang}(b). On the other hand, there is a non-vanishing probability that the first $N$-minor is created precisely during the step $n+\corank(N)$.  
\begin{cor}\label{thm:one_step}
Suppose that $N$ is a fixed non-free $\FF_q$-representable matroid. Let $c$ be the co-rank of $N$. Then, a.a.s.\ $\tau_{\Nminor}=n+O_p(1)$. Moreover, letting $\gamma_{q,c}=\prod_{j=c}^{\infty} (1-q^{-j})$, we have
\[
\lim_{n\to\infty}\pr(\tau_{\Nminor}\le n+c)=1;\quad \lim_{n\to\infty} \pr(\tau_{\Nminor}\le n+c-1)= 1-\gamma_{q,c},
\] 
for every $(q,c)$. Moreover,
\begin{align*}
\lim_{q\to \infty} \gamma_{q,c} =1 \quad \mbox{for every $c$};\quad
\lim_{c\to \infty} \gamma_{q,c} =1 \quad\mbox{for every $q$}.
\end{align*}
\end{cor}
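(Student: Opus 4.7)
The plan is to deduce everything from Theorem~\ref{very small minor} together with the classical count of full-rank random matrices over $\FF_q$. Because $N$ is fixed, its rank $r$ and corank $c$ are constants, so $rq^{rc}=O(1)=o(n)$, and Theorem~\ref{very small minor} then yields $\tau_{\Nminor}=\tau_{\crkc}$ a.a.s. Every claim about $\tau_{\Nminor}$ in the corollary thus reduces to the corresponding claim about $\tau_{\crkc}$.

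For $\pr(\tau_{\Nminor}\le n+c)\to 1$, I would note that $\rank(A_{n+c})\le n$ deterministically, so $\corank(A_{n+c})=(n+c)-\rank(A_{n+c})\ge c$ and $\tau_{\crkc}\le n+c$ holds with probability $1$. For $\pr(\tau_{\Nminor}\le n+c-1)\to 1-\gamma_{q,c}$, observe that $\tau_{\crkc}>n+c-1$ is equivalent to $\rank(A_{n+c-1})=n$. Since the entries of $A_m$ are i.i.d.\ uniform in $\FF_q$, the $n$ rows are i.i.d.\ uniform in $\FF_q^m$, and the standard count of ordered linearly independent $n$-tuples in $\FF_q^m$ gives
\[
\pr(\rank(A_m)=n) \;=\; \prod_{i=0}^{n-1}(1-q^{i-m}) \;=\; \prod_{j=m-n+1}^{m}(1-q^{-j})
\]
for $m\ge n$. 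Setting $m=n+c-1$ and letting $n\to\infty$ yields $\prod_{j=c}^{n+c-1}(1-q^{-j})\to\gamma_{q,c}$, which delivers the claimed limit.

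For $\tau_{\Nminor}=n+O_p(1)$, the upper tail is immediate, since a.a.s.\ $\tau_{\Nminor}-n=\tau_{\crkc}-n\le c$. For the lower tail I would show that for every $\eps>0$ there exists $K$ with $\pr(\tau_{\crkc}<n-K)<\eps$ for all large $n$: the event $\{\tau_{\crkc}\le n-K\}$ forces $\corank(A_{n-K})\ge c\ge 1$, while the same rank formula gives
\[
\pr(\rank(A_{n-K})=n-K)=\prod_{j=K+1}^{n}(1-q^{-j})\to\gamma_{q,K+1},
\]
which tends to $1$ as $K\to\infty$. Finally, the two limits of $\gamma_{q,c}$ itself follow from the one-line estimate $-\log(1-x)\le 2x$ valid for $x\le \tfrac12$: since $q^{-c}\le\tfrac12$ for all $q\ge 2$ and $c\ge 1$,
\[
-\log\gamma_{q,c}\;\le\;2\sum_{j=c}^{\infty}q^{-j}\;=\;\frac{2q^{-c}}{1-q^{-1}},
\]
which vanishes as $q\to\infty$ with $c$ fixed and as $c\to\infty$ with $q$ fixed. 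The only real bookkeeping is lining up the index shifts in the two applications of the rank formula; there is no substantive obstacle beyond Theorem~\ref{very small minor}.
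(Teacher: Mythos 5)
Your proposal is correct, and it reaches the corollary by a somewhat different route than the paper. The paper's own proof is a two-line deduction from Theorem~\ref{thm:point-prob}: the bound $\pr(\tau_{\Nminor}\le n+c)\to 1$ comes from $C_{c,k}=0$ for $k>c$, and the second limit comes from identifying the constant $C_{c,c}$ in that theorem with $\gamma_{q,c}=\prod_{j=c}^{\infty}(1-q^{-j})$, so the work is really in the generating-function computation already done for Theorem~\ref{thm:point-prob}. You instead bypass Theorem~\ref{thm:point-prob} entirely: you invoke Theorem~\ref{very small minor} to replace $\tau_{\Nminor}$ by $\tau_{\crkc}$ (legitimate, since $r$ and $c$ are constants so $rq^{rc}=o(n)$), note the deterministic bound $\tau_{\crkc}\le n+c$, and reduce $\pr(\tau_{\crkc}>n+c-1)$ to the full-row-rank probability $\prod_{i=0}^{n-1}(1-q^{i-m})$ at $m=n+c-1$, whose limit is exactly $\gamma_{q,c}$; the index bookkeeping checks out, and the transposed version of Lemma~\ref{full row rank} you use is valid because the entries are i.i.d.\ uniform. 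What your route buys: it avoids having to extract $C_{c,c}=\gamma_{q,c}$ from the Euler-product formula of Theorem~\ref{thm:point-prob}, it makes the $n+O_p(1)$ claim explicit via a clean two-sided tightness argument (the paper leaves this essentially implicit, resting on Theorem~\ref{Altschuler and Yang}(b) or on the pointwise limits), and it proves the two limits of $\gamma_{q,c}$, which the paper's displayed proof does not address. What it costs is only that it is slightly longer and re-derives probabilities that the paper already has in hand once Theorem~\ref{thm:point-prob} is proved; note also that for your lower-tail bound you could even skip the a.a.s.\ identification and use the deterministic inequality $\tau_{\Nminor}\ge\tau_{\crkc}$ from Lemma~\ref{minor corank}.
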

The next corollary shows that $\tau_{\Nminor}$ has a 1-point concentration if $|N|$ is not too large, and $\corank(N)=\omega(1)$. Here, $|N|$ denotes the size of $N$, which is the number of elements in $N$.
\begin{cor}\label{cor:minor} Suppose that $N$ is an $\FF_q$-representable non-free matroid. If $\corank(N)=\omega(1)$ and $|N|\le (2-\eps)\sqrt{\log_q n}$ for some fixed $\varepsilon>0$, then a.a.s.
$\tau_{\Nminor}=n+\corank(N)$.
\end{cor}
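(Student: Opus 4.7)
My plan is to apply Theorem~\ref{very small minor} to reduce the problem to analysing the hitting time $\tau_{\crkc}$, and then to show $\tau_{\crkc}=n+c$ a.a.s.\ when $c=\omega(1)$.

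First I verify the hypothesis $rq^{rc}=o(n)$ of Theorem~\ref{very small minor}. Setting $s=|N|=r+c\le (2-\eps)\sqrt{\log_q n}$, the AM--GM inequality gives
\[
rc\le \frac{s^2}{4}\le \Bigl(1-\eps+\frac{\eps^2}{4}\Bigr)\log_q n,
\]
hence $q^{rc}\le n^{1-\eps+\eps^2/4}$. Since $r\le s$, it follows that
\[
rq^{rc}\le (2-\eps)\sqrt{\log_q n}\cdot n^{1-\eps+\eps^2/4}=o(n),
\]
using $-\eps+\eps^2/4<0$ for $0<\eps<4$. Theorem~\ref{very small minor} then gives $\tau_{\Nminor}=\tau_{\crkc}$ a.a.s.

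It remains to show $\tau_{\crkc}=n+c$ a.a.s. The inequality $\tau_{\crkc}\le n+c$ holds deterministically, since $\rank(A_{n+c})\le n$ forces $\corank(A_{n+c})\ge c$. For the matching lower bound, let $\tau_n$ denote the first step at which $\rank(A_m)=n$. Once the rank saturates, $\corank(A_m)=m-n$ for all $m\ge \tau_n$, so whenever $\tau_n\le n+c-1$ one has $\corank(A_{n+c-1})=c-1<c$ and hence $\tau_{\crkc}\ge n+c$. To control $\tau_n$, I observe that while $\rank(A_m)=r<n$, each new column lies in the current span with probability $q^{r-n}$, independently of the past, so the number of columns added in the phase when the rank equals $r$ is geometric with success probability $1-q^{r-n}$. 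Therefore
\[
\ex[\tau_n-n]=\sum_{r=0}^{n-1}\frac{q^{r-n}}{1-q^{r-n}}=\sum_{k=1}^{n}\frac{q^{-k}}{1-q^{-k}}=O(1),
\]
and Markov's inequality yields $\pr(\tau_n-n\ge c)=O(1/c)=o(1)$ since $c=\omega(1)$. Combining everything, $\tau_{\Nminor}=\tau_{\crkc}=n+c$ a.a.s.

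The principal obstacle is packaged inside Theorem~\ref{very small minor}; given that result, only the two short estimates above remain. Absent that theorem, the difficult step would be ruling out the premature appearance of an $N$-minor before $\corank(A_m)$ reaches $c$, which is precisely what the condition $rq^{rc}=o(n)$ is designed to control.
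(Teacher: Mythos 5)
Your proposal is correct and follows essentially the same route as the paper: verify $rq^{rc}=o(n)$ via AM--GM, invoke Theorem~\ref{very small minor} to reduce to $\tau_{\crkc}$, and then show the rank saturates at $n$ before the co-rank can reach $c=\omega(1)$, forcing $\tau_{\crkc}=n+c$. The only cosmetic difference is that you establish the rank-saturation step by a direct geometric-waiting-time expectation and Markov's inequality, where the paper cites Lemma~\ref{full row rank}; the underlying fact is the same.
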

Note that the matroid $N$ in Theorems~\ref{Altschuler and Yang},~\ref{thm:point-prob},~\ref{very small minor} and Corollaries~\ref{thm:one_step} and~\ref{cor:minor} is general, and does not need to be simple. In the next corollary of Theorem~\ref{very small minor}, we generalise Theorem~\ref{Altschuler and Yang} and study $\tau_{\PGr}$, the minimum integer $m$ such that $M[A_m]$ contains the complete projective geometry $PG(r-1,q)$ as a minor. All the logarithms in the paper are natural logarithms with base $e$, unless otherwise with a specified base.

\begin{thm} \label{thm:PGr} Suppose that $r=\omega(1)$. Let $\zeta=[r]_q$.
\begin{enumerate}[(a)] 
\item A.a.s.\ $\tau_{\PGr}\le n+\zeta\log \zeta +\omega(\zeta) $.
\item If $r=\omega(\log n)$ then a.a.s.\ $\tau_{\PGr}\sim \zeta\log \zeta$.
\item If $N$ is an $\FF_q$-representable simple non-free matroid with rank $r\le \alpha \log n$ for any fixed $\alpha<1/\log q$ then a.a.s.\ $\tau_{\Nminor}\sim n$.
\end{enumerate}
\end{thm}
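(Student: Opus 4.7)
For part (a), the plan is to split the matrix into a contraction block and a coupon-collector block. Reveal $v_1,\dots,v_{n-r}$ first: since $r=\omega(1)$, these are linearly independent with probability $\prod_{j=r+1}^{n}(1-q^{-j})\ge\prod_{j=r+1}^{\infty}(1-q^{-j})=1-O(q^{-r})=1-o(1)$. Let $W=\spn{v_1,\dots,v_{n-r}}$, a codimension-$r$ subspace of $\FF_q^n$. The remaining $m-(n-r)$ columns then project to $\FF_q^n/W\cong\FF_q^r$ independently and uniformly, so finding a $PG(r-1,q)$ minor reduces to a coupon-collector problem over the $\zeta=[r]_q$ non-zero one-dimensional subspaces of $\FF_q^r$, each drawn with probability $(q-1)/q^r$ per trial. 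The expected number of uncovered coupons after $\zeta\log\zeta+\omega(\zeta)$ draws is $\zeta(1-(q-1)/q^r)^{\zeta\log\zeta+\omega(\zeta)}\sim e^{-\omega(1)}=o(1)$, so by Markov's inequality a.a.s.\ all coupons are covered. Contracting $v_1,\dots,v_{n-r}$ and picking one column per covered direction then yields a $PG(r-1,q)$ minor, giving $\tau_{\PGr}\le(n-r)+\zeta\log\zeta+\omega(\zeta)\le n+\zeta\log\zeta+\omega(\zeta)$.

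For part (b), the upper bound is immediate from (a): since $r=\omega(\log n)$, the quantity $\zeta\ge q^{r-1}$ grows faster than every polynomial in $n$, so $n+\omega(\zeta)=o(\zeta\log\zeta)$. The matching lower bound $\tau_{\PGr}\ge(1-o(1))\zeta\log\zeta$ is the main technical hurdle. For $m=(1-\varepsilon)\zeta\log\zeta$ and any single codimension-$r$ subspace $W$, a coupon-collector Chebyshev bound (the expected number of uncovered directions of $\FF_q^n/W$ is $\sim\zeta^\varepsilon$, with variance dominated by the mean via negative correlation between coupons) shows that covering all $\zeta$ directions has probability at most $\zeta^{-\Omega(\varepsilon)}$. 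The obstacle is controlling the union over $W$, since a naive bound over the $\gbinom{n}{r}_q$ codimension-$r$ subspaces is vacuous. A refined argument is required, which I would approach via a second-moment analysis on the number of minor-realizing triples $(T,S,W)$, exploiting the fact that in this regime a typical $W$ contains only $O(\log\zeta)$ columns, far fewer than the $n-r$ needed to span $W$; this drastically restricts the set of column-spanned $W$'s that can realize a minor, and combined with the per-$W$ coupon-collector bound should yield the desired lower bound.

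For part (c), since $N$ is simple and $\FF_q$-representable of rank $r$, it embeds as a restriction of $PG(r-1,q)$, whence $\tau_{\Nminor}\le\tau_{\PGr}$. The hypothesis $r\le\alpha\log n$ with $\alpha<1/\log q$ gives $\zeta=[r]_q\le q^r\le n^{\alpha\log q}=n^{1-\delta}$ for some $\delta>0$, so $\zeta\log\zeta=o(n)$. If $r=\omega(1)$, part (a) gives $\tau_{\PGr}\le n+o(n)$; if $r=O(1)$, then $N$ is one of finitely many isomorphism classes and Theorem~\ref{Altschuler and Yang}(b) yields $\tau_{\Nminor}=n+O_p(1)$. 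Either way $\tau_{\Nminor}\le(1+o(1))n$. For the matching lower bound, for any $m\le n-\omega(1)$ the vectors $v_1,\dots,v_m$ are linearly independent with probability $\prod_{i=0}^{m-1}(1-q^{i-n})\ge 1-q^{-\omega(1)}/(q-1)=1-o(1)$, so $M[A_m]$ is a.a.s.\ free and has no non-free minor; hence $\tau_{\Nminor}\ge n-\omega(1)=(1-o(1))n$.
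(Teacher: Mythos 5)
Parts (a) and (c) of your proposal are correct and essentially the paper's own argument: contract an a.a.s.\ independent set of $n-r$ columns (the paper instead waits for rank $n$ and changes basis, but the effect is the same), reduce to a coupon collector over the $\zeta$ points of $PG(r-1,q)$ in the rank-$r$ quotient, and for the lower bound in (c) use that $M[A_m]$ is a.a.s.\ free while $m-n\to-\infty$. The upper bound in (b) via (a) is also fine.

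The genuine gap is the lower bound in (b), which you explicitly leave as a plan (``should yield the desired lower bound'') rather than a proof; and the obstacle you cite is an artifact of the estimate you chose. Your per-$W$ bound via Chebyshev/negative correlation only gives $\pr(\text{all }\zeta\text{ directions covered})\le\zeta^{-\Omega(\eps)}$, which indeed cannot survive a union over exponentially many subspaces or column subsets. But the coverage probability is in fact \emph{exponentially} small: by Poissonization (Lemma~\ref{Poisson}), throwing $b\le(1-\eps)\zeta\log\zeta$ uniform balls into $\zeta$ bins covers every bin with probability at most $2e^{-\zeta e^{-b/\zeta}}\le 2\exp(-\zeta^{\eps})$. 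With this bound the ``naive'' first moment goes through: since $\rank(A_m)=n$ a.a.s., a $PG(r-1,q)$-minor forces an independent set $J$ of $n-r$ columns whose contraction leaves the images of the remaining at most $(1-\eps)\zeta\log\zeta$ columns (uniform in $\FF_q^r$, independently of the conditioning on $J$) covering all $\zeta$ directions; summing $2\exp(-\zeta^{\eps})$ over the at most $\binom{m}{n-r}\le\exp(O(nr))$ choices of $J$ (or even over all $\gbinom{n}{r}_q\le q^{rn}$ codimension-$r$ subspaces) tends to $0$ precisely because $r=\omega(\log n)$ makes $\zeta^{\eps}=q^{\Omega(r)}=n^{\omega(1)}\gg nr$. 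This is exactly how the paper argues, and it makes your proposed second-moment analysis over ``minor-realizing triples'' unnecessary; as written, that part of your argument is not a proof, and the heuristic that a typical $W$ contains only $O(\log\zeta)$ columns is never turned into a usable bound.
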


\begin{remark}
Theorem~\ref{thm:PGr} determines the asymptotic value of $\tau_{PGr}$ provided that $r\le (1-\eps)\log_q n$ for some fixed $\eps>0$, or $r=\omega(\log n)$. However, we did not manage to prove a lower bound for $\tau_{PGr}$ that matches its upper bound in Theorem~\ref{thm:PGr}(a) for $\log_q n\le r=\Theta(\log n)$. There is a trivial lower bound $n+\zeta-r=n+(1+o(1))\zeta$, since the co-rank of $A_m$ must be at least the co-rank of $PG(r-1,q)$ (see Lemma~\ref{minor corank} below). It is possible that in the range $\log_q n\le r=\Theta(\log n)$, both the upper bound (Theorem~\ref{thm:PGr}(a)) and the lower bound ($n+(1+o(1))\zeta$) are not tight.
\end{remark}

The proofs of Theorems~\ref{thm:point-prob},~\ref{very small minor} and~\ref{thm:PGr}, and the proofs of Corollaries~\ref{thm:one_step} and~\ref{cor:minor} will be presented in Section~\ref{sec:minor}.

\subsection{Circuits}

The circuits in a matroid (see its formal definition in Section~\ref{sec:preliminary}) are minimal dependent subsets and are analogs of cycles in a graph.   The appearance of the circuits with constant length has been studied by Kelly and Oxley~\cite{kelly1984random}, whereas the longer circuits were investigated by Kordecki and {\L}uczak~\cite{kordecki1999connectivity}. We collect and restate their results in the following theorem. 

\begin{thm}\label{thm:old}
\begin{enumerate}[(a)]
\item (Theorem 5.1 of~\cite{kelly1984random})
    Suppose that $k=o(m)$ and that $m^kq^{-n}$ is bounded.
    Then $$\Pr(M[A_m]\text{has no $k$-circuits})\sim\exp\left(-\frac{(q-1)^{k-1}}{k!}\frac{m^k}{q^n}\right).$$
    \item (Theorem 6 of~\cite{kordecki1999connectivity})     For each $1\le k\le n+1$, let
    $\mu_k=\binom{m}{k}(q-1)^kq^{-n}.$
    \begin{enumerate}[(i)]
        \item If $\mu_k\to0$, then a.a.s.\ $M[A_{m}]$ does not contain a $k$-circuit.
        \item If $\mu_k\to\infty$ and either $n-k\to\infty$ or $k(m-k)/m\to\infty$, then a.a.s.\ $M[A_{m}]$ contains a $k$-circuit. 
    \end{enumerate}
    \end{enumerate}
\end{thm}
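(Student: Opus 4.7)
Plan. The proof of Theorem~\ref{thm:old} proceeds via the method of moments applied to $X$, the number of $k$-circuits of $M[A_m]$. First I compute $\ex[X]$. For a fixed $k$-subset $S=\{i_1,\ldots,i_k\}$ of column indices, the event that $S$ is a circuit is equivalent to $v_{i_1},\ldots,v_{i_k}$ all lying in a common $(k-1)$-dimensional subspace $U$ of $\FF_q^n$ with every proper $(k-1)$-subset of the vectors forming a basis of $U$. Enumerating the choice of $U$ via the Gaussian binomial $\gbinom{n}{k-1}_q$, an ordered basis of $U$, and the coefficients of the last vector (which must be a combination of the others with every coefficient nonzero), I obtain
$$p_k := \pr(S\text{ is a $k$-circuit}) = \frac{(q-1)^{k-1}q^{\binom{k-1}{2}}\prod_{i=0}^{k-2}(q^{n-i}-1)}{q^{nk}} \sim (q-1)^{k-1}q^{-n},$$
and hence $\ex[X] = \binom{m}{k} p_k$, matching the $\mu_k$ of Theorem~\ref{thm:old}(b) up to the power of $q-1$.

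For part (a) I use the method of factorial moments. Writing $(X)_j := X(X-1)\cdots(X-j+1)$, I decompose $\ex[(X)_j] = \sum \pr(S_1,\ldots,S_j\text{ are all circuits})$ over ordered $j$-tuples of distinct $k$-subsets, and split according to whether the $S_i$ are pairwise disjoint. Under $k=o(m)$, almost all such tuples are pairwise disjoint, so the joint probability factorises as $p_k^j$, yielding $\bigl(\binom{m}{k}p_k\bigr)^j(1+o(1)) \to \lambda^j$ with $\lambda = \lim \ex[X]$. The remaining tuples are classified by intersection type and shown to contribute $o(1)$ using $k=o(m)$ together with $m^kq^{-n}=O(1)$. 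The moment characterisation of the Poisson distribution then gives $X \Rightarrow \Po(\lambda)$, and in particular $\pr(X=0) \to e^{-\lambda}$, as required.

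Part (b)(i) is immediate from Markov's inequality: $\pr(X\ge 1) \le \ex[X] = O(\mu_k) \to 0$. For part (b)(ii) I apply the second moment method via $\pr(X=0) \le \var(X)/(\ex[X])^2$. Splitting $\ex[X^2]$ by overlap size $j=|S_1\cap S_2|$, the $j=0$ term contributes $(\ex[X])^2(1+o(1))$ and the $j=k$ term contributes $\ex[X]$. Each intermediate term is estimated by conditioning on the rank of $\spn{v_i:i\in S_1\cup S_2}$ (which is at most $2(k-1)-j$) and enumerating configurations via Gaussian binomials. The two alternative side hypotheses enter here: $n-k\to\infty$ gives enough room in $\FF_q^n$ that two overlapping large circuits are not forced to share most of the ambient space (controlling the regime $k\sim n$), while $k(m-k)/m\to\infty$ tames the combinatorial factor $\binom{m-k}{k-j}$ relative to $\binom{m}{k}$, forcing the hypergeometric-type weight to concentrate near $j=0$ (controlling the regime $k\sim m$). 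Combining the estimates yields $\var(X)=o((\ex[X])^2)$ and hence $\pr(X\ge 1)\to 1$.

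The main obstacle is the second-moment bookkeeping in part (b)(ii): the joint probability of two overlapping $k$-circuits does not factor cleanly, and the two alternative side conditions are precisely what is needed to cover the two boundary regimes $k\sim n$ and $k\sim m$; verifying that in each such regime the intermediate-$j$ contributions are dominated by the $j=0$ term is the genuinely delicate part. The factorial-moment calculation for part (a) is structurally similar but substantially easier, since the combined assumptions $k=o(m)$ and $\ex[X]=O(1)$ kill all overlap contributions simultaneously.
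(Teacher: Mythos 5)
You should first note that the paper itself contains no proof of Theorem~\ref{thm:old}: part (a) is quoted verbatim from Kelly and Oxley \cite{kelly1984random} and part (b) from Kordecki and {\L}uczak \cite{kordecki1999connectivity}, with the subsequent remark transferring the latter from $PG(n-1,q;m)$ to $M[A_m]$ via Proposition~\ref{p:equivalence}. So there is no in-paper argument to match; what you propose is essentially a reconstruction of the original method-of-moments proofs, carried out directly in the $M[A_m]$ model (which has the mild advantage of avoiding the model-transfer step). Your first-moment computation is correct: $p_k=(q-1)^{k-1}q^{-n}\prod_{i=0}^{k-2}(1-q^{i-n})$, so $\ex X=\binom{m}{k}p_k\le \mu_k$ and Markov gives (b)(i) at once; and factorial moments/Poisson convergence is indeed the right mechanism for (a). Two small caveats there: $p_k\sim(q-1)^{k-1}q^{-n}$ only when $n-k\to\infty$ (for $k$ within $O(1)$ of $n$ it is only correct up to a constant factor), and replacing $\binom{m}{k}$ by $m^k/k!$ in the Poisson mean needs $k^2=o(m)$, not just $k=o(m)$; the regime $k\to\infty$ with $m^kq^{-n}$ bounded forces the mean to $0$ and is better handled by a bare first-moment bound, so these cases should be separated explicitly.

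The genuine gap is part (b)(ii), which is exactly where the content of \cite{kordecki1999connectivity} lies. You correctly identify the second moment over overlap sizes as the crux and you guess plausibly at the roles of the two alternative hypotheses, but no estimate is actually carried out: for each $1\le j\le k-1$ one needs a uniform upper bound on $\pr(S_1,S_2\ \text{both circuits})$ when $|S_1\cap S_2|=j$ (e.g.\ via the observation that the $2k-j$ columns of $S_1\cup S_2$ then have rank at most $2k-j-2$), and then one must show that $\sum_{j=1}^{k-1}\binom{m}{k}\binom{k}{j}\binom{m-k}{k-j}\,\pr(S_1,S_2\ \text{both circuits})=o\bigl((\binom{m}{k}p_k)^2\bigr)$, uniformly in the regimes where only one of the side conditions $n-k\to\infty$, $k(m-k)/m\to\infty$ holds (Hamiltonian-type circuits with $n-k=O(1)$, and bounded $k$ or $m-k$, are precisely the delicate boundary cases). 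Asserting that ``combining the estimates yields $\var(X)=o((\ex X)^2)$'' is a plan, not a proof; as written the hardest third of the theorem is unproved. Also, minor but worth fixing: for disjoint $S_1,S_2$ the contribution is at most $(\ex X)^2$ but is $(1+o(1))(\ex X)^2$ only when $k^2=o(m)$ --- harmless for Chebyshev, yet the claim as stated is inaccurate. If your aim is to serve the paper, the honest alternatives are to cite the two sources as the authors do, or to complete the overlap bookkeeping in those boundary regimes.
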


\begin{remark} Kordecki and {\L}uczak's result~\cite{kordecki1999connectivity} was proved for $PG(n-1,q;m)$. The same holds for $M[A_m]$ by Proposition~\ref{p:equivalence}. The original statement of~\cite[Theorem 6]{kordecki1999connectivity}
was slightly stronger, by considering the appearance of circuits whose lengths lie in a specified interval. For simplicity we stated a simplified version. The value $\mu_k$ is the asymptotic number of circuits with length $k$ in $PG(n-1,q;m)$ and $M[A_m]$. 
\end{remark}

Theorem~\ref{thm:old} provides the full information on the types of circuits that are likely or unlikely to appear in $M[A_m]$ for every $m$. We give a few interesting corollaries of Theorem~\ref{thm:old} in terms of $\tau_{\kCircuit}$, the minimum integer $m$ such that $M[A_m]$ has a circuit with length $k$.  Given $0<a\le 1$, define 
\begin{equation}
    g_a(y)=y\log y+a\log(q-1)-a\log a-(y-a)\log(y-a)-\log q,\quad \text{for}\ y\ge a, \label{eq:g}
    \end{equation}
    where $0\log 0$ is defined to be 0 so that $g_a(y)$ is continuous at $y=a$. 
\begin{cor} \label{cor:circuits}
\begin{enumerate}[(a)]
\item Let $k$ be a fixed positive integer. Then, $\tau_{\kCircuit}=\Theta_p(q^{n/k})$. 
\item If $k\to\infty$ and $k=o(n)$ then a.a.s.\ $\tau_{\kCircuit}\sim\frac{1}{e(q-1)}kq^{n/k}$.
\item If $k\sim an$ for some fixed $0<a\le 1$ then $g_a(y)$ has a unique root $b$, and a.a.s.\ $\tau_{\kCircuit}\sim bn$.
\end{enumerate}
\end{cor}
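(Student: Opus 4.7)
The plan is to derive all three parts as direct consequences of Theorem~\ref{thm:old}, reducing each to computing the asymptotics of $\mu_k=\binom{m}{k}(q-1)^kq^{-n}$ at an appropriate scaling of $m$ in terms of $k$ and $n$. For part (a), with $k$ fixed, Theorem~\ref{thm:old}(a) applies directly. Setting $m=\lambda q^{n/k}$ and using $\binom{m}{k}\sim m^k/k!$ gives
\[
\pr(M[A_m]\text{ has no $k$-circuit})\to \exp\!\left(-\tfrac{(q-1)^{k-1}}{k!}\lambda^k\right),
\]
which lies strictly between $0$ and $1$ for every $\lambda>0$ and sweeps out all of $(0,1)$ as $\lambda$ ranges in $(0,\infty)$; hence $\tau_{\kCircuit}=\Theta_p(q^{n/k})$.

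For part (b), with $k\to\infty$ and $k=o(n)$, I plug $m=\tfrac{\lambda k}{e(q-1)}q^{n/k}$ into $\mu_k$. Because $q^{n/k}\to\infty$, we have $k=o(m)$, so $\binom{m}{k}\sim m^k/k!$; combining this with Stirling's formula $k!\sim (k/e)^k\sqrt{2\pi k}$ collapses $\mu_k$ to $\lambda^k/\sqrt{2\pi k}\,(1+o(1))$. This tends to $0$ for $\lambda<1$ and to $\infty$ for $\lambda>1$; the side condition $n-k\to\infty$ of Theorem~\ref{thm:old}(b)(ii) holds since $k=o(n)$, so the threshold is $\tau_{\kCircuit}\sim \tfrac{1}{e(q-1)}k q^{n/k}$ a.a.s.

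For part (c), I set $m=yn$ and $k=an$ with $0<a\le 1$, and use the entropy approximation $\log\binom{yn}{an}\sim yn\,H(a/y)$ (binary entropy) to obtain $\log\mu_k\sim n\cdot g_a(y)$, with $g_a$ matching the definition in~\eqref{eq:g} after elementary simplification. Existence and uniqueness of a positive root $b$ of $g_a$ on $(a,\infty)$ then follow from a short monotonicity check: $g_a'(y)=\log(y/(y-a))>0$, the one-sided limit $g_a(a^+)=a\log(q-1)-\log q<0$ (valid for all $q\ge 2$ and $a\le 1$, since $\log q/\log(q-1)>1\ge a$), and $g_a(y)\to\infty$ as $y\to\infty$ (because $y\log y-(y-a)\log(y-a)=a\log y+O(1)$). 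Thus $\mu_k\to 0$ whenever $y<b$ and $\mu_k\to\infty$ whenever $y>b$; for the second range the side condition in Theorem~\ref{thm:old}(b)(ii) holds as well, namely $n-k\sim(1-a)n\to\infty$ when $a<1$, and $k(m-k)/m\sim a(b-a)n/b\to\infty$ in the edge case $a=1$ (noting $b>1$). The one care point is precisely this edge case $a=1$, where $n-k$ need not diverge and one must fall back on the alternative side hypothesis of Theorem~\ref{thm:old}(b)(ii); everything else is routine asymptotic bookkeeping.
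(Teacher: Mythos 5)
Your proposal is correct and follows essentially the same route as the paper: each part is reduced to the asymptotics of $\mu_k$ at the appropriate scaling of $m$ and then Theorem~\ref{thm:old} is applied, with the same monotonicity check ($g_a'(y)=\log\frac{y}{y-a}>0$, $g_a(a)<0$, $g_a(y)\to\infty$) giving the unique root $b$ in part (c) — where you are in fact a bit more careful than the paper about the side hypotheses of Theorem~\ref{thm:old}(b)(ii), including the edge case $a=1$. One cosmetic point in part (b): $k=o(m)$ only yields $\log\binom{m}{k}=k\log\left(\frac{em}{k}\right)+o(k)$, not $\binom{m}{k}\sim m^k/k!$ (the latter needs $k^2=o(m)$, which can fail for $k$ growing quickly), but since the argument only requires $\log\mu_k=k\log\lambda+o(k)$, your conclusion is unaffected.
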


\begin{remark}\label{remark:g}
    For $0<a\le 1$, let $b=b(a)$ be the unique root of $g_a(y)$. 
    Then $b$ is strictly convex and has a minimum value of $1$, achieved at $a^*:=\frac{q-1}{q}$.
    Moreover, $b(1)<2$ and $b(a)\to\infty$ as $a\to 0$. 
\end{remark}

\begin{figure}[h]
\centering
\begin{minipage}{.5\textwidth}
  \centering
  \includegraphics[width=.6\linewidth]{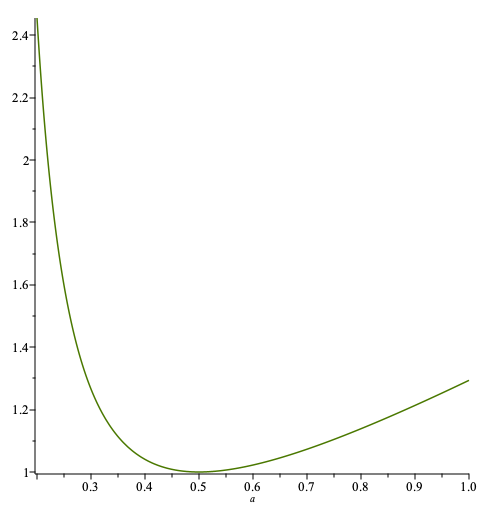}
  \captionof{figure}{Plot of $b(a)$}
  \label{fig:b}
\end{minipage}%
\begin{minipage}{.5\textwidth}
  \centering
  \includegraphics[width=.65\linewidth]{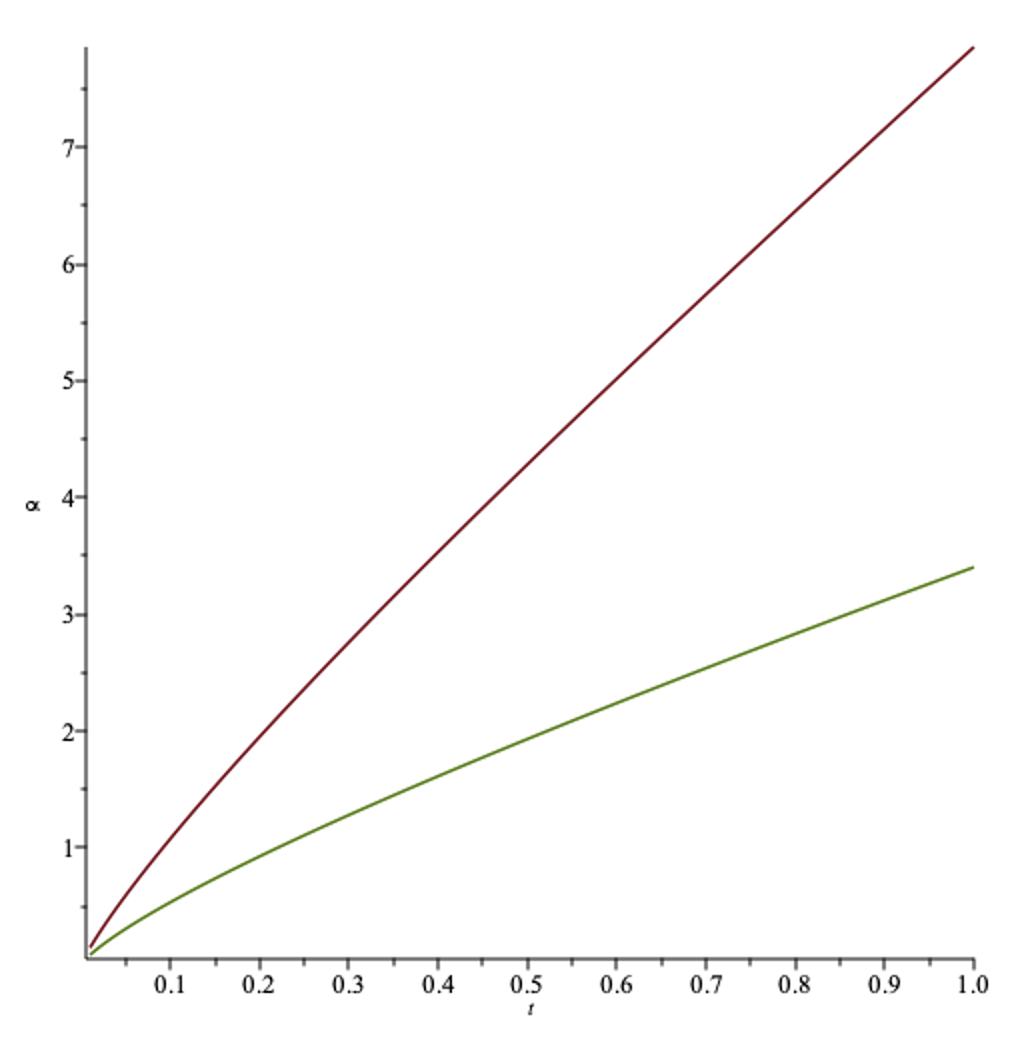}
  \captionof{figure}{Upper bound of $\alpha$ from Kelly and Oxley and our lower bound}
  \label{fig:bounds}
\end{minipage}
\end{figure}

Remark~\ref{remark:g} follows from simple calculus and we include its proof in the Appendix for interested readers. A plot of $b(a)$ for $q=2$ is given below in Figure~\ref{fig:b}.  Following Remark~\ref{remark:g} we immediately obtain the following interesting corollary about the length of the first appearing circuit and the appearing time of the  first Hamilton circuit (i.e.\ the circuit with length $n$).

\begin{cor}\label{cor2:circuits}
\begin{enumerate}[(a)]
\item The first circuit appearing in $M[A_m]$ has length asymptotic to $(1-q^{-1}) n$;
\item The first Hamilton circuit  appears in less than $2n$ steps for every finite field $\FF_q$.
 \end{enumerate} 
\end{cor}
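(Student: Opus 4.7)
Part (b) is almost free: by Corollary~\ref{cor:circuits}(c) with $a = 1$, the time at which the first Hamilton circuit appears is a.a.s.\ asymptotic to $b(1) n$, and Remark~\ref{remark:g} asserts $b(1) < 2$, so a.a.s.\ this time is below $2n$.

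For part (a), write $a^* = 1 - q^{-1}$ and let $L^*$ denote the length of the first circuit to appear. The plan is to show, for every fixed $\delta > 0$ and a suitably small $\epsilon = \epsilon(\delta) > 0$, that a.a.s.\ at time $m = (1+\epsilon) n$ the matroid $M[A_m]$ (i) already contains a circuit, and (ii) contains no circuit of length $k$ with $|k/n - a^*| > \delta$. Together these force $L^*/n \in [a^* - \delta, a^* + \delta]$, whence $L^*/n \to a^*$ in probability. Step (i) is immediate from Corollary~\ref{cor:circuits}(c) applied with $k = \lfloor a^* n \rfloor$, since Remark~\ref{remark:g} gives $b(a^*) = 1$. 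Step (ii) I plan to handle by a first moment calculation: the expected number of $k$-circuits in $M[A_m]$ is $\mu_k = \binom{m}{k}(q-1)^k q^{-n}$, and Stirling yields $\log \mu_k = n\, g_{k/n}(m/n) + O(\log n)$, with $g_a$ as in~\eqref{eq:g}.

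Remark~\ref{remark:g} states that $b$ is strictly convex on $(0,1]$ with unique minimum $b(a^*) = 1$ and $b(a) \to \infty$ as $a \to 0^+$, so there exists $\eta = \eta(\delta) > 0$ with $b(a) \ge 1 + \eta$ whenever $a \in (0,1]$ and $|a - a^*| > \delta$. Choosing $\epsilon \in (0, \eta)$, and noting that $g_a$ is strictly increasing in $y > a$ (since $g_a'(y) = \log(y/(y-a)) > 0$) with $g_a(b(a)) = 0$, one obtains $g_a(1+\epsilon) < 0$ across the bad range; a compactness argument promotes this to the uniform bound $g_a(1+\epsilon) \le -c$ for some $c = c(\delta, \epsilon) > 0$. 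Hence $\mu_k \le n^{O(1)} e^{-cn}$ for each such $k$, and summing over the at most $n+1$ bad values gives $\sum_{k:\,|k/n-a^*|>\delta} \mu_k \to 0$; Markov's inequality then completes Step (ii).

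The main obstacle I expect is the uniformity in this large-deviation step: one must convert the pointwise inequality $g_a(1+\epsilon) < 0$ into an exponential decay of $\mu_k$ valid simultaneously for all bad $k$. Extra care is needed near $a = 0$ (where the Stirling approximation underlying $\log \mu_k \approx n\, g_a(y)$ is unreliable, so one falls back on the cruder bound $\mu_k \le (m(q-1))^k q^{-n}$, which suffices for $k = o(n)$) and near $a = 1$ (where the required gap comes from $b(1) > 1$, itself a consequence of the strict convexity of $b$ together with $b(a^*) = 1$ in Remark~\ref{remark:g}).
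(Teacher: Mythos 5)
Your proposal is correct and follows essentially the same route as the paper: part (b) from $b(1)<2$ in Remark~\ref{remark:g}, and part (a) by combining Corollary~\ref{cor:circuits}(c) at $a^*=1-q^{-1}$ (existence of a circuit by time $(1+o(1))n$) with a first-moment bound on $\mu_k$ expressed through $g_{k/n}$ and the strict convexity of $b$ to exclude, uniformly over $k$, circuits of length outside $(a^*\pm\delta)n$ by a time slightly beyond $n$. One small technical caution: the crude bound $\mu_k\le (m(q-1))^k q^{-n}$ only handles $k=O(n/\log n)$, not all $k=o(n)$; for the remaining small-$a$ range use $\binom{m}{k}\le (em/k)^k$ (or note that $g_a(1+\epsilon)\to-\log q$ as $a\to0^+$, so the compactness argument extends continuously to $a=0$).
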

The proofs for Corollaries~\ref{cor:circuits} and~\ref{cor2:circuits} are  given in Section~\ref{sec:circuits}.

\remove{
\begin{figure}[h]
\begin{center}
\includegraphics[scale=0.35]{b}
\caption{Plot of $b(a)$ for $a\in (0,1)$ for $q=2$}
\label{fig:distribution}
\end{center}
\end{figure}
}
\subsection{Connectivity}

The concept of vertex-connectivity in graphs generalizes naturally to matroids.
It is easily seen that a graph $G$  is $k$-vertex-connected if and only if
it is not the union of two edge-disjoint subgraphs $G_1,G_2$ such that $\min\{|V(G_1)|, |V(G_2)|\} > k$,  and $|V(G_1) \cap V(G_2)| < k$;
the latter is equivalent to $|V(G_1)| + |V(G_2)| - |V(G)| < k$.
 
Analogously, a \emph{vertical $k$-separation} in a matroid $M$ is a partition $(A_1,A_2)$ of $M$ so that 
\begin{enumerate}[(i)]
\item\label{nontriv} $\min\{\rank(A_1),\rank(A_2)\} \ge k$, and 
\item\label{lambda} $\rank(A_1)+\rank(A_2)-\rank(M) \le k-1$. 
\end{enumerate}
The \emph{vertical connectivity} $\kappa(M)$ is defined to be the smallest $k$ such that $M$ has a vertical $k$-separation. 
It may be the case that $M$ has no vertical $k$-separations for any $k$, in which case $\kappa(M) = \infty$\footnote{some work defined $\kappa(M)$ to be the rank of $M$ in this case}; 
this holds, for instance, if $M = \bbF_q^n$.

From the perspective of graph theory, this is the most natural notion of matroid connectivity; 
indeed, if $G$ is a graph with no isolated vertex, then the value of $\kappa$ for the graphic matroid $M(G)$ 
is equal to the vertex connectivity of $G$: see \cite{oxley2011matroid}, Theorem 8.6.1. 
(Incidentally, this fact is the reason for the unnatural-looking offset by one in condition \ref{lambda}). 

However, $\kappa$ fails to have a certain natural matroid property: invariance under duality. 
Each matroid $M$ has a dual matroid $M^*$, and the relationship between $M$ and $M^*$ is crucial 
in much of matroid theory -- for instance, we have $M^{**} = M$, and matroid duality agrees with planar duality
in the case of the graphic matroids of planar graphs.
It is not necessary here to discuss matroid duality in detail, but we comment that 
the rank function of the dual matroid is given by $\rank^*(X) = |X| + \rank(E(M) \setminus X) - \rank(M)$
(see \cite{oxley2011matroid}, Proposition 2.1.9). 

In general, we have $\kappa(M) \ne \kappa(M^*)$, so vertical connectivity has a dual notion. 
A \emph{cyclic $k$-separation} of $M$ is a vertical $k$-separation of $M^*$,
and the \emph{cyclic connectivity} $\kappa^*(M)$ is the smallest $k$ such that $M$ has a cyclic $k$-separation. Similarly as before, $\kappa^*(M)$ defined to be $\infty$ if no cyclic separation exists.
Using the above formula for the dual rank function, one can easily show that a partition $(A_1,A_2)$ of $M$ is a cyclic $k$-separation of $M$ if and only if 
\begin{enumerate}[(i)]
\item\label{nontrivcyc} $\rank(A_i) < |A_i|$ for each $i$, and 
\item\label{lambdacyc}$\rank(A_1)+\rank(A_2)-\rank(M) \le k-1$. 
\end{enumerate}

Condition \ref{nontrivcyc} can be replaced with the requirement that $A_1$ and $A_2$ are dependent, or 
by the condition $\min(\rank^*(A_1), \rank^*(A_2)) \ge k$. 
It is harder to relate this intuitively to graphs, except to comment that, if $M$ is the graphic matroid 
of a planar graph $G$, then the cyclic connectivity of $M$ is the vertex connectivity of a planar dual of $G$.
One way to construct a small cyclic separation in a matroid $M$ is simply to take a small circuit; 
if $C$ is a circuit of $M$ for which $E(M) \setminus C$ is a dependent set, then 
$(C, E(M) \setminus C)$ satisfies \ref{nontrivcyc}, and $\rank(C) + (\rank(E(M) \setminus C) - \rank(M)) \le |C|-1 + 0$, 
so $C$ gives a cyclic $|C|$-separation, implying that $\kappa^*(M) \le |C|$. 
In the setting of planar duality, this corresponds to a fact that a small cycle in the planar dual of $G$ 
gives rise to a small cut in $G$. 

Evidently cyclic connectivity is not invariant under matroid duality. However, there is a third notion of 
connectivity that is. A \emph{Tutte $k$-separation of $M$} is a partition $(A_1,A_2)$ of $M$ so that  
\begin{enumerate}[(i)]
\item $\min\{|A_1|,|A_2|\}\ge k$, and 
\item $\rank(A_1)+\rank(A_2)-\rank(M) < k-1$. 
\end{enumerate}
The \emph{Tutte connectivity} $t(M)$ is the smallest $k$ so that $G$ is Tutte $k$-connected. 
Each cyclic or vertical $k$-separation is also a Tutte $k$-separation, which implies that $t(M) \le \min(\kappa(M), \kappa^*(M))$. 
In fact, one can show that if $|M| \ge 3$ and $t(M) < \infty$, 
then there is always either a vertical or a cyclic $t(M)$-separation, 
which implies that equality holds.
This fact is essentially stated in \cite{oxley2011matroid}, Proposition 8.6.6, but the notation is slightly 
different from ours in edge cases, as $\kappa(M)$ and $\kappa^*(M)$ are defined in \cite{oxley2011matroid} to always be finite. 
\begin{prop}
    If $M$ is a matroid with $|M| \ge 3$, then $t(M) = \min\{\kappa(M), \kappa^*(M)\}$.    
\end{prop}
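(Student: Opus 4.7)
The inequality $t(M) \le \min\{\kappa(M), \kappa^*(M)\}$ is immediate from the preceding discussion, since any vertical or cyclic $k$-separation is itself a Tutte $k$-separation: a vertical $k$-separation has $|A_i| \ge \rank(A_i) \ge k$ by definition, and a cyclic $k$-separation of $M$, being a vertical $k$-separation of $M^*$, satisfies $|A_i| \ge \rank^*(A_i) \ge k$; both share the $\lambda$-inequality required of a Tutte $k$-separation. The plan is therefore to prove the reverse inequality by extracting a vertical or cyclic $t(M)$-separation from a given Tutte $t(M)$-separation.

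Fix $k = t(M)$, assume $k$ is finite, and let $(A_1, A_2)$ be a Tutte $k$-separation. I would split into three cases. If $\rank(A_i) \ge k$ for both $i$, then $(A_1,A_2)$ is itself a vertical $k$-separation, so $\kappa(M) \le k$. If both $A_1$ and $A_2$ are dependent, then $(A_1,A_2)$ is itself a cyclic $k$-separation, so $\kappa^*(M) \le k$. The only remaining case, after swapping the parts if necessary, is that $\rank(A_1) < k$---which forces $A_1$ dependent since $|A_1| \ge k > \rank(A_1)$---and $A_2$ independent, so that $|A_2| = \rank(A_2) \ge k$.

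In this remaining case the plan is surgery on $(A_1,A_2)$. If some $e \in A_1$ lies in $\mathrm{cl}(A_2)$ and $|A_1| > k$, then $(A_1 \setminus \{e\}, A_2 \cup \{e\})$ is a cyclic $k$-separation: the new second part is dependent because $e \in \mathrm{cl}(A_2)$, the new first part remains dependent, and $\lambda$ does not increase. If on the other hand $A_1 \cap \mathrm{cl}(A_2) = \emptyset$, then since $\rank(A_1) < k \le \rank(A_2) \le \rank(M)$ one can pick $e \in A_2 \setminus \mathrm{cl}(A_1)$; moving such an $e$ into $A_1$ strictly raises $\rank(A_1)$ while preserving the $\lambda$-bound, and provided $|A_2| > k$ the partition remains a Tutte $k$-separation. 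Iterating eventually brings $\rank(A_1)$ up to $k$, yielding a vertical $k$-separation.

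The hard part will be the boundary situation where $|A_1| = k$ (so no element can be removed from $A_1$), or where the iteration drives $|A_2|$ down to $k$ before $\rank(A_1)$ reaches $k$. Here my plan is to choose $(A_1,A_2)$ extremally among Tutte $k$-separations---for instance minimizing $\rank(A_1) + \rank(A_2)$, or maximizing $\min(|A_1|,|A_2|)$---and then perform a simultaneous swap of some $e \in A_1$ with some $f \in A_2$; the hypothesis $|M| \ge 3$ together with the extremality of the chosen separation should guarantee that such a swap produces a vertical or cyclic $k$-separation, or else contradict the minimality of $k = t(M)$ by producing a Tutte $(k-1)$-separation. This simultaneous-swap argument in the boundary case is where the proof requires the most care.
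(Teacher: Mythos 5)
Your easy direction and your first two cases are fine, and the individual exchange moves are locally sound, but the argument stops exactly where the content of the proposition lies, and what remains is not a routine detail. Even before the boundary case you flag, the claim that iterating the move of some $e\in A_2\setminus \mathrm{cl}(A_1)$ ``eventually brings $\rank(A_1)$ up to $k$, yielding a vertical $k$-separation'' is unjustified: each move keeps $\lambda=\rank(A_1)+\rank(A_2)-\rank(M)$ fixed but drops the rank of the independent side by one, so when $\rank(A_1)$ reaches $k$ the other side has rank $|A_2|-(k-\rank(A_1))$, which need not be $\ge k$. You can repair this by aiming for a vertical $(\lambda+1)$-separation instead (which still certifies $\kappa(M)\le k$), but a direct computation shows the second side then has rank at least $\lambda+1$ exactly when $\rank(M)\ge k+1$. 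So the genuinely unresolved configuration is $\rank(M)=k$, i.e.\ the independent side $A_2$ is a basis and $|A_1|$ is small --- precisely your boundary case, for which you offer only an intention (``the extremality \dots should guarantee''), not an argument.

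Moreover, that boundary case cannot be settled by a generic extremal-choice-plus-swap using only the inequality $\lambda\le k-1$, which is all your write-up ever invokes. Take the rank-$2$ matroid on four elements represented over $\FF_2$ by the columns $(1,0),(1,0),(0,1),(1,1)$: the partition (parallel pair, basis) has both sides of size $2$ and $\lambda=1$, yet this matroid has no vertical and no cyclic separation whatsoever, so $\min\{\kappa,\kappa^*\}=\infty$ under the conventions of this paper; no swap can ever produce the required separation there. What rescues the proposition in this regime is exactly the edge-case bookkeeping the paper sidesteps by citing Oxley's Proposition 8.6.6: either Oxley's convention that $\kappa$ is capped at $\rank(M)$, or the strict form of the Tutte condition used here, $\rank(A_1)+\rank(A_2)-\rank(M)<k-1$. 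Indeed, with the strict condition one has $\rank(A_1)\le k-2$ in the stuck configuration, and then moving any single element of $A_1$ across leaves both sides dependent and yields a cyclic separation certifying $\kappa^*\le k$; without it the statement is simply false for the example above. A correct completion must therefore engage with this stronger hypothesis (or the finiteness conventions) in the case where $A_2$ is a basis, and your proposal never does --- that is the missing step, and it is the crux of the proof rather than a boundary technicality.
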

This fact shows that Tutte connectivity is invariant under matroid duality, so in a sense it is 
the most natural connectivity notion of the three; usually the unadorned `connectivity' of a matroid
refers to the Tutte connectivity. 

The \emph{girth} $\gir(M)$ of a matroid $M$ is the size of a smallest circuit of $M$, or $\infty$ if $M$ is independent. 
We have seen that in nontrivial cases, small circuits give small cyclic separations. It follows that the girth usually
provides an upper bound for the connectivity of a matroid. The following bound (\cite{oxley2011matroid}, Theorem 8.6.4) 
will be useful.
\begin{prop}\label{p:tutte}
    If $M$ is a matroid that is not a uniform matroid $U_{r,n}$ with $n \ge 2r-1$, then $t(M) = \min(\kappa(M), \gir(M))$. 
\end{prop}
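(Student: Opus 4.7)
The plan is to establish the two inequalities $t(M) \le \min\{\kappa(M), \gir(M)\}$ and $t(M) \ge \min\{\kappa(M), \gir(M)\}$ separately, working directly from the definitions rather than routing through the preceding proposition.

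The lower bound is the easier direction. I would take a Tutte $k$-separation $(A_1, A_2)$ attaining $k = t(M)$. If both $\rank(A_i) \ge k$, then $(A_1, A_2)$ is already a vertical $k$-separation and $\kappa(M) \le k$. Otherwise $\rank(A_1) < k$, say; then $|A_1| \ge k > \rank(A_1)$ forces $A_1$ to be dependent, and $A_1$ must contain a circuit of size at most $\rank(A_1)+1 \le k$, giving $\gir(M) \le k$. Either way, $\min\{\kappa(M), \gir(M)\} \le t(M)$.

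For the upper bound, $t(M) \le \kappa(M)$ is immediate since any vertical $k$-separation satisfies $|A_i| \ge \rank(A_i) \ge k$ and is thus a Tutte $k$-separation with the same $k$. To establish $t(M) \le \gir(M)$, let $C$ be a smallest circuit and set $g = |C| = \gir(M)$. Because $\rank(C) = g-1$, the rank inequality $\rank(C) + \rank(E(M) \setminus C) - \rank(M) \le g-1$ is automatic, so $(C, E(M) \setminus C)$ is a Tutte $g$-separation whenever $|E(M)\setminus C| \ge g$, i.e.\ whenever $|M| \ge 2g$. When $|M| < 2g$, the standard bound $g \le \rank(M)+1$ gives $|M| \le 2\rank(M)+1$, and a short structural argument shows that any matroid of this form in which no smallest circuit yields a Tutte separation must be a uniform matroid $U_{r,n}$ with $n \ge 2r-1$ --- exactly the family excluded from the statement.

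The main obstacle is this last structural classification: ruling out matroids where no smallest circuit $C$ can participate in a Tutte $\gir(M)$-separation via the natural partition $(C,E(M)\setminus C)$ and where no alternative partition works either. By choosing $C$ of minimum size and then analysing how the remaining elements relate to $C$ --- in particular, whether every $(\rank(M)+1)$-subset of $E(M)$ is itself a circuit --- one pins down the uniform family and recovers the precise threshold $n \ge 2r-1$ appearing in the proposition.
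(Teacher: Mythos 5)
The paper does not actually prove this proposition --- it is quoted from Oxley (Theorem 8.6.4) --- so your proposal has to stand on its own, and its routine parts do: the inequality $\min\{\kappa(M),\gir(M)\}\le t(M)$ (a Tutte $t$-separation is either vertical or has a side of rank $<t$, which then contains a circuit of size at most $\rank+1\le t$), the bound $t(M)\le\kappa(M)$, and $t(M)\le\gir(M)$ via $(C,E(M)\setminus C)$ when $|M|\ge 2\gir(M)$ are all fine (note you are implicitly using the convention $\lambda\le k-1$ for a Tutte $k$-separation, consistent with the paper's intent though not its displayed strict inequality). The genuine gap is that the entire remaining content --- the case $|M|<2\gir(M)$ --- is delegated to an unproved ``short structural argument'', and the bridging claim you state for it is false as formulated. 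It is not true that a matroid with $|M|\le 2\rank(M)+1$ in which no smallest circuit yields a Tutte separation must be $U_{r,n}$ with $n\ge 2r-1$: for $U_{r,n}$ with $r+1\le n\le 2r-2$, a smallest circuit $C$ has $|E\setminus C|=n-r-1$ and $\lambda(C)=n-r-1$, so $(C,E\setminus C)$ could only be a Tutte $k$-separation for $k\ge n-r>|E\setminus C|$, hence no smallest circuit yields any Tutte separation; yet these matroids are not in the excluded family, and the proposition holds for them because $t=\kappa=n-r+1\le\gir$, realized by separations into two \emph{independent} sets rather than by circuits. So the dichotomy cannot be reached by analysing only how the other elements sit relative to a smallest circuit.

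The case analysis that actually closes the argument splits on rank. Writing $g=\gir(M)$, every $(g-1)$-set is independent, so $\rank(M)\ge g-1$. If $\rank(M)\ge g$ and $|E|\le 2g-1$, then $E\setminus C$ is independent, and $(C,E\setminus C)$ is a Tutte $k$-separation with $k=|E|-\rank(M)\le g-1$: indeed $\lambda=(g-1)+(|E|-g)-\rank(M)=k-1$, and $\min\{g,|E|-g\}\ge |E|-\rank(M)$ because $\rank(M)\ge g$ and $|E|\le g+\rank(M)-1$. Thus $t(M)\le g$ with no exceptional matroids in this branch at all. The only case left is $\rank(M)=g-1$, where girth $g$ plus rank $g-1$ forces every $g$-set to be a circuit, i.e.\ $M\cong U_{g-1,n}$; one then computes $t$ and $\kappa$ for uniform matroids directly to check that equality holds whenever $n\le 2r-2$ (there $t=\kappa=n-r+1$), so that failures are confined to the excluded range $n\ge 2r-1$. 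None of this appears in your write-up: as written, the proposal stops exactly where the theorem's real work begins.
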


A striking difference between matroid and graph connectivities is the monotonicity. From the definitions it is easy to see that all of the Tutte, the vertical, and the cyclic connectivities are not monotone; i.e.\ adding elements to a matroid may decrease the connectivity. Our first observation is that $\kappa(M[A_m])$ is a.a.s.\ monotonely non-decreasing as $m$ grows. On the other hand, the Tutte connectivity first follows $\kappa(M[A_m])$, and then after some linear number of steps, it becomes governed by $\text{gir}(M[A_m])$ and decreases as $m$ grows. The evolutionay trajectory of $\kappa^*(M[A_m])$ is very different from $\kappa(M[A_m])$, which starts from $\infty$ and in the end is governed by $\text{gir}(M[A_m])$. We will study $\kappa^*(M[A_m])$ in a different paper.

\begin{thm}\label{thm:monotone}
\begin{enumerate}[(a)]
\item A.a.s.\ $\kappa(M[A_m])$ is monotonely non-decreasing as $m$ increases.
\item A.a.s.\ $t(M[A_m])=\kappa(M[A_m])$ for all $m=n+o(n)$.
\item A.a.s.\ there exists $\hat m=\Theta(n)$ such that $t(M[A_m])=\kappa(M[A_m])$ for all $m<\hat m$, and  $t(M[A_m])<\kappa(M[A_m])$ for all $m\ge \hat m$.

\end{enumerate}
\end{thm}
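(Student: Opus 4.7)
For part (a), my plan is to show the a.a.s.\ monotonicity $\kappa(M[A_{m+1}])\ge\kappa(M[A_m])$ step-by-step in $m$. Given any vertical $k$-separation $(A_1,A_2)$ of $M[A_{m+1}]$ with $v_{m+1}\in A_1$ (WLOG), I consider the induced partition $(A_1\setminus\{v_{m+1}\},A_2)$ of $M[A_m]$. A short case analysis on whether $v_{m+1}\in\mathrm{span}(A_1\setminus\{v_{m+1}\})$ and on whether $v_{m+1}\in\mathrm{span}(M[A_m])$ shows: if $v_{m+1}\in\mathrm{span}(M[A_m])$ the induced partition is a vertical $k'$-separation of $M[A_m]$ for some $k'\le k$, yielding the desired inequality deterministically. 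If instead $v_{m+1}\notin\mathrm{span}(M[A_m])$, then $(\{v_{m+1}\},M[A_m])$ is already a vertical $1$-separation of $M[A_{m+1}]$, forcing $\kappa(M[A_{m+1}])=1$, so monotonicity requires $\kappa(M[A_m])\le 1$. Since any coloop of $M[A_m]$ produces a vertical $1$-separation, I will then prove that a.a.s., whenever $\rank(M[A_m])<n$, the matroid $M[A_m]$ has a coloop. The dominant coloop-free configuration is $M[A_m]=U_{m-1,m}$, with probability $O((q-1)^{m-1}/q^n)$; coloop-free configurations of higher corank contribute strictly lower-order terms, and summing over $m\le n$ (for $m\ge n+\omega(1)$, $\rank(M[A_m])=n$ a.a.s.\ by the standard hyperplane union bound) gives total probability $o(1)$.

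For part (b), Proposition~\ref{p:tutte} reduces the claim to showing $\kappa(M[A_m])\le\gir(M[A_m])$ a.a.s., since a.a.s.\ $M[A_m]$ is not a uniform matroid of the exceptional form. For the upper bound on $\kappa$, the partition consisting of the first $m-n+1$ columns and the remaining $n-1$ columns has ranks $m-n+1$ and $n-1$ respectively (a.a.s.), so it is a vertical $(m-n+1)$-separation, giving $\kappa(M[A_m])\le m-n+1=o(n)$. For the matching lower bound on the girth, I will apply Theorem~\ref{thm:old}(b)(i) and a first-moment union bound: using $\log\binom{m}{j}\le j\log(em/j)$ together with the elementary fact $f\log(1/f)\to 0$ as $f\to 0$, one derives $\log\mu_j\le o(n)-n\log q$ and hence $\mu_j\le q^{-n(1-o(1))}$ for every $j\le m-n+1$; summing gives $\sum_{j\le m-n+1}\mu_j=o(1)$, so a.a.s.\ $\gir(M[A_m])\ge m-n+2>\kappa(M[A_m])$, as required.

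For part (c), let $\hat m$ denote the smallest $m$ with $t(M[A_m])<\kappa(M[A_m])$; part (b) immediately gives $\hat m\ge (1-o(1))n$. For the upper bound $\hat m=O(n)$, I will show that at $m_0=Cn$, for a large enough constant $C>1$, $\kappa(M[A_{m_0}])>\gir(M[A_{m_0}])$ a.a.s. The upper bound $\gir(M[A_{m_0}])=O(n/\log n)$ is immediate from Theorem~\ref{thm:old}(b)(ii). The matching $\Omega(n)$ lower bound on $\kappa$ is obtained by strengthening part (b)'s construction: any vertical $k'$-separation with $k'=o(n)$ would force some hyperplane $H$ to contain all but at most $O(k')$ of the $m_0$ columns, and a Chernoff tail bound on $|M[A_{m_0}]\cap H|\sim\bin(m_0,1/q)$ combined with a union bound over the $\Theta(q^n)$ hyperplanes rules this out with probability $o(1)$ once $C$ exceeds a threshold depending only on $q$. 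Combining the two bounds then gives $\hat m\le m_0=\Theta(n)$.

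The technical core will be (i) the a.a.s.\ coloop-free analysis in part (a), which requires careful enumeration of the relevant matroid structures and sharp first-moment estimates, and (ii) the matching $\Omega(n)$ lower bound on $\kappa$ in part (c), which needs clean Chernoff and union bounds over hyperplanes. The deterministic case analyses at the start of part (a) are short but must carefully address edge cases such as $|A_1|=1$ in a vertical separation.
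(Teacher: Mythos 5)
Your routes for (a) and (b) differ from the paper's (which gets (a) from Cunningham's Lemma~\ref{monotonicity} once the rank reaches $n$, and (b) by citing Theorem~\ref{sublinear connectivity} and Corollary~\ref{cor:circuits}), and in outline they can be made to work; the genuine problems are in part (c). First, the girth estimate is wrong: at $m_0=Cn$ with $C$ a constant, Theorem~\ref{thm:old}(b)(ii) does \emph{not} give $\gir(M[A_{m_0}])=O(n/\log n)$. For any $k=o(n)$ one has $\log\mu_k\le k\log(em_0/k)+k\log(q-1)-n\log q=-(1-o(1))n\log q$, so a.a.s.\ there are no circuits of sublinear length at step $Cn$; by Corollary~\ref{cor:circuits}(c) and Remark~\ref{remark:g}, the girth at $m_0=Cn$ is $\Theta(n)$ (the smallest $a$ with $b(a)\le C$ is bounded away from $0$). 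Second, the structural claim underlying your $\kappa=\Omega(n)$ bound is false: a vertical $k'$-separation $(A_1,A_2)$ only forces $\rank(A_1)+\rank(A_2)\le \rank(M)+k'-1$; it does not force a hyperplane to contain all but $O(k')$ columns, because the side of small \emph{rank} may contain $\Theta(m)$ columns lying in a low-dimensional subspace. The correct first moment runs over pairs of subspaces whose dimensions sum to at most $n+k'-1$ (exactly what Kelly--Oxley's Lemma~\ref{lemma 4.7} does), not over single hyperplanes with a Chernoff bound on $|M\cap H|$. The clean repair is the paper's implicit one: quote Theorem~\ref{thm:old_connect}(a) ($\kappa=\infty$ a.a.s.\ once $m\ge(1+\alpha)n$) and Corollary~\ref{cor2:circuits}(b) (a Hamilton circuit, hence $\gir\le n$, before step $2n$), and then use the monotonicity of $\kappa$ from (a) together with the monotone non-increase of girth to get that $t<\kappa$ \emph{persists} for all $m\ge\hat m$ --- a persistence step the theorem asserts but your write-up omits.

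In part (a), the deterministic case analysis and the reduction to ``a.a.s.\ whenever $\rank(M[A_m])<n$ there is a coloop'' are sound, and that statement is true, but your first-moment sketch for it is not: $U_{m-1,m}$ is \emph{not} the dominant coloop-free configuration. Around $m\approx n$, corank-$1$ coloop-freeness costs about $((q-1)/q)^n$, whereas corank-$c$ coloop-freeness costs roughly $q^{-\Theta(c^2)}(1-q^{-c})^n$, which for $c$ of order $\log_q n$ is only about $n^{-(1+o(1))\log_q n}$ --- vastly larger; so ``higher corank contributes strictly lower-order terms'' is backwards, and the sum must also cover the window $n\le m\le n+O(\log_q n)$, where $\rank<n$ still occurs with constant probability, not just $m\le n$. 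The estimate does close (the maximum over $c$ is quasi-polynomially small and the sum over the relevant $m$ is $o(1)$), but this trade-off between $\pr(\corank\ge c)$ and the no-common-zero-coordinate event has to be carried out explicitly; the stated bound $O((q-1)^{m-1}/q^n)$ misses the real failure modes. (The paper sidesteps all of this by showing $\kappa=1$ up to the step where the rank reaches $n$ and then invoking Lemma~\ref{monotonicity}.) A minor point in (b): the last $n-1$ columns have rank exactly $n-1$ only with probability $\prod_{j\ge2}(1-q^{-j})$, not a.a.s.; your separation really needs only that $\rank(M[A_m])=n$ and that the first $m-n+1$ columns are independent, so state it that way, and add a word on uniformity over all $m=n+o(n)$.
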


Thanks to the monotonicity of the vertical connectivity of $M[A_m]$ as shown in Theorem~\ref{thm:monotone}(a), it is natural to define $\tau_{\kConnect}$ to be the smallest $m$ such that $M[A_m]$ is vertically $k$-connected. Due to Proposition~\ref{p:tutte} and our good understanding of $\tau_{\kCircuit}$ from Corollary~\ref{cor:circuits},  $\tau_{\kConnect}$ and $\tau_{\kCircuit}$ together would immediately determine the evolution of $t(M[A_m])$.  The limiting distribution of $\tau_{\kConnect}$ has been determined by Kordecki and {\L}uczak~\cite{kordecki1999connectivity} when $k$ is a constant; whereas an upper bound on $\tau_{\kConnect}$ was provided by Kelly and Oxley~\cite{kelly1984random} when $k$ is linear in $n$.

\begin{thm}\cite[Theorem 4]{kordecki1999connectivity}
    Let $k\ge 2$ and $m-n-(k-1)\log_qn\to c$ for some constant $c$.
    Then $$\lim_{n\to\infty}\pr\big(M[A_{m}]\text{ is $k$-connected}\big)=\exp\big(-(q-1)^{k-2}q^{-c}/(k-1)!\big).$$
\end{thm}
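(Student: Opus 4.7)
The plan is to apply the Poisson paradigm: identify a dominant type of ``bad'' configuration that detects failure of $k$-connectivity, show via the method of moments that its count converges in distribution to a Poisson random variable with mean $\lambda:=(q-1)^{k-2}q^{-c}/(k-1)!$, and rule out all other obstructions.

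Since $m\gg n$, a.a.s.\ $\rank(M[A_m])=n$, so a vertical $(k-1)$-separation is a partition $(A_1,A_2)$ of $[m]$ with $\min(\rank A_1,\rank A_2)\ge k-1$ and $\rank A_1+\rank A_2\le n+k-2$. The cheapest realization takes the form: a hyperplane $H\subseteq\FF_q^n$ and a set $S\subseteq[m]$ of exactly $k-1$ column indices such that the columns indexed by $S$ are linearly independent and lie outside $H$, while the remaining $m-k+1$ columns lie in $H$ and span it. Call such a pair $(H,S)$ a \emph{bad hyperplane} and let $X$ denote their count. Using $[n]_q\sim q^n/(q-1)$, the facts that a uniform random vector lies in $H$ with probability $q^{-1}$, that $k-1$ random vectors outside $H$ are linearly independent with probability $1-o(1)$, and that $\sim n$ random vectors in $H$ span $H$ a.a.s., the first moment is
\[
\ex[X]=(1+o(1))\cdot\frac{q^n}{q-1}\cdot\binom{m}{k-1}(1-q^{-1})^{k-1}q^{-(m-k+1)}=(1+o(1))\cdot\frac{(q-1)^{k-2}}{(k-1)!}\,m^{k-1}q^{n-m}.
\]
Substituting $m-n=(k-1)\log_q n+c+o(1)$ gives $m^{k-1}q^{n-m}\to q^{-c}$, so $\ex[X]\to\lambda$.

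To upgrade to Poisson convergence, I would compute the factorial moments $\ex[(X)_r]$ for each fixed $r\ge 1$ by summing over ordered $r$-tuples of bad hyperplanes. The dominant term comes from \emph{generic} tuples, where $\dim(H_{i_1}\cap\cdots\cap H_{i_j})=n-j$ for every subset and the $S_i$ are pairwise disjoint; a direct extension of the first-moment computation shows this contribution equals $(1+o(1))\lambda^r$. Any overlap, either between the hyperplanes (reducing the intersection dimension) or between the $S_i$ (forcing a column to satisfy incompatible containments), places extra linear constraints on the $\Theta(m)$ remaining columns and costs a factor $q^{-\Omega(m-n)}=o(1)$. The method of moments then yields $X\to\Po(\lambda)$ in distribution.

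Finally, I would bound the probability of non-bad-hyperplane vertical $j$-separations with $j\le k-1$. Parameterizing such a separation by $s=n-\rank(A_1)\ge 1$, $t=\rank(A_2)\ge j$ with $t\le s+j-1$, and $\ell=|A_2|\ge t$, the expected count is at most $\gbinom{n}{s}_q\gbinom{n}{t}_q\binom{m}{\ell}q^{-s(m-\ell)-(n-t)\ell}$, representing choices of the supporting subspaces, the partition, and the probability the columns lie where required. With $m-n=(k-1)\log_q n+O(1)$, a routine exponent comparison shows that every triple $(s,t,\ell)\ne(1,k-1,k-1)$ contributes a factor $q^{-\omega(1)}$ relative to the dominant case, so summing over the polynomially many admissible triples yields total contribution $o(1)$. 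Combined with the previous step this gives $\pr(M[A_m]\text{ is }k\text{-connected})=\pr(X=0)+o(1)\to e^{-\lambda}$. The main obstacle is this last uniform estimate: one must handle boundary regimes (in particular, large $\ell$, where $\binom{m}{\ell}$ is polynomially large) by carefully exploiting the slow growth of $m-n$ to absorb the polynomial prefactors and Gaussian binomial coefficients.
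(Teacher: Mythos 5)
You should first note that the paper does not actually prove this statement: it is quoted from Kordecki--{\L}uczak, and the closest in-paper computation is the second-moment argument in the proof of Theorem~\ref{sublinear connectivity}, whose indicator $X_{I,S}$ is exactly your ``bad hyperplane'' (your spanning requirement added). Your first moment and the value of $\lambda$ are correct, and the Poisson/method-of-moments strategy is a reasonable reconstruction. However, even in the factorial-moment step your stated accounting is off: pairs with $S_i\cap S_j\neq\emptyset$ are saved only by a polynomial factor of order $m^{-|S_i\cap S_j|}$ (not $q^{-\Omega(m-n)}$), and $r$-tuples of hyperplanes through a common codimension-$j<r$ subspace are not penalized by the naive count at all (that count is polynomially \emph{larger} than $\lambda^r$); they must be killed by a structural incompatibility, e.g.\ a column of $S_1$ disjoint from the other $S_i$ would have to lie in $H_2\cap H_3\subseteq H_1$ while avoiding $H_1$. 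These points are repairable, but they are not ``direct extensions'' of the first moment.

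The genuine gap is the final step. The claim that every triple $(s,t,\ell)\neq(1,k-1,k-1)$ contributes $o(1)$ is false. Take $k=2$ and $(s,t,\ell)=(2,2,2)$: your bound is $\gbinom{n}{2}_q^2\binom{m}{2}q^{-2(m-2)-2(n-2)}\asymp m^2q^{2(n-m)}\asymp q^{-2c}$, a constant; more generally for $k=2$ every diagonal triple $(s,s,s)$ contributes $\Theta(1)$. This is not an artifact of a loose bound: whenever two bad hyperplanes with disjoint sets $S_1=\{v_1\}$, $S_2=\{v_2\}$ exist (an event of limiting probability $1-(1+\lambda)e^{-\lambda}>0$), the partition $(\{v_1,v_2\},\,[m]\setminus\{v_1,v_2\})$ is a vertical $1$-separation of exactly this shape. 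Hence a raw union bound over separation shapes can never yield $o(1)$; such separations are harmless only because they already force $X\ge1$, and your argument never makes that exclusion. What must be bounded is $\pr(\exists\text{ a vertical }j\text{-separation with }j\le k-1\text{ and }X=0)$, e.g.\ by showing that a.a.s.\ any such separation produces a hyperplane with at most $k-1$ columns outside it, that hyperplanes with at most $k-2$ outside columns have expected number $O(n^{-1})$ in this range of $m$, and that the degeneracies (outside columns dependent, inside columns not spanning, $\rank(A_m)<n$) each have probability $o(1)$. A related unhandled point of the same kind: separations of the dominant shape $(1,k-1,k-1)$ in which some column of $A_2$ lies in $\spn(A_1)$ are not bad hyperplanes, and must likewise be absorbed via the ``few columns outside a hyperplane'' bound rather than your exponent comparison. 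Without these structural reductions the identification $\pr(\text{not }k\text{-connected})=\pr(X\ge1)+o(1)$, and hence the limit $e^{-\lambda}$, is not established.
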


Kelly and Oxley give the following upper bounds for when $M[A_{n\times m}]$ becomes $k$-connected.
\vspace{0.3cm}

\begin{thm} \label{thm:old_connect}
\begin{enumerate}[(a)]
\item (\cite[Theorem 4.4]{kelly1984random}) A.a.s.\ $\kappa(M[A_m])=\infty$\footnote{In the original work of~\cite{kelly1984random}, the theorem was phrased as ``$M[A_m]$ is vertically $n$-connected''; we rephrased it to be consistent with our definition of vertical connectivity when no separator exists.} if $m\ge (1+\alpha)n$ where $\alpha$ is any constant such that 
$$\alpha>\frac{\log(2q-1)}{2\log q-\log(2q-1)}.$$
    \item (\cite[Theorem 4.5]{kelly1984random} ) Suppose that $k\sim tn$ for some fixed $0<t<1$ then a.a.s.\ $\tau_{\kConnect}\le (1+\alpha)n$  for any constant $\alpha$ such that 
$$t\log\left[\frac{(1+t)\alpha}{t^2}\right]<(\alpha-t)\ln q-2t.$$
    \end{enumerate}
\end{thm}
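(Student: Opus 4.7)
My plan is to handle both parts via first-moment (union bound) arguments over ``witnesses'' for failure of vertical $k$-connectivity. First I would condition on the a.a.s.\ event that $\rank(A_m) = n$, which follows from the proof of Lemma~\ref{full row rank} since $m \ge n + \omega(1)$. Under this event, a bipartition $(A_1, A_2)$ witnesses $\kappa(M[A_m]) < k$ precisely when the spans $V_i := \mathrm{span}(A_i)$ are both proper subspaces of $\FF_q^n$ and satisfy $\dim V_1 + \dim V_2 \le n+k-2$ (equivalently $\dim(V_1 \cap V_2) \le k-2$). In part (a), taking $k=\infty$, only the first condition matters, and the event reduces to the existence of two distinct hyperplanes $H_1, H_2$ whose union covers every column of $A_m$.

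For part (a) I would union bound over pairs of distinct hyperplanes. Since $|H_1 \cup H_2| = 2q^{n-1} - q^{n-2}$ for $H_1\neq H_2$, a uniformly random column lies in $H_1 \cup H_2$ with probability exactly $(2q-1)/q^2$. The number of unordered pairs of distinct hyperplanes is at most $q^{2n}/(2(q-1)^2)$, so
\[
\pr(\kappa(M[A_m]) < \infty) \le \frac{q^{2n}}{2(q-1)^2} \left(\frac{2q-1}{q^2}\right)^m + o(1).
\]
Setting $m = (1+\alpha)n$, the right-hand side is $o(1)$ precisely when $(2q-1)^{1+\alpha} < q^{2\alpha}$, equivalently $\alpha > \log(2q-1)/(2\log q - \log(2q-1))$. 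A key point is that the naive independence bound $\pr(\text{col}\in H_1)\pr(\text{col}\in H_2)$, summed over partitions, gives only $(2/q)^m$ per pair, which is useless for $q=2$; the inclusion-exclusion saves the crucial factor and yields the correct threshold.

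For part (b), setting $a_i = n - \dim V_i \ge 1$, the constraint $\dim V_1 + \dim V_2 \le n+k-2$ becomes $a_1 + a_2 \ge n-k+2 \sim (1-t)n$. The natural union bound sums, over such $(a_1, a_2)$ and over partitions of the columns into $(A_1, A_2)$ of sizes $(m_1, m_2)$, the product $\gbinom{n}{a_1}_q \gbinom{n}{a_2}_q q^{-a_1 m_1 - a_2 m_2}$; collapsing the $m_1$-sum gives $(q^{-a_1}+q^{-a_2})^m$ per $(a_1, a_2)$. The hard part will be the asymmetric regime where one of $a_1, a_2$ is $O(1)$ and the other is linear in $n$: the subspace count alone is of order $q^{\Theta(n^2)}$, which the probability factor---decaying only like $q^{-\min(a_i) m}$---cannot compensate with the crude bound. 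To overcome this I would use the tight inclusion-exclusion $|V_1 \cup V_2|/q^n = q^{-a_1}+q^{-a_2}-q^{-a_1-a_2}$ together with the refined pair-count $N(a_1, a_2) = \gbinom{n}{a_1}_q\, q^{a_1 a_2}\, \gbinom{n-a_1}{a_2}_q$ for pairs with $V_1+V_2 = \FF_q^n$, and carry out the optimization jointly over $(a_1, a_2, m_1, m_2)$ subject to all feasibility constraints. After isolating the binding near-balanced case, the condition for convergence of the union bound simplifies to Kelly and Oxley's inequality $t \log [(1+t)\alpha/t^2] < (\alpha - t)\log q - 2t$.
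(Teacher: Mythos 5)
First, note that the paper does not prove Theorem~\ref{thm:old_connect} at all: both parts are quoted from Kelly and Oxley \cite{kelly1984random}, and the closest in-paper machinery is Lemma~\ref{lemma 4.7} and the way it is deployed in the proof of Theorem~\ref{sublinear connectivity}(a). Your argument for part (a) is correct and complete: under the a.a.s.\ event $\rank(A_m)=n$, finiteness of $\kappa$ is exactly the event that two distinct hyperplanes cover all columns, the exact union size $(2q-1)q^{n-2}$ gives per-column probability $(2q-1)/q^2$, the pair count is at most $q^{2n}/2(q-1)^2$, and the resulting condition $(2q-1)^{1+\alpha}<q^{2\alpha}$ is precisely the stated threshold; your observation that the inclusion--exclusion (rather than the bound $2/q$) is what makes the case $q=2$ work is also right.

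Part (b), however, has a genuine gap. You correctly identify the asymmetric regime (say $a_1=1$, $a_2=n-k+1\sim(1-t)n$) as the obstruction, but neither of your proposed remedies repairs it. Restricting to pairs with $V_1+V_2=\FF_q^n$ gives the count $\gbinom{n}{a_1}_q\,q^{a_1a_2}\,\gbinom{n-a_1}{a_2}_q \asymp q^{a_1(n-a_1)+a_2(n-a_2)}$ (since $a_1a_2+a_2(n-a_1-a_2)=a_2(n-a_2)$), i.e.\ the same exponential order as the crude product of Gaussian binomials; and because $a_1+a_2\ge n-k+2=\Theta(n)$ forces $\max(a_1,a_2)=\Theta(n)$, the tight inclusion--exclusion changes the per-column probability only by a factor $1-q^{-a_1-a_2}/(q^{-a_1}+q^{-a_2})\ge 1-q^{-\Theta(n)}$, hence by $1-o(1)$ over all $m$ columns. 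So the $(a_1,a_2)=(1,n-k+1)$ term of your union bound is still at least $q^{\,n-1+(k-1)(n-k+1)}\cdot q^{-m}=q^{\Theta(n^2)-\Theta(n)}\to\infty$, and the optimization you defer to cannot converge as set up. The missing idea is to tie the subspaces to the columns: require $V_i$ to be \emph{exactly} $\mathrm{span}(A_i)$, which imposes $m_i\ge n-a_i$ and makes the factor $q^{-a_2m_2}\le q^{-a_2(n-a_2)}$ cancel the subspace count; equivalently, and this is what Kelly and Oxley actually do, one indexes the union bound not by pairs of subspaces but by a choice of $n+\ell-1$ columns forming bases of the two sides, paying only $\bigl((q^j+q^{n+\ell-1-j}-q^{\ell-1})/q^n\bigr)^{m-(n+\ell-1)}$ for the remaining columns --- exactly Lemma~\ref{lemma 4.7}, which the paper reuses (with the maximization over $(\ell,j)$ carried out in Claim~\ref{claim:b}) to prove Theorem~\ref{sublinear connectivity}(a). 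Finally, your last sentence asserts rather than derives that the surviving near-balanced terms reproduce Kelly and Oxley's inequality $t\log[(1+t)\alpha/t^2]<(\alpha-t)\log q-2t$; with the corrected bound this is a nontrivial optimization over $(\ell,j,m_1,m_2)$ and is the actual content of their Theorem 4.5, so it needs to be done, not assumed.
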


Our contributions are the determination of the sharp phase transition of $\tau_{\kConnect}$ for all $k=o(n)$, and a lower bound on $\tau_{\kConnect}$ for $k=\Theta(n)$. 
\begin{thm} \label{sublinear connectivity}
\begin{enumerate}[(a)]
\item  Suppose $k\to\infty$ and $k=o(n)$. Then, a.a.s.\ $\tau_{\kConnect}-n\sim k\log_q (n/k)$.
\item  Suppose that $k\sim tn$ for some fixed $0<t< 1$. Then, a.a.s.\ $\tau_{\kConnect}\ge (1+\alpha)n$ where $\alpha$ is any constant satisfying the following.
\begin{equation}
t\log\frac{1+\alpha}{t}+(1+\alpha-t)\log\frac{1+\alpha}{1+\alpha-t}+t\log(q-1)-\alpha\log q >0. \label{eq:LB_connect}
\end{equation}
\end{enumerate}
\end{thm}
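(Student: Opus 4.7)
The extremal structure driving $\tau_{\kConnect}$ in both parts is a hyperplane $H \subseteq \FF_q^n$ such that exactly $k$ columns of $A_m$ lie outside $H$ and are linearly independent; such an $H$ witnesses a vertical $k$-separation via the partition (columns in $H$, columns outside $H$), with $\rank(A_1) = n-1$, $\rank(A_2) = k$, and overlap $k-1$. Let $X$ denote the number of such witnessing hyperplanes. A first-moment computation gives
\[
\ex[X] \;=\; [n]_q \binom{m}{k}(q-1)^k q^{-m} \prod_{i=1}^{k-1}(1-q^{i-n}),
\]
and the thresholds in both parts are precisely where $\log\ex[X]$ changes sign after Stirling's approximation.

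For part (a), I would argue both directions. For the lower bound $\tau_{\kConnect} \ge n + (1-\epsilon)k\log_q(n/k)$ a.a.s., the goal is to show $X > 0$ a.a.s.\ via the second moment method. Compute $\ex[X(X-1)]$ by classifying ordered pairs of distinct hyperplanes $(H, H')$ according to the multinomial counts $(a,b,c,d)$ of columns in the four regions $H \cap H'$, $H \setminus H'$, $H' \setminus H$, and $\FF_q^n \setminus (H \cup H')$, respectively; the generic configuration (where $d = 0$, so the two sets of outside-columns are disjoint) contributes $\ex[X]^2(1+o(1))$, giving $\var(X) = o(\ex[X]^2)$ and hence $X > 0$ a.a.s.\ by Chebyshev. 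For the matching upper bound $\tau_{\kConnect} \le n + (1+\epsilon)k\log_q(n/k)$, I would union-bound over all vertical $k$-separations parameterized by $U = \text{span}(A_1)$ and $V = \text{span}(A_2)$ with $\dim U, \dim V \ge k$ and $\dim(U \cap V) \le k-1$. The dominant term is the hyperplane type $(\dim U, \dim V) = (n-1, k)$, whose expected count is $\ex[X] \to 0$; every other type has $U$ of codimension $s \ge 2$, and the contribution is exponentially smaller because each extra codimension forces columns to satisfy one additional linear constraint, contributing a factor of $q^{-m}$ that dominates the $q^{s(n-s)}$ gain from the larger number of candidate flats.

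For part (b), apply the same framework at $m = (1+\alpha)n$ with $k \sim tn$. Stirling gives $\log\binom{(1+\alpha)n}{tn} = n\bigl[t\log\tfrac{1+\alpha}{t} + (1+\alpha-t)\log\tfrac{1+\alpha}{1+\alpha-t}\bigr] + O(\log n)$, from which
\[
\log_q \ex[X] \;\sim\; \frac{n}{\log q}\left[t\log\tfrac{1+\alpha}{t} + (1+\alpha-t)\log\tfrac{1+\alpha}{1+\alpha-t} + t\log(q-1) - \alpha\log q\right],
\]
which is linear in $n$ with positive coefficient under hypothesis \eqref{eq:LB_connect}; hence $\ex[X] \to \infty$ exponentially. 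The second moment argument proceeds as in part (a), but with all parameters on a linear scale.

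The main obstacle is the second moment estimate for part (b): since $k = \Theta(n)$ and the typical pair-overlap counts $(a,b,c,d)$ are also $\Theta(n)$, one must verify that the full sum over two-hyperplane configurations is dominated, up to a factor $(1+o(1))$, by the disjoint-witnesses configuration. This requires a Chernoff/large-deviation argument to control configurations where the column counts deviate from their typical product-measure values. A secondary challenge in part (a) is organizing the union bound over separation types $(U,V)$ to confirm that the hyperplane case is genuinely dominant, since the raw counts of pairs $(U,V)$ with $\dim U + \dim V - \dim(U \cap V) = n$ can be huge; the resolution is that requiring the spans to actually be achieved by the columns introduces enough linear constraints to tame the count.
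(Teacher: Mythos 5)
Your overall strategy --- hyperplane witnesses with independent outside columns, counted by first and second moments for the lower bounds in (a) and (b), and a first-moment union bound over separations dominated by the hyperplane split for the upper bound in (a) --- is the same as the paper's, and your first-moment computation correctly locates both thresholds, including \eqref{eq:LB_connect}. However, the two steps you yourself flag as obstacles are exactly where the proof's content lies, and your proposed resolutions are partly misdirected. For the second moment: your claim that the ``generic'' pair configuration is $d=0$ (disjoint witness sets) and contributes $(1+o(1))\ex[X]^2$ is false whenever $k^2/m\to\infty$ --- in particular throughout part (b) and already for $k\gg\sqrt n$ in part (a) --- since conditioned on two hyperplanes both being witnesses the overlap of their witness sets concentrates near $k^2/m$; a large-deviation argument organized around $d=0$ will not deliver $\var(X)=o((\ex X)^2)$. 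What actually closes the estimate (and is how the paper argues) is an exact identity: for a uniformly random column and two distinct hyperplanes $S_i\ne S_j$, the events of lying outside $S_i$ and outside $S_j$ are independent, so each cross term equals $\mu^2=((q-1)^{k-1}q^{-m})^2$ regardless of how the witness sets overlap, and no concentration input is needed; indexing the count by pairs (column set, hyperplane) rather than by hyperplanes with an ``exactly $k$ outside'' condition is what makes this identity available cleanly. Two smaller but real points: your witness yields a vertical $k$-separation, which only rules out vertical $(k+1)$-connectivity --- you need $k-1$ independent columns outside the hyperplane to get a vertical $(k-1)$-separation and hence $\kappa<k$ --- and converting ``not $k$-connected at $m_0$'' into ``$\tau_{\kConnect}>m_0$'' requires the a.a.s.\ monotonicity of $\kappa(M[A_m])$ (Theorem~\ref{thm:monotone}(a)), which you never invoke.

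For the upper bound in (a), the union bound over pairs of flats $(U,V)$ as you describe it does not close: already for your ``dominant'' type $(\dim U,\dim V)=(n-1,k)$ there are roughly $q^{\,n-1+k(n-k)}$ pairs while the probability that all $m$ columns lie in $U\cup V$ is only about $q^{-m}$, so the expected count explodes at $m=n+(1+\varepsilon)k\log_q(n/k)$; the same flawed accounting underlies your ``$q^{-m}$ beats $q^{s(n-s)}$'' heuristic. The repair is not merely ``the spans are achieved by the columns'' in the abstract: one must avoid counting flats altogether, e.g.\ by conditioning on the first $n-o(n)$ columns being independent and parameterizing a putative $\ell$-separation by which columns span the two sides and how the rank $n+\ell-1$ splits --- this is precisely Kelly--Oxley's Lemma 4.7, which the paper imports --- and one then still needs the monotonicity analysis of the resulting terms (the paper's Claim~\ref{claim:b}) to show the worst rank split is the extreme one $(\ell,j)=(k-1,k-1)$; varying only the codimension of $U$ while keeping $V$ of dimension $k$ does not address the balanced splits, nor the case where the low-rank side contains many columns. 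So your plan follows the paper's route, but both load-bearing estimates are missing as written, and in each case the configuration you identify as dominant is not the one that actually governs the sum.
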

Our lower bound on $\tau_{\kConnect}$ for linear $k$ does not match the upper bound in Theorem~\ref{thm:old_connect}. See Figure~\ref{fig:bounds} for the plot in which we compare the upper bounds by Kelly and Oxley and the lower bounds in Theorem~\ref{sublinear connectivity}(b); the horizontal axis is for $t=k/n$, and the vertical axis is for $n^{-1}\tau_{\kConnect}-1$. Determining the asymptotic value of $\tau_{\kConnect}$ for $k=\Theta(n)$ is an interesting open problem. 

The proofs for Theorems~\ref{thm:monotone} and~\ref{sublinear connectivity} will be given in Section~\ref{sec:cconectivity}.

\remove{
\begin{figure}[h]
\begin{center}
\includegraphics[scale=0.3]{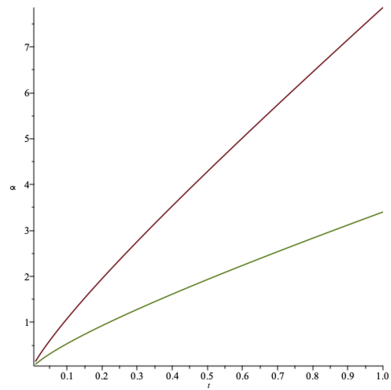}
\caption{Upper bound of $\alpha$ from Kelly and Oxley and our lower bound.}
\label{fig:bounds}
\end{center}
\end{figure}
}

\subsection{Critical number}

The critical number is an extension of the notion of the chromatic number of graphs to matroids. 
It is easy to see that a graph $G$ is $2^k$-colourable if and only if $E(G)$ is the union of $k$ edge cuts of $G$. 
For each set $U \subseteq V(G)$, the edge cut $\delta(U)$ corresponds to the set of support-$2$ vectors in $\bbF_2^V$ that have 
a nonzero dot product with the characteristic vector $1_U$. Hence, we can form an analogue of chromatic number by 
defining $\chi_q(M)$ for each matroid $M \subseteq \bbF_q^n$ to be the minimum $k$ such that there are vectors 
$v_1, \dotsc, v_k \in \bbF_q^n$ such that, for all $w \in M$, the dot product $w^Tv_i$ is nonzero for some $i \in [k]$. In other words, $\chi_q(M)$ is the minimum integer $k$
such that there exists an $(n-k)$-dimensional subspace $S$ of $\FF_q^n$ such that $M\cap S=\emptyset$.
We call $\chi_q(M)$ the \emph{critical number} of $M$; it has previously been called the critical exponent. 
It can be seen as the right analogue of chromatic number in a variety of contexts; see \cite{oxley2011matroid}, p. 588 
for a discussion.


Our goal is to determine when $M[A_{m}]$ has critical number $k$ for all $1\le k\le n$.
(In the range of values for $m$ that we consider, $A_{m}$ will a.a.s.\ not contain a zero column, so the critical number is well-defined).
Notice that the critical number starts at 1 (i.e. when $m=1$) and increases monotonically with $m$ (i.e. as more columns are added).
Also, notice that the critical number cannot skip over any values of $k$, since adding one column can increase the critical number by at most one.
Thus, we focus on determining the step when the critical number of $M[A_{m}]$ jumps from  $k$ to $k+1$ for $1\le k\le n-1$.

Let $\tau_{\kColour}$ be the minimum integer $m$ such that $\chi_q(M[A_m])=k+1$. In other words, $\tau_{\kColour}$ is the precise step where the critical number jumps from $k$ to $k+1$. We obtain the following theorem whose proof will be presented in Section~\ref{sec:colouring}.
\begin{thm}\label{thm:colouring}
Let $k$ be a positive integer such that $k\le n-\log_q n-\log_q\log n-\omega(1)$. Then, a.a.s.\ $\tau_{\kColour}\sim -k(n-k)\log q/\log(1-q^{-k})$.
\end{thm}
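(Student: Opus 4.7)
The plan is the second-moment method. First I would define $X_m$ as the number of $(n-k)$-dimensional subspaces $S$ of $\FF_q^n$ with $S \cap \{v_1, \ldots, v_m\} = \emptyset$; by definition of $\chi_q$, $\chi_q(M[A_m]) \le k$ iff $X_m \ge 1$, so $\pr(\tau_{\kColour} > m) = \pr(X_m \ge 1)$. A uniformly random vector misses a fixed $(n-k)$-dim subspace independently with probability $1 - q^{-k}$, so $\ex[X_m] = \gbinom{n}{k}_q(1-q^{-k})^m$. The hypothesis on $k$ gives $\gbinom{n}{k}_q = (1+o(1))q^{k(n-k)}/\prod_{j=1}^k(1-q^{-j})$, hence $\ex[X_{m^*}] = \Theta(1)$ where $m^* := -k(n-k)\log q/\log(1-q^{-k})$. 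For the upper bound, at $m = (1+\eps)m^*$ we get $\log\ex[X_m] = -\eps k(n-k)\log q + O(1) \to -\infty$, so Markov gives $\pr(\tau_{\kColour} > (1+\eps)m^*) \le \ex[X_m] \to 0$.

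For the lower bound, at $m = (1-\eps)m^*$ we have $\ex[X_m] \to \infty$, and by Paley-Zygmund it suffices to prove $\ex[X_m^2]/(\ex[X_m])^2 = 1 + o(1)$. I would parametrize ordered pairs $(S,S')$ of $(n-k)$-dim subspaces by $e := (n-k) - \dim(S\cap S') \in \{0, \ldots, K\}$ with $K := \min(k, n-k)$, count them as $\gbinom{n}{k}_q \gbinom{n-k}{e}_q \gbinom{k}{e}_q q^{e^2}$ (first pick $S$, then a codimension-$e$ subspace of $S$, then extend disjointly), and use $\pr(\text{no column in } S\cup S') = (1-q^{-k}(2-q^{-e}))^m$ to write
\[
\frac{\ex[X_m^2]}{(\ex[X_m])^2} = \sum_{e=0}^K A_e B_e, \quad A_e := \frac{\gbinom{n-k}{e}_q \gbinom{k}{e}_q q^{e^2}}{\gbinom{n}{k}_q}, \quad B_e := \left(\frac{1-q^{-k}(2-q^{-e})}{(1-q^{-k})^2}\right)^m.
\]
The $q$-Vandermonde identity $\gbinom{n}{k}_q = \sum_e \gbinom{n-k}{e}_q \gbinom{k}{e}_q q^{e^2}$ yields $\sum_e A_e = 1$, and $B_e \ge 1$, so $\sum_e A_e B_e \ge 1$; the task reduces to showing $\sum_e A_e (B_e - 1) = o(1)$.

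The hard part will be the second-moment tail analysis. The $e = 0$ term is easy: $A_0 (B_0 - 1) \le A_0 B_0 = 1/\ex[X_m] \to 0$. For $e \ge 1$, Gaussian binomial asymptotics show $A_e$ concentrates near $e = K$ with tail decaying like $q^{-(K-e)^2}$ (up to bounded factors involving $\gamma_q := \prod_{j\ge1}(1-q^{-j})$), while $\log(1+x) \le x$ combined with $mq^{-k} \le (1-\eps)k(n-k)\log q$ (using $-\log(1-q^{-k}) \ge q^{-k}$) gives $\log_q B_e = O(k(n-k)q^{-e})$; thus $B_e \to 1$ precisely in the range where the mass of $A_e$ lives. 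The most delicate regime is $k \approx n/2$, where the $\gamma_q$ factors must be tracked carefully. The hypothesis $k \le n - \log_q n - \log_q\log n - \omega(1)$ is essential for two purposes: it controls the Pochhammer factor in $\gbinom{n}{k}_q$, and in the case $K = n-k$ (when $k > n/2$) it ensures $B_K = 1 + o(1)$ via $m^* q^{-n} = O(k(n-k)q^{-(n-k)}) = o(1)$; when $K = k$, the algebraic identity $1-q^{-k}(2-q^{-k}) = (1-q^{-k})^2$ forces $B_k \equiv 1$ automatically. A careful case-split based on the size of $e$ (small $e$, where $A_e$ is negligible; intermediate $e$, where $A_e$ decays fast enough to offset $B_e - 1$; and $e$ near $K$, where $B_e \to 1$) gives $\sum_e A_e(B_e - 1) = o(1)$, completing the lower bound and yielding $\tau_{\kColour} \sim m^*$ a.a.s.
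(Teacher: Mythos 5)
Your route is structurally the same as the paper's: a first moment over $(n-k)$-dimensional subspaces disjoint from the column set, with the threshold read off from $\log \ex X = k(n-k)\log q + m\log(1-q^{-k})$, and a second moment organized by the intersection dimension of a pair of subspaces, counted via the formula of Lemma~\ref{subspace intersection} (your $e$ is the paper's $k-h$). Two of your choices are genuinely cleaner than the paper's bookkeeping: normalizing with the $q$-Vandermonde identity so that $\sum_e A_e=1$, $B_e\ge 1$, and only $\sum_e A_e(B_e-1)=o(1)$ remains to be shown; and noting that $B_e\equiv 1$ at $e=k$, which silently disposes of the pair $(q,k)=(2,1)$ that the paper must handle by a separate even-circuit argument. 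You also place the hypothesis $k\le n-\log_q n-\log_q\log n-\omega(1)$ exactly where the paper does, namely to force $B_K=1+o(1)$ when $K=n-k$ via $m^*q^{-n}=o(1)$.

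The gap is in the step you yourself flag as the hard part, and it is not merely an omitted routine computation: the quantitative bound you propose to run it with is too weak at the boundary. From $\log(1+x)\le x$ with the $q^{-2k}$ term dropped and $-\log(1-q^{-k})\ge q^{-k}$ you get $\log B_e\le (1-\eps)k(n-k)\log q\,\frac{q^{-e}}{(1-q^{-k})^2}$; for $(q,k,e)=(2,2,1)$ this gives $\log_q B_1\le (1-\eps)\tfrac{16}{9}(n-2)$ while $\log_q A_1=-(n-3)+O(1)$, so $A_1B_1$ is not shown to vanish even though it does. To make the term-by-term comparison close, you must keep the subtracted term, i.e.\ use $\log B_e\le m\,q^{-2k}(q^{k-e}-1)/(1-q^{-k})^2$ (and, for extra margin, a sharper comparison of $-\log(1-q^{-k})$ with $q^{-k}$), and then verify an inequality of the shape $(k-e)(n-k-e)>(1-\eps)\,k(n-k)\,q^{-e}(1-q^{e-k})/(1-q^{-k})^2$ uniformly over $1\le e\le K-1$, over all $q\ge 2$, and over all admissible $k$ (bounded $k$, $k=\omega(1)$ with $k$ well below $n/2$, $k\approx n/2$, and $k>n/2$). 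That uniform verification is precisely the content of the paper's Claim~\ref{claim:sum} together with Lemma~\ref{lem:inequality}, and it is the bulk of the paper's proof. Relatedly, your summary ``$B_e\to1$ precisely in the range where the mass of $A_e$ lives'' is false when $k=O(1)$: there the mass sits at $e=k$, but at $e=k-1$ (mass $\asymp q^{-(n-2k+1)}$) one has $\log B_e=\Theta(n)$, so one wins only by comparing exponential rates, not by $B_e\to1$. So the skeleton is right and matches the paper, but the uniform tail analysis still has to be carried out, with sharper estimates than the ones you quote.
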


\begin{remark}
Note that our theorem covers all positive integers $k$ up to distance $\log_q n+\log_q\log n+\omega(1)$ from $n$. For greater $k$ up to $n-1$ we have the trivial asymptotic upper bound $nq^n\log q$ on $\tau_{\kColour}$ from coupon collection (with $q^n$ coupons). 
\end{remark}

\subsection{Other related work}

Other than the rank, circuits, connectivity, critical number, minors that are discussed in this paper, the thresholds and limiting distributions of the number of small submatroids were studied by Oxley~\cite{oxley1984threshold}, and further extended by Kordecki~\cite{kordecki1988strictly,kordecki1996small}.

The uniformly random matroid on $n$ elements was introduced and studied by Mayhew, Newman, Welsh and Whittle~\cite{mayhew2011asymptotic}. Research in this direction focuses on enumeration of matroids and matroid extensions. We refer the readers to~\cite{lowrance2013properties,bansal2015number,nelson2016almost,knuth1974asymptotic,pendavingh2018number} for results in this field.

\section{Preliminary}
\lab{sec:preliminary}

A matroid is defined on a pair $M=(E,\I)$ where $E$ is a set called the ground set of the matroid $M$, and $\I\subseteq 2^E$ denotes the set of independent sets of $M$. The size of $M$ is $|E|$, the number of elements in the ground set. The rank of $M$,  denoted by $\rank(M)$, is the size of a largest independent set. The co-rank of $M$, denoted by $\corank(M)$, is defined by $|E|-\rank(M)$.  Let $\FF$ be a field. A matroid $M=(E,\I)$ is said $\FF$-representable if there is a matrix $A=[a_p]_{p\in E}$ over $\FF$, where the columns of $A$ are indexed by elements in $E$, such that $S\subseteq E$ is an independent set if and only if $\{a_p: p\in S\}$ is a linearly independent set. A matroid $M$ is free if $E(M)$ is an independent set, and $M$ is simple if $M$ does not have dependent subsets of cardinality one or two. Consequently, if $M$ is represented by a matrix $A$ over $\FF$, then $M$ is free if all columns of $A$ are linearly independent, and $M$ is simple if $A$  does not contain the zero column, or two linearly dependent columns. The uniform matroid $U_{r,n}=([n],\I)$ is the matroid on ground set $[n]$ such that $\I$ consists of all subsets of $[n]$ of cardinality at most $r$. A circuit of a matroid is a minimal dependent subset of elements in $M$. In other words, every proper subset of a circuit is an independent set. The length of a circuit is the number of elements in the circuit. 

Suppose that $M=(E,\I)$ and $X\subseteq E$. The deletion of $X$ from $M$ is defined by $M\setminus X=(E\setminus X, \{I\in\I: I\cap X=\emptyset\})$. The contraction of $X$ from $M$ is defined by $M/X=(E\setminus X, \I^*)$ where
$$
\I^*=\{I\in\I: I\cap X=\emptyset, I\cup J\in \I\ \text{for some maximal independent subset $J$ of $X$}\}.
$$
A submatroid of $M$ is any matroid obtained by deleting a subset of elements in $M$; whereas a minor of $M$ is a matroid obtained by deleting and contracting elements in $M$. If $M$ is a matroid represented by a matrix $A$ over a field $\FF$, then there are matrix operations described as follows which yield representations for submatroids and minors of $A$.

Given a matrix $A$ over $\FF$ where columns are indexed by $E$, and $X\subseteq E$, let $A_X$ denote the matrix obtained from $A$ by only including columns in $X$.  Let $A_{E/X}$ be any matrix obtained by first obtaining matrix $B\sim A$ via row operations such that 
$$
B_{X}=\left[
\begin{array}{cc}
I_{a\times a} & *\\
0 & 0
\end{array}
\right],   \quad \text{where $a=\rank(A_X)$},
$$
and then deleting the $a$ rows where $I_{a\times a}$ lies, together with all the columns in $X$.
Suppose that $A=[a_p]_{p\in E}$ is a matrix representing $M=(E,\I)$, and $X\subseteq E$. Then, $M\setminus X$ is represented by $A_{E\setminus X}$ and $M/X$ is represented by $A_{E/X}$. We refer the readers to Oxley~\cite{oxley2011matroid} for other basics in matroid theory. 

Finally, we use standard Landau notation in this paper. Given sequences of real numbers $a_n$ and $b_n$, we say $a_n=O(b_n)$ if there exists $C>0$ such that $|a_n|\le C|b_n|$ for every $n$. We say $a_n=o(b_n)$ if $\lim_{n\to\infty} a_n/b_n=0$. We say $a_n=\Theta(b_n)$ or $a_n\asymp b_n$ if $a_n,b_n>0$ and $a_n=O(b_n)$ and $b_n=O(a_n)$. Finally, we write $a_n=\omega(b_n)$ if $a_n,b_n>0$ and $b_n=o(a_n)$.



\section{Minors}\label{sec:minor}

\begin{lemma}[Lemma 2 of \cite{altschuler2017inclusion}] 
\label{full row rank} 
 For every $m\le n$,
$$\pr\big(\rank(A_m)=m\big)=\prod_{i=0}^{m-1}(1-q^{i-n}).$$
\end{lemma}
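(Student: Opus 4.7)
The plan is a one-pass sequential exposure of the columns $v_1,\ldots,v_m$. I will condition on the vectors already revealed and compute, at each step, the probability that the next vector enlarges the current independent set.

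Concretely, let $E_i$ denote the event that $v_1,\ldots,v_i$ are linearly independent (with $E_0$ being the trivial event). I would write
\[
\pr\big(\rank(A_m)=m\big)=\pr(E_m)=\prod_{i=0}^{m-1}\pr(E_{i+1}\mid E_i),
\]
and then evaluate each factor. Conditioned on $E_i$, the vectors $v_1,\ldots,v_i$ span an $i$-dimensional subspace $W_i\subseteq \FF_q^n$; since $W_i$ has exactly $q^i$ elements and $v_{i+1}$ is uniform on $\FF_q^n$ and independent of $v_1,\ldots,v_i$, we have
\[
\pr(v_{i+1}\in W_i\mid E_i)=\frac{q^i}{q^n}=q^{i-n},
\]
so $\pr(E_{i+1}\mid E_i)=1-q^{i-n}$ (noting that this uses $i\le m-1\le n-1$, hence $q^{i-n}\le 1$).

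Multiplying over $i=0,\ldots,m-1$ gives the claimed identity. The only thing to be careful about is handling the conditioning cleanly: the subspace $W_i$ is a random object, but its cardinality is deterministically $q^i$ on the event $E_i$, which is all that matters for the conditional probability. There is no real obstacle here; the argument is entirely routine, and in fact this is the same count one performs to enumerate ordered bases of $\FF_q^n$.
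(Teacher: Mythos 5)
Your proof is correct and is essentially the same argument as the paper's: sequentially expose the columns, note that conditioned on the first $i$ columns being independent the next column lands in their span with probability $q^{i-n}$, and multiply the conditional probabilities. The paper states this more tersely, but the underlying computation is identical.
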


\proof It follows immediately by the fact that given the first $i\le m-1$ column vectors of $A_m$ being linearly independent, the probability that the $(i+1)$-th column vector falls in the span of the first $i$ column vectors is equal to $q^{i-n}$. \qed 




\begin{lemma}\label{minor corank}
    If $M$ is a matroid containing $N$ as a minor, then $\corank(M)\ge \corank(N)$. 
\end{lemma}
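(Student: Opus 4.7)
The natural approach is by induction on the number of deletion and contraction steps needed to obtain $N$ from $M$: since a minor is built by iteratively deleting or contracting single elements, it suffices to show that each such single-element operation cannot increase corank. So the plan is to verify the two base cases $\corank(M \setminus e) \le \corank(M)$ and $\corank(M/e) \le \corank(M)$ for any $e \in E(M)$.

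For the two base cases, recall $\corank(M) = |E(M)| - \rank(M)$. Deleting an element $e$ reduces $|E|$ by one and reduces $\rank$ by either $0$ or $1$, depending on whether $e$ is a coloop; contracting $e$ reduces $|E|$ by one and reduces $\rank$ by either $0$ or $1$, depending on whether $e$ is a loop. In either operation, $\rank$ drops by at most $|E|$ does, so corank is non-increasing. Both claims about the behaviour of $\rank$ are standard and can be invoked from Oxley~\cite{oxley2011matroid}.

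Alternatively, one can give a one-shot proof by writing $N = M/C \setminus D$ for disjoint $C,D \subseteq E(M)$ and using the standard rank formula $\rank_N(X) = \rank_M(X \cup C) - \rank_M(C)$ for $X \subseteq E(N)$. Applying this with $X = E(N) = E(M) \setminus (C\cup D)$ and using the submodular inequality $\rank_M(E(M) \setminus D) \ge \rank(M) - |D|$ yields
\[
\rank(N) \;\ge\; \rank(M) - |D| - \rank_M(C),
\]
whence, since $|E(N)| = |E(M)| - |C| - |D|$,
\[
\corank(N) \;\le\; \corank(M) - \bigl(|C| - \rank_M(C)\bigr) \;\le\; \corank(M),
\]
because $\rank_M(C) \le |C|$. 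Either form of argument is routine; there is no real obstacle, only the need to be careful with the definition of contraction and with the direction of the rank inequality (since an upper bound on corank corresponds to a lower bound on rank).
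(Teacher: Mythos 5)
Your primary argument is essentially the paper's own proof: the paper also reduces to a single deletion or contraction and observes that the ground set shrinks by exactly one while the rank drops by at most one, so the corank cannot increase. Your alternative one-shot computation via $\rank_N(X)=\rank_M(X\cup C)-\rank_M(C)$ is also correct, but it is not needed beyond the elementary step-by-step argument.
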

 \proof
    Let $e\in E(M)$.
    In both the case of deleting $e$ or contracting $e$, 
    the rank of $M$ decreases by at most one, and the size of the ground set of $M$ decreases by exactly one.
    Thus, $\corank(M)\ge\corank(M\setminus e)$ and $\corank(M)\ge\corank(M/e)$.  
    The result follows. \qed

\medskip

{\em Proof of Theorem~\ref{very small minor}.}
    By Lemma \ref{minor corank},  $\tau_{\Nminor}\ge\tau_{\crkc}$. To prove that a.a.s.\ $\tau_{\Nminor}\le\tau_{\crkc}$, consider the following equivalent way of generating the process $A_m$ for $m\le \tau_{\crkc}$.
    As each column vector $v_i$ is drawn, we only expose whether $v_i$ lies in the subspace generated by $v_1,\ldots, v_{i-1}$. If it is, we colour the column red; otherwise we colour it blue. Stop the process when there are exactly $c$ red columns. The following claim follows by Lemma~\ref{full row rank}.

   \begin{claim}\label{claim:t}
   A.a.s.\ the first $n/2$ columns are blue.
   \end{claim}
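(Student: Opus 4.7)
The plan is to recognise that the event in the claim is precisely a linear independence event and to apply the already-stated Lemma~\ref{full row rank}. A column $v_i$ in the process is coloured blue if and only if $v_i \notin \spn{v_1,\ldots,v_{i-1}}$, so the event ``the first $\lfloor n/2\rfloor$ columns are all blue'' coincides exactly with the event $\{\rank(A_{\lfloor n/2\rfloor}) = \lfloor n/2\rfloor\}$. In particular, under this event no red columns have been produced yet, so the process has not been stopped by the $\tau_{\crkc}$ rule before step $\lfloor n/2\rfloor$.

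Next I would apply Lemma~\ref{full row rank} with $m = \lfloor n/2\rfloor$ to get the exact probability
\[
\pr\big(\text{first } \lfloor n/2\rfloor \text{ columns are blue}\big) = \prod_{i=0}^{\lfloor n/2\rfloor - 1}\!\big(1 - q^{i-n}\big),
\]
and then bound this product from below using the elementary inequality $\prod(1-x_i) \ge 1 - \sum x_i$ for $x_i \in [0,1]$. This yields the lower bound
\[
1 - \sum_{i=0}^{\lfloor n/2\rfloor - 1} q^{i-n} \;=\; 1 - q^{-n}\cdot\frac{q^{\lfloor n/2\rfloor}-1}{q-1} \;\ge\; 1 - \frac{q^{-\lceil n/2\rceil}}{1-q^{-1}},
\]
which tends to $1$ as $n\to\infty$. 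Hence the claimed event holds a.a.s.

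There is no real obstacle: the claim is essentially an immediate corollary of Lemma~\ref{full row rank}, and the only thing to check is that the geometric tail $\sum_{i<n/2} q^{i-n}$ is $o(1)$, which is trivial. Aside from writing ``$\lfloor n/2\rfloor$'' in place of ``$n/2$'' for cleanliness, I would keep the argument to two or three lines.
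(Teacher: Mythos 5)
Your proposal is correct and matches the paper's argument: the paper justifies the claim exactly by appealing to Lemma~\ref{full row rank}, and your identification of the all-blue event with $\{\rank(A_{\lfloor n/2\rfloor})=\lfloor n/2\rfloor\}$ plus the geometric-tail bound is precisely the computation that appeal leaves implicit.
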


    
   Next, we expose all the column vectors corresponding to the blue columns. Let $A_B$ denote the submatrix of $A_m$ composed of all blue columns of $A_m$. Since the column vectors of $A_B$ are linearly independent, $A_B$ is row equivalent to the identity matrix, with possibly a few zero rows  underneath. In other words, there is an invertible matrix $P$ such that $PA_B=\left[\substack{I\\ \boldsymbol{0}}\right]$ where $\boldsymbol{0}$ are a set of all-0 vectors.

   Finally, we expose the column vectors corresponding to the red columns. Note that
each red column vector is a uniformly random vector in the span of the blue column vectors generated before it. 
\begin{claim} \label{claim:span}
Suppose that $v$ is a red column vector and there are $i$ blue columns before $v$. Then,
$Pv\sim \left[\substack{[U_q]^{i\times 1}\\ \boldsymbol{0}}\right]$. 
\end{claim}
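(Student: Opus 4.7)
My plan is to fix (condition on) the realisation of the blue columns, which simultaneously determines the matrix $P$ and pins down the subspace $V$ in which $v$ must lie. Once this conditioning is in place, the claim reduces to the elementary fact that a uniform vector in $\FF_q^n$, conditioned on lying in a specified subspace, is uniform on that subspace.

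First, I would identify $V$ explicitly. If $w_1,\ldots,w_i$ denote the blue columns appearing before $v$ in order, then they are precisely the first $i$ columns of $A_B$, because $A_B$ lists the blue columns in the order in which they were generated. The defining relation $PA_B=\left[\substack{I\\\boldsymbol{0}}\right]$ then gives $Pw_j=e_j\in\FF_q^n$ for each $j\in[i]$, where $e_j$ is the $j$-th standard basis vector. Hence $P$ restricts to a linear bijection from $V:=\mathrm{span}(w_1,\ldots,w_i)$ onto $\mathrm{span}(e_1,\ldots,e_i)$, which we can identify with $\FF_q^i\times\{0\}^{n-i}$.

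Next, I would argue that, conditionally on the blue columns (equivalently on $P$ and $V$), the red column $v$ is uniform on $V$. If $v=v_j$, then by construction $v_j$ is uniform on $\FF_q^n$ and independent of $v_1,\ldots,v_{j-1}$; the event ``$v_j$ is red with precisely $w_1,\ldots,w_i$ as its blue predecessors'' is nothing more than the event $v_j\in V$. Conditioning a uniform $\FF_q^n$-valued random variable on membership in the non-empty subspace $V$ yields the uniform distribution on $V$. Applying the linear bijection $P$ preserves uniformity, so $Pv$ is uniform on $\FF_q^i\times\{0\}^{n-i}$; that is, its first $i$ coordinates are i.i.d.\ uniform in $\FF_q$ and its last $n-i$ coordinates are $0$, which is precisely the distribution $\left[\substack{[U_q]^{i\times 1}\\\boldsymbol{0}}\right]$.

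I do not expect any genuine difficulty here; the only step requiring mild care is the conditioning, where one must check that the colouring procedure reveals no information about $v$ beyond the subspace in which it lies. This is automatic, because the colour of $v_j$ is a deterministic function of $(v_j,V_j)$ with $V_j=\mathrm{span}(v_1,\ldots,v_{j-1})$, and the only relevant piece of randomness extracted from $v_j$ during the colouring step is the binary indicator of whether $v_j\in V_j$.
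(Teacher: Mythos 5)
Your proof is correct and takes essentially the same route as the paper: both arguments rest on the facts that, conditionally on the blue columns, the red vector $v$ is uniform on the span $V$ of the blue columns preceding it, and that $P$ maps those blue columns to $e_1,\dots,e_i$. The paper realises the uniform law on $V$ as $\sum_{j=1}^i z_j v_j$ with i.i.d.\ uniform coefficients $z_j$ and then applies $P$, while you push the uniform distribution on $V$ through the bijection given by restricting $P$ to $V$; these are the same argument, with yours additionally spelling out the conditioning step that the paper states as a preliminary remark before the claim.
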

In other words, $Pv$ has the same distribution as the vector obtained by appending $n-i$ 0's after a uniformly random vector in $\FF_q^i$.

Consider $M=M[A_m]$. Take the first $n/2$ blue columns of $A_m$ and partition them into $t:=\lceil n/2r\rceil$ groups $I_1,\ldots,I_t$ (by discarding the remaining columns if $n/2$ is not divisible by $r$), where $I_1$ denotes the first $r$ blue columns, $I_2$ denotes the next $r$ blue columns, etc. The a.a.s.\ existence of at least $n/2$ blue columns in $A_m$ is guaranteed by Claim~\ref{claim:t}. Let $X_j$ be the matrix obtained from $A_m$ by contracting all blue columns except for the blue columns in $I_j$ for $1\le j\le t$. Then, each $X_j$ has form $[I_{r\times r} \mid R_j]$.

\begin{claim}\label{claim:R}
$(R_j)_{j=1}^t$ are mutually independent, and each $R_j\sim [U_q]^{r\times c}$.
\end{claim}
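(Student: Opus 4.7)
\medskip

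\textbf{Proof proposal for Claim~\ref{claim:R}.} The plan is to exploit the representation obtained after applying the change-of-basis $P$, which trivialises both the blue columns and (via Claim~\ref{claim:span}) the red columns, and then to perform the contraction explicitly in coordinates.

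First I would fix notation. Let $b$ be the total number of blue columns in $A_m$; by Claim~\ref{claim:t} we may assume $b\ge n/2$, so every group $I_j$ lies among the first $n/2$ columns, and in particular every one of the $c$ red columns appears strictly after every column of every $I_j$. Denote the positions of the columns of $I_j$, within the ordering of the blue columns, by $k_1^{(j)},\ldots,k_r^{(j)}$; note that the sets $K_j:=\{k_1^{(j)},\ldots,k_r^{(j)}\}$ are pairwise disjoint subsets of $\{1,\ldots,b\}$. After replacing $A_m$ by $PA_m$, each blue column is the corresponding standard basis vector $e_{k}$ of $\FF_q^n$, and by Claim~\ref{claim:span} each red column $v_\ell$ becomes a vector $Pv_\ell = (w_{\ell,1},\ldots,w_{\ell,i_\ell},0,\ldots,0)^T$, where $i_\ell\ge n/2$ is the number of blue columns preceding $v_\ell$ and the $w_{\ell,s}$ are i.i.d.\ uniform on $\FF_q$. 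Moreover, by construction the random vectors $(w_\ell)_{\ell=1}^c$ are mutually independent.

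Next I would compute $X_j$ directly. To contract $B\setminus I_j$ one further row-reduces so that the columns of $B\setminus I_j$ form the identity on some $b-r$ rows (they already do on the rows indexed by $\{1,\ldots,b\}\setminus K_j$) and then deletes those rows along with the columns of $B\setminus I_j$. What remains is an $(n-(b-r))\times(r+c)$ matrix whose columns corresponding to $I_j$ form, after a row permutation, an $r\times r$ identity on top of $n-b$ zero rows, and whose columns corresponding to red elements are the restrictions of the $Pv_\ell$ to the rows indexed by $K_j\cup\{b+1,\ldots,n\}$. Since every entry of $Pv_\ell$ below the $i_\ell$-th row vanishes and since $K_j\subseteq\{1,\ldots,n/2\}\subseteq\{1,\ldots,i_\ell\}$, each such restricted red column has the form $(w_{\ell,k_1^{(j)}},\ldots,w_{\ell,k_r^{(j)}},0,\ldots,0)^T$. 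Discarding the trailing zero rows (which is a cosmetic operation, since they lie outside any basis witnessing rank) yields exactly the claimed form $X_j=[I_{r\times r}\mid R_j]$, where the $(s,\ell)$-entry of $R_j$ is $w_{\ell,k_s^{(j)}}$.

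Finally I would read off both the marginal distribution and the independence. For fixed $j$, the entries $\{w_{\ell,k_s^{(j)}}: 1\le s\le r,\ 1\le \ell\le c\}$ are drawn from distinct coordinates of the independent uniform vectors $w_\ell$, and are therefore i.i.d.\ uniform on $\FF_q$; hence $R_j\sim[U_q]^{r\times c}$. For distinct $j\ne j'$ the index sets $K_j$ and $K_{j'}$ are disjoint, so the entries of $R_j$ and $R_{j'}$ involve disjoint coordinates of each $w_\ell$ and are therefore independent; combined with the independence across $\ell$, this gives mutual independence of $R_1,\ldots,R_t$. I don't foresee a serious obstacle: the argument is essentially bookkeeping, and the only place requiring care is ensuring $K_j\subseteq\{1,\ldots,i_\ell\}$ for every red column, which is precisely what Claim~\ref{claim:t} provides. \qed
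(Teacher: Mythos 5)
Your proposal is correct and follows essentially the same route as the paper's proof: after the change of basis $P$, Claims~\ref{claim:t} and~\ref{claim:span} make the red columns into independent vectors with i.i.d.\ uniform coordinates in their first $n/2$ rows, and each $R_j$ is read off from a block of rows indexed by the (pairwise disjoint) positions of $I_j$, which gives both the uniform marginal and the mutual independence. Your write-up just carries out the contraction more explicitly in coordinates, while the paper states the same conclusion directly by identifying $R_j$ with consecutive $r$-row blocks of $PA_R$.
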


Since $N$ is $\FF_q$ representable, we may represent $N$ by a rank-$r$ matrix of form $[I_{r\times r}\mid R]$ for some $r\times c$ matrix $R$ over $\FF_q$. By definition, $M$ contains $N$ as a minor if $R_j=R$ for some $1\le j\le t$. By Claim~\ref{claim:R}, this occurs with probability $q^{-rc}$ for each $1\le j\le t$. Moreover, all $R_j$ are independent. Thus, the probability that $M$ has an $N$-minor is at least
$$
1-(1-q^{-rc})^t \ge 1- \exp\left(-\frac{n}{2r}q^{-rc}\right) = 1-o(1),
$$
since $rq^{rc}=o(n)$.
\qed

It remains to prove Claims~\ref{claim:span} and~\ref{claim:R}.

\noindent{\em Proof of Claim~\ref{claim:span}.~ } Let $v_1,\ldots, v_i$ denote the $i$ blue column vectors that appear before $v$. Let $z_1,\ldots, z_i$ be i.i.d.\ uniform random variables in $\FF_q$. Since $v$ is a uniform random vector in $\spn{v_1,\ldots,v_i}$, the span of $v_1,\ldots,v_i$, $v\sim \sum_{j=1}^i z_j v_j$. Hence,
\[
Pv \sim  \sum_{j=1}^i z_j Pv_j.
\]
The claim follows by the distribution of $z_1,\ldots, z_i$ and the fact that $P[v_1,\ldots, v_i]=\left[\substack{I_{i\times i}\\ \boldsymbol{0}}\right]$. \qed

\ss

\noindent{\em Proof of Claim~\ref{claim:R}.~ } Let $A_R$ be the matrix formed by the red columns of $A_m$. By Claim~\ref{claim:t} we may assume that the first $n/2$ columns of $A_m$ are all blue, and thus by Claim~\ref{claim:span}, the submatrix of $PA_R$ formed by the first $n/2$ rows has distribution $[U_q]^{n/2\times c}$. The claim follows by noticing that $R_1$ is the first $r$ rows of $PA_R$, $R_2$ is the next $r$ rows of $PA_R$, etc. \qed 
\medskip

{\em Proof of Theorem~\ref{thm:point-prob}.}\ By Theorem~\ref{very small minor}, $\tau_{\Nminor}=n+k$ if $\corank(A_{n+k-1})=c-1$ and $\corank(A_{n+k})=c$, which happens if $\rank(A_{n+k-1})=\rank(A_{n+k})=n+k-c$. Our derivation of $\pr(\rank(A_{n+k-1})=\rank(A_{n+k})=n+k-c)$ is an easy adaptation of the proof of~\cite[Fact 3]{kordecki1991random}. For each $1\le j\le n+k-c$, let $u_j$ be 
the number of column vectors $v$ that are added in the process $(A_m)_{m=1}^{n+k}$ that lies in the subspace $\S$ generated by the column vectors added before $v$ when the dimension of $\S$ is equal to $j$. Then, letting $u:=\sum_{j=0}^{n+k-c} u_j$, $\rank(A_{n+k-1})=\rank(A_{n+k})=n+k-c$ if and only if $u=c$ and $u_{n+k-c}\ge 1$. Moreover, all $u_j$s are independent random variables, and for each $1\le j\le n+k-c-1$, $u_j$ has geometric distribution with probability $q^{j-n}$. It follows then that 
\begin{eqnarray*}
&&\pr\Big(\rank(A_{n+k-1})=\rank(A_{n+k})=n+k-c\Big)\\
&&\hspace{1cm}=[z^c] \left(\prod_{j=0}^{n+k-c-1}\sum_{h=0}^{\infty}(zq^{j-n})^h (1-q^{j-n}) \right)\left(\sum_{h=1}^{\infty}(zq^{k-c})^h\right)\\
&&\hspace{1cm}=[z^c] (1+O(q^{-n})) \beta_{k,c} zq^{k-c} \prod_{t=c-k}^{n} \frac{1}{1-zq^{-t}}\\
&&\hspace{1cm}=q^{k-c} \beta_{k,c} [z^{c-1}] (1+O(q^{-n}+zq^{-n})) \prod_{t=c-k}^{\infty} \frac{1}{1-zq^{-t}}\\
&&\hspace{1cm}=q^{k-c} \beta_{k,c} [z^{c-1}] (1+O(q^{-n}+zq^{-n})) \prod_{t=0}^{c-k-1} (1-zq^{-t}) \prod_{t=0}^{\infty} \frac{1}{1-zq^{-t}}.
\end{eqnarray*}
By Euler's formula (see~\cite[Corollary 2.2]{andrews1998theory}), if $|t|<1$ and $|z|<1$ then
\[
\prod_{i=0}^{\infty}\frac{1}{1-zt^i}=1+\sum_{i=1}^{\infty}\frac{z^i}{\prod_{j=1}^i(1-t^j)}.
\]
Thus,
\begin{eqnarray*}
&&\pr\Big(\tau_{\Nminor}=n+k\Big)\\
&&\hspace{1cm}=q^{k-c} \beta_{k,c} [z^{c-1}] (1+O(q^{-n}+zq^{-n})) \left(\prod_{t=0}^{c-k-1} (1-zq^{-t})\right) \left(1+\sum_{i=1}^{\infty}\frac{z^i}{\prod_{j=1}^i(1-q^{-j})}\right)\\
&&\hspace{1cm}\sim \beta_{k,c} q^{k-c}\left( [z^{c-1}] \prod_{t=0}^{c-k-1} (1-zq^{-t}) + \sum_{i=1}^{c-1} \frac{1}{\prod_{j=1}^i(1-q^{-j})}[z^{c-1-i}] \prod_{t=0}^{c-k-1} (1-zq^{-t})  \right).
\end{eqnarray*}
The theorem follows. \qed
\ss

\noindent {\em Proof of Corollary~\ref{thm:one_step}.~} The claim that $\pr(\tau_{\Nminor}\le n+c)=1-o(1)$ follows by Theorem~\ref{thm:point-prob}.  Moreover, $C_{c,c}$ in  Theorem~\ref{thm:point-prob} is equal to $\gamma_{q,c}=\prod_{j=c}^{\infty} (1-q^{-j})>0.$ Hence, $\lim_{n\to\infty} \pr(\tau_{\Nminor}\le n+c-1)=1-\gamma_{q,c}$. \qed\ss

\noindent {\em Proof of Corollary~\ref{cor:minor}.~} Since $|N|\le (2-\eps)\sqrt{\log_q n}$, $rc\le (1-\eps)\log_q^n$ where $r=\rank(N)$ and $c=\corank(N)$, and thus $rq^{rc}=o(n)$. By Theorem~\ref{very small minor}, a.a.s.\ $\tau_{\Nminor}=\tau_{\crkc}$. Since $c=\omega(1)$, a.a.s.\ $\tau_{\crkc}=n+\omega(1)$ by Lemma~\ref{full row rank}. It follows then that a.a.s.\ $\rank(A_{\crkc})=n$ and consequently $\tau_{\crkc}=n+\corank(N)$.\qed\ss

\remove{
Our derivation of $\pr(\rank(A_{n+k-1})=\rank(A_{n+k})=n+k-c)$ is an easy adaptation of the proof of~\cite[Fact 3]{kordecki1991random}. For each $1\le j\le n+k-c$, let $u_j$ be 
the number of column vectors $v$ that are added in the process $(A_m)_{m=1}^{n+k}$ that lies in the subspace $\S$ generated by the column vectors added before $v$ when the dimension of $\S$ is equal to $j$. Then, letting $u:=\sum_{j=1}^{n+k-c-1} u_j$, $\rank(A_{n+k-1})=\rank(A_{n+k})=n+k-c$ if and only if the last $c-u$ column vectors are all lying in the subspace generated by the column vectors added earlier, which has dimension $n+k-c$. Note that all $u_j$s are independent random variables. For each $1\le j\le n+k-c-1$, $u_j$ has geometric distribution with probability $q^{j-n}$. It follows then that
\begin{eqnarray*}
&&\pr\Big(\rank(A_{n+k-1})=\rank(A_{n+k})=n+k-c\Big)\\
&&\hspace{1cm}=\sum_{i=1}^{c}  \pr\left(\sum_{j=1}^{n+k-c-1}u_j=c-i\right) q^{i(k-c)},
\end{eqnarray*}
where $q^{i(k-c)}$ is the probability that the last $i$ column vectors lie in the subspace of dimension $n+k-c$. The probability generating function for $\sum_{j=1}^{n+k-c-1}u_j$ is 
\[
\prod_{j=1}^{n+k-c-1} \ex(z^{u_j}) = \prod_{j=1}^{n+k-c-1} \frac{1-q^{j-n}}{1-zq^{j-n}} = (1+O(q^{-n}+zq^{-n})) \beta_{c,k}\prod_{j=c+1-k}^{\infty} \frac{1}{1-zq^{-j}}.
\]
By Euler's formula (see~\cite[Corollary 2.2]{})
\[
\prod_{k=0}^{\infty}\frac{1}{1-zt^k}=1+\sum_{k=1}^{\infty}\frac{z^k}{\prod_{i=1}^k(1-t^i)}.
\]
Thus,
}



Before proving Theorem~\ref{thm:PGr}, we present a probabilistic tool of Poisson approximation of the balls-into-bins model.

\begin{lemma}\label{Poisson}
    Suppose $b$ balls are placed into $k$ bins, independently and uniformly at random.
    Let ${\mathcal E}$ be the event that every bin gets at least one ball. Set $\lambda=b/k$. 
    Then $$\pr(\E)\le 2(1-e^{-\lambda})^k \le 2e^{-ke^{-\lambda}} 
            \qquad\text{and}\qquad \pr(\overline{\E})\le 2ke^{-\lambda}.$$
            
\end{lemma}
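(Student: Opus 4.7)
The plan is to use the standard Poissonization trick. I introduce an auxiliary \emph{Poissonized} model in which each bin independently receives a $\mathrm{Poisson}(\lambda)$ number of balls; the total number of balls $N$ is then $\mathrm{Poisson}(b)$, and conditional on $\{N=b\}$ the vector of bin occupancies has exactly the uniform $b$-balls-in-$k$-bins distribution. In the Poissonized model the events ``bin $i$ is non-empty'' are mutually independent, each with probability $1-e^{-\lambda}$, so immediately
$$
\pr(\E_{\mathrm{Pois}}) = (1-e^{-\lambda})^k, \qquad \pr(\overline\E_{\mathrm{Pois}}) \le k e^{-\lambda},
$$
the second bound being the union bound over the $k$ bins.

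Next I exploit monotonicity. The event $\E$ is monotone increasing in the number of balls, and $\overline\E$ is monotone decreasing; thinking of the balls as an infinite i.i.d.\ sequence of bin labels and inserting the first $j$ of them, one sees that $j\mapsto\pr(\E\mid N=j)$ is non-decreasing and $j\mapsto\pr(\overline\E\mid N=j)$ is non-increasing. Summing over $j\ge b$ (resp.\ $j\le b$) yields
$$
\pr(\E_{\mathrm{Pois}}) \ge \pr(N\ge b)\cdot \pr(\E\mid N=b), \qquad \pr(\overline\E_{\mathrm{Pois}}) \ge \pr(N\le b)\cdot \pr(\overline\E\mid N=b),
$$
and hence $\pr(\E) \le \pr(\E_{\mathrm{Pois}})/\pr(N\ge b)$ and $\pr(\overline\E) \le \pr(\overline\E_{\mathrm{Pois}})/\pr(N\le b)$.

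The final ingredient is the classical fact that, for integer $b\ge 1$, both $\pr(N\ge b)\ge \tfrac12$ and $\pr(N\le b)\ge\tfrac12$ when $N\sim\mathrm{Poisson}(b)$; equivalently, the median of $\mathrm{Poisson}(b)$ lies in $\{b-1,b,b+1\}$, as follows from the standard estimate $b-\log 2\le \mathrm{median}\le b+\tfrac13$ (directly checkable from the ratio $\pr(N=j+1)/\pr(N=j)=b/(j+1)$). Substituting these into the previous display gives $\pr(\E)\le 2(1-e^{-\lambda})^k$ and $\pr(\overline\E)\le 2ke^{-\lambda}$, while the remaining inequality $(1-e^{-\lambda})^k\le e^{-ke^{-\lambda}}$ is immediate from $1-x\le e^{-x}$.

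The main conceptual point is the Poissonization-plus-monotonicity trick; the only step requiring a little care is verifying $\pr(N\ge b),\pr(N\le b)\ge\tfrac12$, which is a standard folklore estimate for the Poisson distribution.
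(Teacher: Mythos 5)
Your proof is correct and is essentially the same argument as the paper's: the paper simply cites Theorems 5.6 and 5.10 of Mitzenmacher--Upfal, whose proof is precisely your Poissonization-plus-monotonicity argument together with the fact that $\pr(N\ge b),\pr(N\le b)\ge\tfrac12$ for $N\sim\mathrm{Poisson}(b)$ with $b$ an integer. You have merely unpacked the cited black box, so there is no substantive difference in approach.
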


\proof Let $Y_1,\ldots, Y_k$ be independent Poisson variables each with mean $\lambda$. Then, the distribution of the number of balls in bins is the same as $(Y_1,\ldots, Y_k)$ conditioned to $\sum_{i=1}^k Y_i=b$ (see e.g.\ \cite[Theorem 5.6]{mitzenmacher2017probability} for a proof). By Theorem 5.10 of~\cite{mitzenmacher2017probability} (with $f(x_1,\ldots, x_k)$ be the indicator variable that $x_i\ge 1$ for every $1\le i\le k$ or $f(x_1,\ldots, x_k)$ be the indicator variable that $x_i=0$ for some $1\le i\le k$), 
\[
\pr(\E)\le 2\pr(Y_i\ge 1 \forall i)=2(1-e^{-\lambda})^k;\quad \pr(\overline{\E})\le 2\pr(Y_i= 0\ \text{for some}\ i) \le 2 ke^{-\lambda}. \qed
\]

We also need the following lemma from~\cite{altschuler2017inclusion} concerning the distribution of a uniformly random vector in $\FF_q^n$ after a change of basis. 
\begin{lemma}\label{change of basis} \cite[Lemma 5]{altschuler2017inclusion}
    Suppose that $P\in\FF_q^{n\times n}$ is invertible, then $PA_m\sim[U_q]^{n\times m}$. 
\end{lemma}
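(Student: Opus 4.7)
The plan is to exploit the fact that, on the finite set $\FF_q^n$, left-multiplication by an invertible matrix is a bijection, and such a bijection transports the uniform distribution to itself. The argument can be carried out column by column.

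First, I would write $A_m=[v_1,\ldots,v_m]$, where by definition $v_1,\ldots,v_m$ are i.i.d.\ uniform on $\FF_q^n$. Then $PA_m=[Pv_1,\ldots,Pv_m]$, so it suffices to show that $Pv_1,\ldots,Pv_m$ are mutually independent and each uniformly distributed on $\FF_q^n$; this will immediately give $PA_m\sim[U_q]^{n\times m}$ since $[U_q]^{n\times m}$ is defined precisely as a matrix with $m$ independent uniform columns.

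For uniformity, since $P$ is invertible the map $v\mapsto Pv$ is a bijection $\FF_q^n\to\FF_q^n$. Hence for any fixed $u\in\FF_q^n$,
\[
\pr(Pv_i=u)=\pr(v_i=P^{-1}u)=q^{-n},
\]
so $Pv_i$ is uniform on $\FF_q^n$. For independence, each $Pv_i$ is a deterministic function of $v_i$ alone, and the $v_i$'s are mutually independent, so $Pv_1,\ldots,Pv_m$ are mutually independent as well.

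There is no real obstacle: the whole proof is a one-line change-of-variables argument on a finite probability space. The only minor subtlety worth stating explicitly is the reduction of the matrix-level claim to the column-level claim, which is valid because both $PA_m$ and $[U_q]^{n\times m}$ have jointly independent columns and the column distributions match.
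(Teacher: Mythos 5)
Your argument is correct: the bijection $v\mapsto Pv$ on $\FF_q^n$ preserves the uniform distribution of each column, and independence is inherited since each $Pv_i$ depends on $v_i$ alone. The paper simply cites this lemma from Altschuler and Yang without reproducing a proof, and your change-of-variables argument is exactly the standard one-line justification behind it, so there is nothing to add.
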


Next, we assume that $r=r(n)\to\infty$. Recall that
\[
\zeta = \gbinom{r}{1}_q=\frac{q^r-1}{q-1}. 
\]
Note that $PG(r-1,q)$ 
has $\zeta$ elements.


\noindent {\em Proof of Theorem~\ref{thm:PGr}.~} For part (a), let $f=\omega(\zeta)$ and $f=o(\zeta\log\zeta)$. First we prove that a.a.s.\ 
$$\tau_{\PGr}\le n+\zeta\log\zeta +2f.$$
    Set $m=n+\zeta\log\zeta+2f$.
    By Lemma \ref{full row rank}, a.a.s.\ the first $n+f$ columns of $A_{m}$ have rank $n$. 
    Following these, there are $b=\zeta\log \zeta+f$ columns.
    After a change of basis, we obtain the following matrix that is row equivalent to $A_m$:
    \[
    [I_{n\times n} \quad * \quad B],
    \]
    where $I_{n\times n}$ is the $n$ by $n$ identity matrix, $*$ is a set of $f$ columns, and $B$ is obtained from the above $b$ columns after the change of basis. By
    Lemma \ref{change of basis}, $B\sim [U_q]^{n\times b}$. 
    
   Deleting the $f$ columns in $*$ and contracting all but the first $r$ columns in $I_{n\times n}$ we obtain
    \[
    [I_{r\times r}  \quad B_{r}],
    \]
    where $B_r$ is the $r\times b$ matrix obtained from the first $r$ rows of $B$. Hence, $B_{r} \sim  [U_q]^{r\times b}$. By definition $M[I_{r\times r}  \ \ B_{r}]$ is a minor of $A_m$. It is thus sufficient to prove that $B_r$ contains all elements in $PG(r-1,q)$. (It suffices to prove that $B_r$ contains all elements other than those already contained in $I_{r\times r}$. However it does not change the bound in any significant way.)

    Consider each element of $PG(r-1,q)$ as a bin and consider each column of $B_r$ as a ball. We say a ball $j$ is thrown into a bin $z$ if the $j$-th column vector of $B_r$ corresponds to one of the $q-1$ vectors associated to the $z$-th bin. Hence, $B_r$ contains all elements in $PG(r-1,q)$ if and only if every bin receives at least one ball. A ball here corresponds to a nonzero column vector, and it is easy tho show that a.a.s.\ at most $f/2$ of the $b$ columns can be zero columns. Hence, the total number of balls is at least $b-f/2$, and the total number of bins is equal to $\zeta$. Setting $\lambda=(b-f/2)/\zeta=\log\zeta +f/2\zeta$ and by Lemma~\ref{Poisson}, 
    \[
    \pr(\tau_{\PGr}>m)\le 2 \zeta e^{-\lambda}=O(q^r e^{-\log\zeta -f/2\zeta})=O(\exp(-f/2\zeta+O(1)))=o(1),
    \]
    as $\log \zeta = r\log q+O(1)$.
    
For part (b), the upper bound immediately follows from part (a). For the lower bound, fix $\varepsilon>0$ and we prove that if $r=\omega(\log n)$ then a.a.s.\ $\tau_{\PGr}\ge (1-\varepsilon)\zeta\log \zeta$. Set $m=(1-\varepsilon)\zeta\log \zeta$. By Lemma~\ref{full row rank}, we may assume that $A_m$ has rank $n$. For each $J\subseteq [m]$ where $|J|=n-r$, let $X_J$ be the indicator variable that $A_J$ has rank $n-r$, and the contraction of columns in $J$ produces a matroid that contains $PG(r-1,q)$ as a minor. Let $X=\sum_{J} X_J$ over all such subsets $J$. 
We claim that for every $J$, 
\begin{equation}
\ex X_J\le 2\exp(-\zeta^{\varepsilon}). \label{eq:XJ}
\end{equation}
Then,
$$
\ex X=2\binom{m}{n-r} \exp(-\zeta^{\varepsilon})\le \exp\left(n\log m -\zeta^{\varepsilon}\right)=\exp(-\zeta^{\varepsilon}+O(nr))=o(1),
$$
where the last equation above holds as $r=\omega(\log n)$. The lower bound for (b) follows by the Markov inequality. It  only remains to prove~\eqn{eq:XJ}. \ss

    \noindent {\em Proof of~\eqn{eq:XJ}.~} Similarly as before, the rank of $A_m$ is a.a.s.\ $n$ and the contraction of columns in $J $ where $|J|=n-r$ produces a matrix $[I_{r\times r} \ \ B]$, where each column of $B$
is a uniform random vector in $\FF_q^r$, provided that $A_J$ has rank $n-r$. Moreover, $B$ has $b=m-(n-r) \le (1-\varepsilon)\zeta\log\zeta$ columns. By Lemma~\ref{Poisson} (with $\lambda=b/\zeta\le (1-\varepsilon)\log\zeta$),
$$
\pr(X_J=1)\le 2 \exp\big(-\zeta e^{-(1-\varepsilon)\log \zeta }\big)=2\exp(-\zeta^\varepsilon).
$$    
    
Finally, for part (c), the upper bound is again implied by part (a), noticing that $\zeta\log \zeta=o(n)$ for the range of $r$ in part (c), and the fact that $PG(r-1,q)$ contains every $\FF_q$-representable minors of rank $r$. The lower bound follows since a.a.s.\ $A_m$ is a free matroid if $m-n\to-\infty$ and thus a.a.s.\ $\tau_{\Nminor}\ge n-\omega(1)$. \qed

\section{Circuits}
\label{sec:circuits}

\noindent {\em Proof of Corollary~\ref{cor:circuits}.~} For (a), set $m=cq^{n/k}$ where $c>0$ is fixed. Then by Theorem \ref{thm:old}(a), 
    $$\Pr(M[A_m]\ \text{has no $k$-circuits})\sim\exp\left(-\frac{(q-1)^{k-1}c^k}{k!}\right).$$
    Moreover, the above probability tends to 1 if $c\to0$,  and tends to 0 if $c\to\infty$.
    Therefore, $\tau_{\kCircuit}=\Theta_p(q^{n/k})$.

For (b), assume that $k,m\to\infty$ and $k=o(m)$. Let $\mu_k$ be as defined in Theorem~\ref{thm:old}. Then, 
\begin{eqnarray}
\log \mu_k &=&  \log\tbinom{m}{k} + k\log(q-1)-n\log q, \label{eq:log_mu}
\end{eqnarray}
where, by Stirling's formula,
    \begin{align}
        \log\tbinom{m}{k} &= m\log m-k\log k-(m-k)\log(m-k)+o(1) \nonumber\\
&=m\log m - k(\log m+\log k/m) - (m-k)(\log m + \log (1-k/m))+o(1)\nonumber\\
        &= k\log\big(\tfrac{m}{k}\big)+(m-k)\frac{k}{m}+O(k^2/m)=k\log\big(\tfrac{em}{k}\big) +o(k). \label{eq:log_binomial}
    \end{align}
Fix $c>0$. Setting $m=c\tfrac{1}{e(q-1)}kq^{n/k}$, we find that $k=o(m)$ and thus by~\eqn{eq:log_mu} and~\eqn{eq:log_binomial} we obtain
    $$\log \mu_k  = k\log(cq^{n/k})+o(k)-n\log q = k\log c+o(k).$$
It follows now that $\mu_k=o(1)$ if $c<1$ and $\mu_k=\omega(1)$ if $c>1$.
Thus, part (b) follows by Theorem \ref{thm:old}(b). 
    
    For part (c), we need the following claim about the function $g_a(y)$.
    
\begin{claim}\label{unique root}
    For every $0<a\le 1$, there exists a unique $b$ such that $g_a(b)=0$.
    Moreover, $g'_a(b)>0$. 
\end{claim}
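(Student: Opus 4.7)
The plan is to reduce the claim to a standard intermediate-value-plus-monotonicity argument by computing $g_a'(y)$ directly. Differentiating term by term (and noting the $a\log(q-1)-a\log a-\log q$ piece is constant in $y$), one gets
\[
g_a'(y)=(\log y+1)-(\log(y-a)+1)=\log\frac{y}{y-a}
\]
for every $y>a$. This is strictly positive, and in fact $g_a'(y)\to+\infty$ as $y\downarrow a$ and $g_a'(y)\downarrow 0$ as $y\to\infty$, so $g_a$ is strictly increasing on $(a,\infty)$. In particular, once a root exists, it is unique, and the derivative condition $g_a'(b)>0$ is automatic.

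Next I would check the sign of $g_a$ at the endpoints of $[a,\infty)$. At $y=a$, using the stated convention $0\log 0=0$, three terms cancel and we obtain $g_a(a)=a\log(q-1)-\log q$. Since $0<a\le 1$ and $q\ge 2$, we have $a\log(q-1)\le\log(q-1)<\log q$, so $g_a(a)<0$. For the behaviour at infinity, write
\[
y\log y-(y-a)\log(y-a)=a\log y-(y-a)\log\!\Big(1-\tfrac{a}{y}\Big)=a\log y+a+O(a^2/y),
\]
which gives $g_a(y)=a\log y+a+a\log(q-1)-a\log a-\log q+o(1)\to+\infty$. Combined with continuity of $g_a$ on $[a,\infty)$ (which is part of the definition), the intermediate value theorem supplies a root $b$, and strict monotonicity forces it to be the unique one; since $b>a$ (because $g_a(a)<0$), the formula above for $g_a'$ immediately gives $g_a'(b)>0$.

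I do not expect any real obstacle here: the whole argument is one differentiation, one endpoint evaluation, and one asymptotic expansion. The only place a small amount of care is required is in justifying $b>a$ strictly, so that $g_a'(b)$ is well defined and positive; this is exactly what the strict inequality $g_a(a)<0$ gives us.
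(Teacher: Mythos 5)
Your proposal is correct and follows essentially the same route as the paper: compute $g_a'(y)=\log\frac{y}{y-a}>0$ on $(a,\infty)$, note $g_a(a)=a\log(q-1)-\log q<0$ and $g_a(y)\to\infty$ as $y\to\infty$, and conclude existence and uniqueness of the root with $g_a'(b)>0$. The extra asymptotic expansion you give at infinity is just a more explicit version of the paper's statement that $\lim_{y\to\infty}g_a(y)=\infty$.
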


    Let $b$ be the unique root of $g_a(y)$.
    Set $m=cbn$ for some fixed $c>0$.
    By Stirling's formula and a similar calculation as before, provided that $cb>a$,
    \begin{eqnarray*}
    \log \mu_k &=& - k\log (k/m) - (m-k) \log (1-k/m)  + k\log(q-1)-n\log q+ O(\log n)  \\
    &\sim& n\left(-a\log a +cb\log cb -(cb-a)\log(cb-a)+a\log(q-1)-\log q\right)=ng_a(cb).
    \end{eqnarray*}
    By the definition of $b$ and Claim~\ref{unique root}, $\mu_k=o(1)$ if $c<1$ and $\mu_k=\omega(1)$ if $c>1$. Part (c) follows. It only remains to prove Claim~\ref{unique root} and verify that $b>a$ (hence $cb>a$ for $c=1\pm\eps$ for every sufficiently small $\eps>0$). 

\noindent {\em Proof of $b>a$.~} This follows immediately from the facts that $g_a(y)$ is defined on $y\ge a$ and that $g_a(a)=a\log(q-1)-\log q < 0$. 

\noindent {\em Proof of Claim~\ref{unique root}.~} We find that $g'_a(y)=\log y-\log(y-a)>0$ for all $y>a$. Moreover, $\lim_{y\to\infty} g_a(y)=\infty$, and we have shown that $g_a(a) < 0$. It follows that $g_a(y)$ has a unique root $b>a$. \qed \medskip

\noindent {\em Proof of Corollary~\ref{cor2:circuits}.\ } Part (b) follows by Remark~\ref{remark:g} that $b(1)<2$ (the proof of Remark~\ref{remark:g} is given in the Appendix). For part (a), let $a^*=1-q^{-1}$ and let $b=b(a^*)=1$. By Remark~\ref{remark:g} and Corollary~\ref{cor:circuits}(c), a.a.s.\ at some step $m=(1+o(1))n$, $M[A_m]$ has a circuit whose length is asymptotic to $a^*n$. It remains to show that a.a.s.\ the first circuit cannot have length that is not asymptotic to $a^* n$. Fix $\eps>0$. We prove that there exists $c=c(\eps)>0$ such that  a.a.s.\ there is no circuit of length greater than $(a^*+\eps)n$ or shorter than $(a^*-\eps)n$ by step $m=(1+c)n$. Note that 
the expected number of circuits of length $k$ in $M[A_m]$ is asymptotic to $\mu_k$ given in Theorem~\ref{thm:old}(b). For every $k\ge (a^*+\eps)n$ or $k\le (a^*-\eps)n$, let $a_k=\lim_{n\to\infty} k/n$ (without loss of generality we may assume that $a_k$ exists by the subsubsequence principle) and let $b_k$ be the unique root of $g_{a_k}(y)$. By the condition on $k$ and since $b$ is strictly convex by Remark~\ref{remark:g}, it follows that $b_k>b(a^*)+\delta=1+\delta$ for some fixed $\delta=\delta(\eps)>0$. Let $m=(1+\delta/2)n= c' b_k n $ for some $c'\le 1-\delta/4$. We have shown in the previous proof that $\log \mu_{k}\sim ng_{a_k}(c'b_k)< -c''n$ for some fixed $c''=c''(\eps)>0$, as $c'<1$. Hence, the probability that $M[A_m]$ has a circuit of length greater than $(a^*+\eps)n$ or shorter than $(a^*-\eps)n$ is at most $ne^{-c''n}=o(1)$ by the union bound (over all $k$ such that $k\ge (a^*+\eps)n$ and $k\le (a^*-\eps)n$) and the Markov inequality. \qed

\section{Connectivity}
\label{sec:cconectivity}

\begin{lemma}[Proposition 3 of~\cite{cunningham}] \label{monotonicity}
    If $M=(E,\mc I)$ contains a vertically $k$-connected submatroid with the same rank as $M$, then $M$ is vertically $k$-connected.
\end{lemma}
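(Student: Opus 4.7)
The plan is to argue the contrapositive: any vertical $(k-1)$-separation of $M$ yields a vertical $j'$-separation of $N$ with $j' \le k-1$, contradicting the vertical $k$-connectivity of $N$.

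Suppose $(A_1, A_2)$ is a vertical $j$-separation of $M$ with $j \le k-1$, and set $B_i := A_i \cap E(N)$ for $i=1,2$. Since $N$ is a restriction of $M$, rank in $N$ agrees with rank in $M$ on subsets of $E(N)$, and since $\rank(N) = \rank(M)$, the ground set $E(N)$ spans $M$. Writing $\lambda_N(B_1) = \rank(B_1) + \rank(B_2) - \rank(N)$ and $\lambda_M(A_1) = \rank(A_1) + \rank(A_2) - \rank(M)$, monotonicity of rank gives $\rank(B_i) \le \rank(A_i)$, so
\[
0 \le \lambda_N(B_1) \le \lambda_M(A_1) \le j-1,
\]
where the lower bound uses submodularity applied to $B_1 \cup B_2 = E(N)$.

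Next I rule out the degenerate case in which some $\rank(B_i)$ equals $\rank(N)$. If $\rank(B_1) = \rank(N)$ then $B_1 \subseteq A_1$ already spans $M$, forcing $\rank(A_1) = \rank(M)$ and hence $\lambda_M(A_1) = \rank(A_2) \ge j$, contradicting $\lambda_M(A_1) \le j-1$; the case $\rank(B_2) = \rank(N)$ is symmetric. Hence $\rank(B_i) < \rank(N)$ for both $i$, and rearranging the definition of $\lambda_N$ gives
\[
\rank(B_i) = \lambda_N(B_1) + \bigl(\rank(N) - \rank(B_{3-i})\bigr) \ge \lambda_N(B_1) + 1.
\]

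Setting $j' := \lambda_N(B_1) + 1$, the partition $(B_1, B_2)$ of $E(N)$ satisfies $\min\{\rank(B_1), \rank(B_2)\} \ge j'$ and $\lambda_N(B_1) = j' - 1$, so it is a vertical $j'$-separation of $N$. Since $j' \le j \le k-1$, this contradicts $\kappa(N) \ge k$. The main obstacle I anticipated was that restricting the $M$-separation to $E(N)$ might yield a partition whose smaller side has rank far less than $k-1$; the key observation resolving this is that any such collapse would force the complementary side to span all of $N$, which already violates the original $M$-separation condition.
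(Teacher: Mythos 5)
Your proof is correct. The chain $0\le\lambda_N(B_1)\le\lambda_M(A_1)\le j-1$ is justified (submodularity for the lower bound; $\rank(N)=\rank(M)$ and monotonicity of rank for the middle inequality), the exclusion of the case $\rank(B_i)=\rank(N)$ is sound, and taking $j'=\lambda_N(B_1)+1$ indeed produces a vertical $j'$-separation of $N$ with $j'\le j\le k-1$; in particular both $B_i$ are automatically nonempty since their ranks are at least $1$. Your route is genuinely different from the paper's. The paper proves the one-element statement: if $\rank(M\setminus e)=\rank(M)$ and $(X,Y)$ is a vertical $\ell$-separation of $M$ with $e\in X$, then $(X\setminus\{e\},Y)$ is a vertical $\ell$- or $(\ell-1)$-separation of $M\setminus e$; the lemma then follows by deleting the elements of $E(M)\setminus E(N)$ one at a time, each deletion preserving rank because $E(N)$ spans $M$. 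That version needs only the trivial fact that removing a single element drops the rank of one side by at most one, but it rests on an implicit induction over the deleted elements. Your argument restricts the separation to $E(N)$ in one shot, so it must cope with an arbitrarily large rank drop on each side; this is exactly what your recalibration of the order to $\lambda_N(B_1)+1$, together with the observation that a side whose trace spans $N$ would already contradict the original separation, accomplishes. The trade-off: the paper's step is more elementary but inductive, while yours is a single direct argument via submodularity and the degenerate-case analysis, and it makes explicit that the induced separation of $N$ has order at most that of the original separation of $M$.
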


\begin{proof}
    Let $e\in E$ such that $M\setminus e$ has the same rank as $M$.
    That is, $\rank(E\setminus\{e\})=\rank(E)$.
    It suffices to show that if $M$ is not vertically $k$-connected, then neither is $M\setminus e$.
    Therefore, suppose $M$ has a vertical $\ell$-separation $(X,Y)$ for some $\ell<k$.
    That is, $\rank(X)+\rank(Y)\le\rank(E)+\ell-1$ and $\rank(X),\rank(Y)\ge\ell$.
    Without loss of generality we may assume that $e\in X$.
    Let $X'=X\setminus\{e\}$.
    If $\rank(X')=\rank(X)$, then $(X',Y)$ is a vertical $\ell$-separation of $M\setminus e$.
    If $\rank(X')=\rank(X)-1$, then $(X',Y)$ is a vertical $(\ell-1)$-separation of $M\setminus e$.
    In either case, $M\setminus e$ is not vertically $k$-connected. \qed
\end{proof}
\ss

\noindent {\em Proof of Theorem~\ref{thm:monotone}(a).~} Let $f\to\infty$ be a slowly growing function of $n$. Obviously, a.a.s.\ $\kappa(M[A_m])=1$ for steps $m\le n-f$, as $M[A_m]$ is a free matroid. We leave it as an easy exercise that a.a.s.\ $\kappa(M[A_m])$ remains one for all $n-f\le m\le n+f$. Finally, we know that a.a.s.\ the rank of $A_m$ is $n$ for $m=n+f$. Combining all,  a.a.s.\ $\kappa(M[A_m])=1$ for all $m\le n+f$, and $\kappa(M[A_m])$ is non-decreasing for all $m\ge n+f$ by Lemma~\ref{monotonicity}. \qed\ss

\noindent {\em Proof of Theorem~\ref{sublinear connectivity}.~} We first prove the upper bound in part (a) by extending the proof of Kelly and Oxley~\cite[Theorem 4.5]{kelly1984random}.  Fix $\eps>0$.   Set $m=n+(1+2\varepsilon)k\log_q (n/k)$. Let $f=\omega(1)$ be a slowly growing function of $n$. Let $D$ be the first $n-f$ columns of $A_m$. Let $\E_{\ell}$ be the event that $M[A_m]$ is vertically $\ell$-separated, $\rank(A_m)=n$, and all columns of $D$ are independent. Since a.a.s.\ all columns of $D$ are linearly independent and $\rank(A_m)=n$, we have
\begin{equation}
\pr(M[A_m]\ \text{is not vertically $k$-connected})\le o(1)+\sum_{\ell\le k-1}\pr(\E_{\ell}). \label{eq:prob_connect}
\end{equation}
The probability of $\E_{\ell}$ was upper bounded by Kelly and Oxley in the following lemma.

\begin{lemma}\label{lemma 4.7} (\cite[Lemma 4.7]{kelly1984random})  
 $\pr\big(\E_{\ell}\big) \le \sum_{j=\ell}^{\lfloor\frac{1}{2}(n+\ell-1)\rfloor} b(\ell,j)$ where
    
                   $$  \binom{m-|D|}{n+\ell-1-|D|}\binom{n+\ell-1}{j}\left(\frac{q^j+q^{n+\ell-1-j}-q^{\ell-1}}{q^n}\right)^{m-(n+\ell-1)}.$$
\end{lemma}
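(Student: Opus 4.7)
The plan is to prove the lemma via a union bound over column ``witnesses'' for the vertical $\ell$-separation. The geometric picture is that any vertical $\ell$-separation $(A_1,A_2)$ with $\rank(A_m)=n$ corresponds to a pair of subspaces $V_1,V_2\subseteq\FF_q^n$ whose union contains every column of $A_m$ and whose intersection has dimension $\rank(A_1)+\rank(A_2)-n$.

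First, I would show that on $\E_\ell$, one can extract a triple $(B,B_1,B_2)$ where $B\subseteq[m]$ has size $n+\ell-1$, $B\supseteq D$, and $B=B_1\sqcup B_2$ with $|B_1|=j$ for some $\ell\le j\le \lfloor(n+\ell-1)/2\rfloor$, such that the columns indexed by $B_1$ and $B_2$ are each linearly independent, their spans $V_1:=\spn(B_1)$ and $V_2:=\spn(B_2)$ satisfy $V_1+V_2=\FF_q^n$, and every column of $A_m$ outside $B$ lies in $V_1\cup V_2$. Indeed, given a vertical $\ell$-separation $(A_1,A_2)$ --- after adjusting so that $\rank(A_1)+\rank(A_2)=n+\ell-1$ --- one takes $B_i\subseteq A_i$ to be a basis of $\spn(A_i)$ that contains $D\cap A_i$ (which is independent in $A_i$ because $D$ is independent, and $D\subseteq A_1\cup A_2$). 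By symmetry we may assume $|B_1|\le|B_2|$, which combined with $|B_1|=\rank(A_1)\ge\ell$ forces $j$ into the stated range.

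Next I would apply the union bound. The columns of $B$ outside $D$ can be chosen from the $m-|D|$ columns outside $D$ in $\binom{m-|D|}{n+\ell-1-|D|}$ ways, and the partition $B=B_1\sqcup B_2$ with $|B_1|=j$ contributes $\binom{n+\ell-1}{j}$ ways. For a fixed triple, the $m-(n+\ell-1)$ columns of $A_m$ outside $B$ are i.i.d.\ uniform on $\FF_q^n$ and independent of the columns indexed by $B$. Conditioning on the values of the $B$-columns, inclusion-exclusion together with the linear-independence and spanning conditions gives
\[
|V_1\cup V_2|=q^{\dim V_1}+q^{\dim V_2}-q^{\dim(V_1\cap V_2)}=q^j+q^{n+\ell-1-j}-q^{\ell-1},
\]
where $\dim V_1=j$, $\dim V_2=n+\ell-1-j$, and $\dim(V_1\cap V_2)=j+(n+\ell-1-j)-n=\ell-1$ since $V_1+V_2=\FF_q^n$. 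Hence each of the remaining columns lies in $V_1\cup V_2$ with probability at most $(q^j+q^{n+\ell-1-j}-q^{\ell-1})/q^n$, and the joint probability of all $m-(n+\ell-1)$ such events is at most this quantity raised to the power $m-(n+\ell-1)$. Multiplying the three factors and summing over $j$ yields $\pr(\E_\ell)\le\sum_j b(\ell,j)$.

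The main technical obstacle is the adjustment step at the beginning: extracting the column witness of exact size $n+\ell-1$ when the underlying separation has $\rank(A_1)+\rank(A_2)<n+\ell-1$ (equivalently $\lambda:=\rank(A_1)+\rank(A_2)-n<\ell-1$). One can handle this either by a direct reshuffling argument that increases the rank-sum while preserving $\min(\rank(A_i))\ge\ell$, or by decomposing $\E_\ell$ according to the value of $\lambda$, bounding each piece by an analogue of $b(\ell,j)$ with $\ell-1$ replaced by $\lambda$, and verifying that the resulting double sum is dominated by the $\lambda=\ell-1$ contribution — the key monotonicity being that enlarging $\lambda$ enlarges $q^{n+\ell-1-j}-q^{\ell-1}$ faster than the other factors shrink.
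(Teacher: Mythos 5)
The paper never proves this lemma; it is imported verbatim from Kelly and Oxley, so there is no in-paper argument to compare against. Your core computation is the standard (and surely the intended) one: a union bound over column witnesses $B=B_1\sqcup B_2\supseteq D$ of size $n+\ell-1$, with $\binom{m-|D|}{n+\ell-1-|D|}$ choices of $B$, $\binom{n+\ell-1}{j}$ choices of the split, and, conditionally on the $B$-columns, each of the remaining $m-(n+\ell-1)$ i.i.d.\ columns landing in $\spn{B_1}\cup\spn{B_2}$ with probability $(q^j+q^{n+\ell-1-j}-q^{\ell-1})/q^n$, using $\dim(\spn{B_1}\cap\spn{B_2})=\ell-1$ when the rank-sum is exactly $n+\ell-1$ and $\rank(A_m)=n$. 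That part, including the extraction of bases $B_i\supseteq D\cap A_i$ and the reduction to $j\le\lfloor(n+\ell-1)/2\rfloor$ by symmetry, is correct and complete.

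The genuine gap is exactly the step you flag and leave open: separations with $\rank(A_1)+\rank(A_2)<n+\ell-1$, and neither of your proposed repairs is sound as stated. Single-element reshuffling increases the rank-sum only when the moved element is spanned by the rest of its own side but not by the other side; such an element need not exist at a given partition, and one must simultaneously keep both ranks at least $\ell$, so the procedure can stall. Decomposing by $\lambda=\rank(A_1)+\rank(A_2)-n$ yields a double sum over $(\lambda,j)$, which even after a termwise domination argument gives the stated bound only up to an extra factor (of order $\ell$), i.e.\ a weaker inequality than the lemma. The clean fix is to notice that your union-bound event never needs $B_1,B_2$ to be independent: take bases $B_i$ of $\spn{A_i}$ containing $D\cap A_i$ and pad $B$ with $\ell-1-\lambda$ arbitrary further columns (possible since $m\ge n+\ell-1$ in the relevant range), assigning the padding so that the smaller part has size $j\ge\ell$. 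Writing $d_i=\dim\spn{B_i}$, one has $d_1\le j$, $d_2\le n+\ell-1-j$, $\spn{B_1}+\spn{B_2}=\FF_q^n$, and $|\spn{B_1}\cup\spn{B_2}|=q^{d_1}+q^{d_2}-q^{d_1+d_2-n}$, which is nondecreasing in each $d_i$; hence it is at most $q^{j}+q^{n+\ell-1-j}-q^{\ell-1}$ and the same probability factor applies. (Alternatively, for the use made of the lemma in the connectivity theorem one may restrict to $\ell=\kappa(M[A_m])$, where a separation with rank-sum exactly $n+\ell-1$ must exist, since a slack one would be a vertical $(\lambda+1)$-separation with $\lambda+1<\ell$.) With either repair spelled out your argument is a valid proof; as submitted, the lemma is only established in the tight case.
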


We will prove the following claim.
\begin{claim}\label{claim:b}
$\pr(M[A_m]\ \text{is not vertically $k$-connected})\le O(nk\cdot b(k-1,k-1))+o(1).$
\end{claim}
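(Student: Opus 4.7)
The plan is to bound the double sum from Lemma~\ref{lemma 4.7} by showing that $b(\ell,j) = O(b(k-1,k-1))$ uniformly for every valid $(\ell,j)$ with $1\le\ell\le k-1$ and $\ell\le j\le\lfloor(n+\ell-1)/2\rfloor$. Since the total number of such pairs is $O(nk)$, this will give $\sum_{\ell\le k-1}\pr(\E_\ell)=O(nk\cdot b(k-1,k-1))$ via Lemma~\ref{lemma 4.7}, and the claim follows from~\eqn{eq:prob_connect}; the $o(1)$ term there absorbs the a.a.s.\ guarantees that $D$ has linearly independent columns and $\rank(A_m)=n$, both provided by Lemma~\ref{full row rank}.

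The uniform bound would be proved in two monotonicity steps. First, for fixed $\ell$, set $p_\ell(j) := (q^j+q^{n+\ell-1-j}-q^{\ell-1})/q^n$. The function $p_\ell$ is convex and symmetric about $j=(n+\ell-1)/2$, with boundary value $p_\ell(\ell)=q^{-1}(1+O(q^{\ell-n}))$. The ratio
\begin{equation*}
\frac{b(\ell,j+1)}{b(\ell,j)} = \frac{n+\ell-1-j}{j+1}\left(\frac{p_\ell(j+1)}{p_\ell(j)}\right)^{m-(n+\ell-1)}
\end{equation*}
is at most $1$ throughout $j \le \lfloor(n+\ell-1)/2\rfloor-1$: the binomial prefactor is at most $n$, while $p_\ell(j+1)/p_\ell(j) \le q^{-1+o(1)}$ away from the midpoint, and the exponent $m-(n+\ell-1) = \Omega(k\log_q(n/k))$ easily swamps any polynomial in $n$. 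This yields $b(\ell,j) = O(b(\ell,\ell))$. Second, the ratio $b(\ell+1,\ell+1)/b(\ell,\ell)$ factors as
\begin{equation*}
\frac{(m-n-\ell+1)(n+\ell)}{(n+\ell-|D|)(\ell+1)} \cdot \frac{p_{\ell+1}(\ell+1)^{m-(n+\ell)}}{p_\ell(\ell)^{m-(n+\ell-1)}}.
\end{equation*}
Using $p_\ell(\ell) = q^{-1}(1+o(1))$ for $\ell=o(n)$, the second factor is $q(1+o(1))$, while the first is at least $\Omega((n/k)\log_q(n/k))$ since $m-n=(1+2\eps)k\log_q(n/k)$, $|D|=n-f$ with $f=o(n)$, and $\ell\le k-2$. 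The product is $\omega(1)$, so iterating yields $b(\ell,\ell)\le b(k-1,k-1)$ for every $\ell\le k-1$.

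The main obstacle is the first step for $j$ close to the midpoint $(n+\ell-1)/2$, where neither the binomial prefactor nor the probability ratio in the displayed ratio of consecutive $b(\ell,j)$'s is individually small. The key inputs for managing this are the convexity of $p_\ell$ (which guarantees $p_\ell(j)\le p_\ell(\ell)$ across the whole range) and the lower bound $m-(n+\ell-1)\ge(1+\eps)k\log_q(n/k)$ coming from the choice of $m$. A direct telescoping comparison using the estimate $p_\ell(\mathrm{mid})/p_\ell(\ell) = O(q^{(\ell-n)/2})$ shows that $(p_\ell(j)/p_\ell(\ell))^{m-(n+\ell-1)}$ is small enough to overwhelm the total binomial growth $\binom{n+\ell-1}{j}/\binom{n+\ell-1}{\ell} \le 2^{n+\ell-1}$ as soon as $m-(n+\ell-1)$ exceeds a small constant, which our hypothesis $k\to\infty$ and $k=o(n)$ ensures. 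Once this middle regime is under control, the two monotonicity steps combine to give the uniform bound $b(\ell,j) = O(b(k-1,k-1))$, and the claim follows.
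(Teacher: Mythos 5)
Your proposal is correct in substance and shares the paper's overall architecture: apply \eqn{eq:prob_connect} and Lemma~\ref{lemma 4.7}, show the uniform bound $b(\ell,j)=O(b(k-1,k-1))$ via (i) domination in $j$ by the diagonal term $b(\ell,\ell)$ and (ii) monotonicity of $b(\ell,\ell)$ in $\ell$, then multiply by the $O(nk)$ count of pairs, with the $o(1)$ absorbing the a.a.s.\ rank events. The only real divergence is in step (i) near the midpoint $j\approx(n+\ell-1)/2$, which you flag as the main obstacle and handle by a two-regime argument: the per-step bound $p_\ell(j+1)/p_\ell(j)\le q^{-1+o(1)}$ away from the midpoint, plus a crude telescoping comparison $b(\ell,j)/b(\ell,\ell)\le 2^{n+\ell-1}\bigl(p_\ell(j)/p_\ell(\ell)\bigr)^{m-(n+\ell-1)}$ for $j$ near the midpoint, where $j-\ell=\Theta(n)$ makes the probability decay beat $2^{n+\ell-1}$ once $m-(n+\ell-1)$ exceeds a constant. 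This works (the two regimes overlap, since the per-step bound only fails within $O(\log n)$ of the midpoint while the telescoping covers all $j$ within, say, $n/8$ of it), but note that as literally written your second paragraph claims the consecutive ratio is at most $1$ ``throughout'' while justifying it only away from the midpoint; the patch in your last paragraph is what makes the argument sound. The paper avoids the case split entirely with one algebraic observation: for every $j<\lfloor(n+\ell-1)/2\rfloor$ the consecutive probability ratio satisfies $p_\ell(j)/p_\ell(j+1)\ge (q^2+1)/(2q)>1$ (an exact inequality equivalent to $(q^2-1)q^{n+\ell-2-j}+q^{\ell-1}(q-1)^2\ge(q^2-1)q^{j+1}$), so $b(\ell,j)/b(\ell,j+1)\ge \bigl((q^2+1)/2q\bigr)^{m-n-k}/n\ge 1$ uniformly, because $m-n-k=\omega(\log n)$; this makes $b(\ell,j)$ genuinely decreasing in $j$ with no midpoint regime to patch. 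Your step (ii) via the consecutive ratio $b(\ell+1,\ell+1)/b(\ell,\ell)$ is essentially equivalent to the paper's factorization of $b(\ell,\ell)$ into three factors each increasing in $\ell$ (just be a little careful that the denominator involves $\ell+f$ rather than $\ell$ alone, which is harmless since $f$ can be chosen to grow slowly). In short: your route is valid and completable, at the cost of a regime split that the paper's sharper constant bound on the ratio renders unnecessary.
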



\noindent {\em Proof of Claim~\ref{claim:b}.~}
By~\eqn{eq:prob_connect} and Lemma \ref{lemma 4.7}, 
 it suffices to show that $b(\ell,j)$ is maximized at $(\ell,j)=(k-1,k-1)$.
    Fix $1\le \ell\le k-1$ and $\ell\le j<\lfloor\frac{1}{2}(n+\ell-1)\rfloor$.
    Then $$\frac{b(\ell,j)}{b(\ell,j+1)}=\frac{j+1}{n+\ell-1-j}
        \left(\frac{q^j+q^{n+\ell-1-j}-q^{\ell-1}}{q^{j+1}+q^{n+\ell-2-j}-q^{\ell-1}}\right)^{m-(n+\ell-1)}.$$
    Let $A$ be the fraction inside the parentheses above, and let $B=(q^2+1)/2q$. We first prove that $A\ge B$. Note that $A\ge B$ if and only if
    \begin{align*}
& 2q^{j+1}+2q^{n+\ell-j}-2q^{\ell} \ge q^{j+3}+q^{n+\ell-j}-q^{\ell+1}+q^{j+1}+q^{n+\ell-2-j}-q^{\ell-1},
\end{align*}
which holds if and only if
\[
   (q^2-1)q^{n+\ell-2-j}+q^{\ell-1}(q-1)^2 \ge (q^2-1)q^{j+1}.
  \]
    Since $j<(n+\ell-1)/2$, we have $n+\ell-2-j \ge j+1$. This verifies that $A\ge B$.
    Therefore, $$\frac{b(\ell,j)}{b(\ell,j+1)} \ge \frac{A^{m-(n+\ell-1)}}{n} \ge \frac{B^{m-n-k}}{n}\ge1,$$
    where the last inequality holds by the definition of $B$ and by the setting of $m$.
    Thus, $b(\ell,j)$ is decreasing in $j$.
    Next, write $b(\ell,\ell)=XYZ$, where
    $$X=\binom{m-n}{\ell-1},\quad Y=\binom{n+\ell-1}{n-1},\quad 
        Z=\left(\frac{q^\ell+q^{n-1}-q^{\ell-1}}{q^n}\right)^{m-n-\ell+1}.$$
    Then $Y$ is increasing in $\ell$. Since $2\ell<2k\le m-n$, so is $X$.
    Also, $Z$ is increasing in $\ell$ since its base is less than one and increasing in $\ell$, and its exponent is decreasing in $\ell$.
    Therefore, $b(\ell,\ell)$ is increasing in $\ell$, so $b(\ell,j)\le b(\ell,\ell)\le b(k-1,k-1)$, as required.\qed
\ss

    Observe that
     \begin{align*}
        b(k-1,k-1) 
        &= \binom{m-n+f}{k-2+f}\binom{n+k-2}{k-1}\left(\frac{q^{k-1}+q^{n-1}-q^{k-2}}{q^n}\right)^{m-n-k+2} \\
        &\le \left(\frac{4k\log_q(n/k)}{k}\right)^{k+f}\left(\frac{4n}{k}\right)^k
            \left(\frac{1+q^{k-n}}{q}\right)^{(1+\varepsilon)k\log_q(n/k)},
    \end{align*}
    where to derive the last inequality above we used the fact that $m-n-k+2=(1+2\eps)k\log_q(n/k)-k+2\ge (1+\eps)k\log_q(n/k)$ since $k=o(n)$.
    Therefore, using that $n-k\to\infty$, we have
    \small
    \begin{align*}
        &\log\big[nk\cdot b(k-1,k-1)\big] \\
        &\le \log(nk)+(k+f)\log\log(n/k)+k\log(n/k) +O(k) +(1+\varepsilon)k\log_q(n/k) (-\log q +O(q^{k-n}))\\
        &=-\eps k \log(n/k) +O(\log n + k \log\log (n/k)))
    \end{align*}
    which goes to $-\infty$ since $k=o(n)$ and $k=\omega(1)$.
    Thus, $nk\cdot b(k-1,k-1)$ tends to zero. By Claim~\ref{claim:b}, a.a.s.\ $M[A_m]$ is vertically $k$-connected.
    \ss

    Next we prove the lower bounds in part (a) and (b) by the second moment method. A straight application of the second moment method to all possible separations would lead to failure due to heavy correlations. Instead we carefully craft the counting structures that imply the existence of a certain type of vertical $(k-1)$-separations.   
    For a pair $(I,S)$, 
    where $I$ is a subset of $k-1$ columns of $A_{m}$ and $S$ is an $(n-1)$-dimensional subspace of $\FF_q^n$,
    define $X_{I,S}$ to be the indicator variable that
    \begin{enumerate}[(i)]
    \item $I\subseteq S^c$, and all columns of $I$ are linearly independent, and 
    \item All column vectors in $A_m\setminus I$ are in $S$, and $\rank(A_m\setminus I)\ge k-1$.
     \end{enumerate} 
     Let $I^c$ denote $A_m\setminus I$, the set of vectors not in $I$.
    Note that (ii) above implies that $k-1\le \rank(I^c)\le n-1$.
    Thus, $X_{I,S}=1$ for some $(n-1)$-dimensional subspace $S$ immediately implies that $(I,I^c)$ is an $(k-1)$-separation.
    Therefore, it suffices to show that $X:=\sum X_{I,S}$ is a.a.s.\ positive, where the summation is taken over all $(k-1)$-subset of columns of $A_m$ and all $(n-1)$-dimensional subspaces of $\FF_q^n$. 
    
    For each given $(I,S)$, the events in (i) and (ii) are independent. Let $v_1,\ldots, v_{k-1}$ be the column vectors in $I$. For each $1\le i\le k-1$, the probability that $v_i\in \spn{v_1,\ldots,v_{i-1}} \cup S$ conditional on that $v_1,\ldots,v_{i-1}$ are linearly independent, and that none of them are in $S$ is
    \[
   \left\{
   \begin{array}{ll}
    q^{-n}(q^{i-1}+q^{n-1}-q^{i-2}) & \text{if}\ i\ge 2\\
    q^{-1} & \text{if}\ i=1.
    \end{array}
    \right.
    \] 
     Thus,  the probability of event (i) is (provided that $k/n<1-\eps$ for some $\eps>0$)
     \[
    (1- q^{-1})\prod_{i=2}^{k-1}  (1-   q^{-n}(q^{i-1}+q^{n-1}-q^{i-2}) )\sim (1-q^{-1})^{k-1}.
     \] 
    Similarly, the probability of events (ii) is asymptotic to $q^{-(m-k+1)}$. Consequently,
    $$\ex X_{I,S}\sim\mu:= (q-1)^{k-1}q^{-m} \quad\text{and}\quad
    \ex X\sim \binom{m}{k-1}\gbinom{n}{1}_q \mu.$$ 
    For part (a), set $m=n+(1-\varepsilon)k\log_q(n/k)$. For part (b), set $m=(1+\alpha)n$ where $\alpha$ satisfies~\eqn{eq:LB_connect}.
    We first verify that $\log\ex X=\omega(1)$ in both parts.
    Suppose that $k=o(n)$ and $k=\omega(1)$. Then, $m=n+(1-\varepsilon)k\log_q(n/k)$ and hence,
    \begin{align*}
        \log\ex X & = k\log (n/k) + n\log q+(k-1)\log(q-1)-m\log q +O(k+\log n) \\
        &=\eps k \log(n/k) +O(k+\log n)\to\infty,
    \end{align*}
    implying that $\ex X=\omega(1)$.
    Suppose that $k\sim tn$ for some $0<t<1$. Then,  by~\eqn{eq:LB_connect},
    \begin{align*}
        \log\ex X & = \left(t\log\frac{1+\alpha}{t}+(1+\alpha-t)\log\frac{1+\alpha}{1+\alpha-t}+t\log(q-1)-\alpha\log q+o(1)\right)n\to \infty.
    \end{align*}

    Next, we prove that $\ex X(X-1)\le (1+o(1)) (\ex X)^2$.  Consider a pair $(X_{I_i,S_i},X_{I_j,S_j})$.
    Let $h=|I_i\cap I_j|$.
    Note that if $X_{I_i,S_i}X_{I_j,S_j}=1$ then
        $I_i\subseteq S_i^c,\ I_j\subseteq S_j^c,\ I_i^c \subseteq S_i,\ I_j^c \subseteq S_j $. It follows that
        $
         I_i\cap I_j\subseteq (S_i\cup S_j)^c,\ I_i\setminus I_j\subseteq S_j\setminus S_i,\ 
            I_j\setminus I_i\subseteq S_i\setminus S_j$ and  $(I_i\cup I_j)^c\subseteq S_i\cap S_j.$
            
            We further consider two cases.
  If $S_i\ne S_j$, then $\dim(S_i\cap S_j)=n-2$, and so $|S_i\cap S_j|=q^{n-2}$. Thus,
    \begin{align*}
        \ex X_{I_i,S_i}X_{I_j,S_j}
        &\le \pr\big(I_i\cap I_j\subseteq (S_i\cup S_j)^c\big) \pr\big(I_i\setminus I_j\subseteq S_j\setminus S_i\big)
            \pr\big(I_j\setminus I_i\subseteq S_i\setminus S_j\big) \pr\big((I_i\cup I_j)^c\subseteq S_i\cap S_j \big) \\
        &= \left(\frac{|(S_i\cup S_j)^c|}{q^n}\right)^{|I_i\cap I_j|} \left(\frac{|S_j\setminus S_i|}{q^n}\right)^{|I_i\setminus I_j|}
            \left(\frac{|S_i\setminus S_j|}{q^n}\right)^{|I_j\setminus I_i|} \left(\frac{|S_i\cap S_j|}{q^n}\right)^{|(I_i\cup I_j)^c|} \\
        &= (1-2q^{-1}+q^{-2})^h (q^{-1}-q^{-2})^{k-1-h} (q^{-1}-q^{-2})^{k-1-h} (q^{-2})^{m-2(k-1-h)-h} \\
        &= \left(\frac{q-1}{q}\right)^{2h}\left(\frac{q-1}{q^2}\right)^{k-1-h}
            \left(\frac{q-1}{q^2}\right)^{k-1-h}\left(\frac{1}{q^2}\right)^{m-2(k-1-h)-h}
        = \mu^2.
    \end{align*}
    On the other hand, if $S_i=S_j$, then  $X_{I_i,S_i}X_{I_j,S_j}=1$ implies that
$I_i\cap I_j\subseteq S_i^c,\ I_i\setminus I_j\subseteq \emptyset,\ 
            I_j\setminus I_i\subseteq \emptyset,\ (I_i\cup I_j)^c\subseteq S_i$, which implies that 
        $I_i=I_j$ and $X_{I_i,S_i}=1$.
    Therefore, $\ex X_iX_j$ is nonzero only if $i=j$, in which case, $\ex X_{I_i,S_i}X_{I_j,S_j}=\mu$.
    Thus, \begin{align*}
        \ex X^2 &= \sum_{(I_i,S_i), (I_j,S_j)} X_{I_i,S_i}X_{I_j,S_j}
        = \sum_{I_i,I_j,S_i\neq S_j} \ex X_{I_i,S_i}X_{I_j,S_j}+\sum_{I_i=I_j,S_i=S_j} \ex X_{I_i,S_i}X_{I_j,S_j} \\
        &= \binom{m}{k-1}\gbinom{n}{1}\binom{m}{k-1}\left(\gbinom{n}{1}-1\right)\mu^2+\ex X 
        \sim (\ex X)^2.
    \end{align*}
    By Chebyshev's inequality, a.a.s.\ $X>0$, and therefore there exists a vertical $(k-1)$-separation. Now Theorem~\ref{sublinear connectivity} follows by the definition of vertical connectivity and Theorem~\ref{thm:monotone}(a). \qed\ss
    
    \noindent {\em Proof of Theorem~\ref{thm:monotone}(b,c).~} By Theorem~\ref{sublinear connectivity}, $\kappa(M[A_m])=o(n)$ if $m=n+o(n)$. On the other hand, $\text{gir}(M[A_m])=\Theta(n)$ for all $m=n+o(n)$, by Corollary~\ref{cor:circuits}. It is easy to see that a.a.s.\ for every step $m$ after the creation of the first circuit, $A_m$ is not isomorphic to any uniform matroid. Thus, part (b) follows now by Proposition~\ref{p:tutte}.

Part (c) follows by Proposition~\ref{p:tutte}, and the fact that a.a.s.\ $\kappa(M[A_m])$ is monotonely non-decreasing by part (a), and that $\text{gir}(M[A_m])$ is monotonely non-increasing.\qed
    

\section{Critical number}
\label{sec:colouring}

The following two lemmas follow from well known results in counting  subspaces of a vector space (see e.g.\ page 162 of~\cite{oxley2011matroid}). We include proofs for self-containment. Recall that we write $a_n\asymp b_n$ if $a_n=\Theta(b_n)$.
\begin{lemma} \label{gbinom approxoximation} 
    For any $0\le k\le n$, $$\gbinom{n}{k}_q = \gbinom{n}{n-k}_q \asymp q^{k(n-k)},$$
    which is the number of $k$-dimensional (and $(n-k)$-dimensional) subspaces of $\FF_q^n$.
\end{lemma}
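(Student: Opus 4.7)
The plan is to establish three things in order: the counting interpretation of $\gbinom{n}{k}_q$, the symmetry $\gbinom{n}{k}_q = \gbinom{n}{n-k}_q$, and the asymptotic bound $\asymp q^{k(n-k)}$.

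For the counting interpretation, I would count ordered $k$-tuples $(v_1,\dotsc,v_k)$ of linearly independent vectors in $\FF_q^n$ in two ways. Directly, $v_1$ can be any of $q^n-1$ nonzero vectors, and once $v_1,\dotsc,v_{i-1}$ are chosen and span an $(i-1)$-dimensional space, $v_i$ can be any vector outside this span, giving $q^n-q^{i-1}$ choices. So the total is $\prod_{i=0}^{k-1}(q^n-q^i)$. Alternatively, one first picks a $k$-dimensional subspace $W$ (call the count $N_{n,k}$), then picks an ordered basis of $W$ in $\prod_{i=0}^{k-1}(q^k-q^i)$ ways. Dividing, $N_{n,k}=\prod_{i=0}^{k-1}(q^n-q^i)/(q^k-q^i)$, and factoring $q^i$ from numerator and denominator reduces this to $\prod_{i=0}^{k-1}(q^{n-i}-1)/(q^{k-i}-1)=\gbinom{n}{k}_q$, which yields the subspace-counting interpretation. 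The $(n-k)$-dimensional count follows from the bijection $W\mapsto W^\perp$ (with respect to any fixed nondegenerate bilinear form on $\FF_q^n$), or, equivalently, from the symmetry statement below applied to the interpretation.

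For the symmetry, I would manipulate the defining product directly. Multiplying numerator and denominator of $\gbinom{n}{k}_q$ by $\prod_{i=k}^{n-1}(q^{n-i}-1)$ one obtains
\[
\gbinom{n}{k}_q=\frac{\prod_{i=0}^{n-1}(q^{n-i}-1)}{\prod_{i=0}^{k-1}(q^{k-i}-1)\,\prod_{j=0}^{n-k-1}(q^{n-k-j}-1)},
\]
and the right-hand side is patently invariant under $k\leftrightarrow n-k$.

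For the asymptotic bound, I would factor out the dominant power of $q$: for each $i\in\{0,\dotsc,k-1\}$,
\[
\frac{q^{n-i}-1}{q^{k-i}-1}=q^{n-k}\cdot\frac{1-q^{-(n-i)}}{1-q^{-(k-i)}},
\]
so $\gbinom{n}{k}_q=q^{k(n-k)}\prod_{i=0}^{k-1}\frac{1-q^{-(n-i)}}{1-q^{-(k-i)}}$. The numerator of the remaining product lies in $(\prod_{j=1}^{\infty}(1-q^{-j}),1)$, which is a positive constant interval depending only on $q$; the denominator, after reindexing $j=k-i$, equals $\prod_{j=1}^{k}(1-q^{-j})$, which also lies in that same interval. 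Hence the product is bounded above and below by positive constants depending only on $q$, giving $\gbinom{n}{k}_q\asymp q^{k(n-k)}$.

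The only nonroutine part is ensuring that the infinite product $\prod_{j=1}^{\infty}(1-q^{-j})$ converges to a strictly positive constant, which is standard since $\sum_j q^{-j}<\infty$; this makes the constants in $\asymp$ uniform in $k$ and $n$ (depending only on $q$). Apart from this observation, every step is bookkeeping with the explicit product formula.
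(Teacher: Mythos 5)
Your proposal is correct and follows essentially the same route as the paper: the symmetry is read off from the product formula, and the bound $\asymp q^{k(n-k)}$ comes from factoring $q^{n-k}$ out of each term and noting the leftover correction factors are bounded by constants depending only on $q$ (via the convergent product $\prod_{j\ge1}(1-q^{-j})>0$), which the paper leaves implicit. Your additional derivation of the subspace-counting interpretation is fine but not needed, as the paper cites it as standard.
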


\proof By definition,
\[
\gbinom{n}{k}_q=\gbinom{n}{n-k}_q=\prod_{i=0}^{k-1}\frac{q^{n-i}-1}{q^{k-i}-1}\asymp \prod_{i=0}^{k-1}\frac{q^{n-i}}{q^{k-i}} = q^{k(n-k)}.\qed 
\]

\vspace{0.3cm}

\begin{lemma} \label{subspace intersection} 
    Let $S$ be any $k$-dimensional subspace of $\FF_q^n$,
    and let $N(n,k,j,\ell)$ be the number of $j$-dimensional subspaces of $\FF_q^n$ that intersect with $S$ in dimension $\ell$. 
    Then $$N(n,k,j,\ell)=q^{(k-\ell)(j-\ell)}\gbinom{k}{\ell}_q\gbinom{n-k}{j-\ell}_q.$$
    \end{lemma}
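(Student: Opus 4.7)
The plan is to count ordered pairs $(T, L)$ where $T$ is a $j$-dimensional subspace of $\FF_q^n$ and $L = T \cap S$ has dimension exactly $\ell$, by first choosing $L$ and then counting the valid $T$'s containing it.

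First I would observe that $L$ must be an $\ell$-dimensional subspace of the given $k$-dimensional subspace $S$, so by Lemma \ref{gbinom approxoximation} applied inside $S$ there are $\gbinom{k}{\ell}_q$ choices for $L$. Next, for each fixed such $L$, I would count the number of $j$-dimensional subspaces $T \supseteq L$ with $T \cap S = L$ by passing to the quotient $V := \FF_q^n / L$. Under this quotient, $T$ corresponds bijectively to a $(j-\ell)$-dimensional subspace $\bar T$ of $V$, and the condition $T \cap S = L$ translates exactly to $\bar T \cap \bar S = \{0\}$, where $\bar S = S/L$ has dimension $k - \ell$ in the $(n-\ell)$-dimensional space $V$.

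It then suffices to establish the following general claim: the number of $a$-dimensional subspaces of $\FF_q^m$ that intersect a fixed $b$-dimensional subspace $B$ trivially equals $q^{ab} \gbinom{m-b}{a}_q$. I would prove this by counting ordered bases. Picking an ordered basis $(w_1,\ldots,w_a)$ of such a subspace vector by vector: $w_i$ can be any vector in $\FF_q^m$ not in $\spn{B, w_1,\ldots,w_{i-1}}$, which has size $q^{b+i-1}$, giving $q^m - q^{b+i-1}$ choices. Thus the number of ordered bases is
\[
\prod_{i=0}^{a-1}(q^m - q^{b+i}) = q^{ab + \binom{a}{2}} \prod_{i=0}^{a-1}(q^{m-b-i}-1).
\]
Dividing by the number $\prod_{i=0}^{a-1}(q^a - q^i) = q^{\binom{a}{2}}\prod_{i=0}^{a-1}(q^{a-i}-1)$ of ordered bases of any fixed $a$-dimensional subspace yields $q^{ab}\gbinom{m-b}{a}_q$, as claimed.

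Applying this with $(m,a,b) = (n-\ell,\,j-\ell,\,k-\ell)$ gives $q^{(k-\ell)(j-\ell)} \gbinom{n-k}{j-\ell}_q$ subspaces $\bar T$ for each fixed $L$, and multiplying by the $\gbinom{k}{\ell}_q$ choices of $L$ yields the desired formula. The only delicate step is the ordered-basis count in the claim; everything else is bookkeeping with the quotient. I do not expect any serious obstacle.
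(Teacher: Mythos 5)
Your proof is correct and takes essentially the same approach as the paper's: choose the $\ell$-dimensional subspace $L\subseteq S$ in $\gbinom{k}{\ell}_q$ ways, then count the $j$-dimensional subspaces meeting $S$ exactly in $L$ by counting ordered sequences of vectors avoiding a forbidden subspace and dividing by the overcount per subspace. Passing to the quotient $\FF_q^n/L$ is only a cosmetic repackaging of the paper's direct extension of $U$ by vectors outside $S+U_i$.
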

\proof 
There are $\gbinom{k}{\ell}_q$ ways to choose an $\ell$-dimensional subspace $U$ of $S$.

Given $U$, there are $q^n-q^k$ vectors that are not in $S$. Adding any such vector into $U$ results an extension of $U$ into an $(\ell+1)$-dimensional subspace $U_1$ such that $S\cap U_1=U$. Then, there are $q^n-q^{k+1}$ vectors that are not in $S+ U_1$ whose addition extends $U_1$ to an $(\ell+2)$-dimensional subspace $U_2$. Repeat this, and we find that there are  
\[
\prod_{i=0}^{j-\ell-1}(q^n-q^{k+i})
\]
ways to extend $U$ to a $j$-dimensional subspace $W=U_{j-\ell}$ whose intersection with $S$ is $U$. However, by the same counting scheme (by considering vectors in $W\setminus U_i$, $0\le i\le j-\ell-1$ with $U_0=U$ this time), each such $j$-dimensional subspace $W$ can be constructed in $(q^j-q^{\ell})(q^j-q^{\ell+1})\cdots (q^j-q^{j-1})$ ways. It follows now that
$$N(n,k,j,\ell)=\gbinom{k}{\ell}_q\prod_{i=0}^{j-\ell-1}\frac{q^n-q^{k+i}}{q^j-q^{\ell+i}}
=q^{(k-\ell)(j-\ell)}\gbinom{k}{\ell}_q\gbinom{n-k}{j-\ell}_q.\qed$$
    By Lemma~\ref{subspace intersection},
\begin{equation}
N(n,n-k,n-k,n-2k+h)=q^{(k-h)^2}\gbinom{n-k}{n-2k+h}_q\gbinom{k}{k-h}_q=q^{(k-h)^2}\gbinom{n-k}{k-h}_q\gbinom{k}{h}_q.\label{eq:q_binom}
\end{equation}
    By Lemma \ref{gbinom approxoximation}, $$N(n,n-k,n-k,n-2k+h)\asymp q^{(k-h)^2}q^{(k-h)(n-2k+h)}q^{h(k-h)}= q^{(n-k+h)(k-h)}.$$

\begin{lemma} \label{gbinom assymptotic}
  Let $N, M, k$ be positive integers such that $N\ge M\ge k$. Then
    $$\gbinom{N}{k}_q= q^{(N-M)k}\gbinom{M}{k}_q \left(1+O(q^{k-M})\right).$$
\end{lemma}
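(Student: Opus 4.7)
My plan is to prove this by a direct computation using the product form of the Gaussian binomial coefficient, followed by an elementary asymptotic estimate.

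First, I would write out the ratio directly from the definition:
\[
\frac{\gbinom{N}{k}_q}{\gbinom{M}{k}_q} = \prod_{i=0}^{k-1}\frac{q^{N-i}-1}{q^{M-i}-1} = q^{(N-M)k}\prod_{i=0}^{k-1}\frac{1-q^{-(N-i)}}{1-q^{-(M-i)}},
\]
where the second equality comes from pulling a factor of $q^{N-M}$ out of each of the $k$ terms. So the $q^{(N-M)k}\gbinom{M}{k}_q$ factor is already isolated, and everything reduces to showing that the remaining product equals $1+O(q^{k-M})$.

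Next I would estimate the numerator and denominator of the product separately. For any $x\ge 1$ the trivial bounds $1-\sum_i \eps_i \le \prod_i (1-\eps_i) \le 1$ (valid when each $\eps_i\in[0,1]$) give
\[
1 - \prod_{i=0}^{k-1}(1 - q^{-(N-i)}) \le \sum_{i=0}^{k-1} q^{-(N-i)} = q^{-N}\cdot\frac{q^k-1}{q-1} = O(q^{k-N}),
\]
and the same argument with $N$ replaced by $M$ gives $\prod_{i=0}^{k-1}(1-q^{-(M-i)}) = 1 - O(q^{k-M})$. Since $M\ge k$ we have $q^{-(M-i)}\le q^{-1}$, so the denominator product is bounded away from $0$ and we may invert it to obtain $\prod_{i=0}^{k-1}(1-q^{-(M-i)})^{-1} = 1 + O(q^{k-M})$. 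Using $N\ge M$ to absorb $O(q^{k-N})$ into $O(q^{k-M})$, the ratio of the two products is $1+O(q^{k-M})$, which is exactly what we need.

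No step is really an obstacle here; the only mild subtlety is to verify that the implicit constants in the $O(\cdot)$ bounds are absolute (depending only on $q$, not on $N,M,k$), which follows since the geometric sum $\sum_{i=0}^{k-1}q^i/(q-1)$ is bounded by $q^k/(q-1)^2$ uniformly, and $1/(1-q^{-(M-i)})\le q/(q-1)$ for all $i$. Combining these estimates with the factored identity displayed above finishes the proof.
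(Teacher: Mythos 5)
Your proof is correct and follows essentially the same route as the paper: both arguments pull the factor $q^{(N-M)k}$ out of the defining product and control the correction factors $\prod_{i=0}^{k-1}(1-q^{-(N-i)})$ and $\prod_{i=0}^{k-1}(1-q^{-(M-i)})$ via geometric-series estimates, so the two proofs differ only in whether one manipulates $\gbinom{N}{k}_q$ directly or the ratio $\gbinom{N}{k}_q/\gbinom{M}{k}_q$. One small point: to invert the denominator product with a uniform constant you should bound it below by $\prod_{j\ge 1}(1-q^{-j})>0$ (the per-factor observation $1-q^{-(M-i)}\ge 1-q^{-1}$ by itself does not keep the $k$-fold product away from zero), after which your $1+O(q^{k-M})$ conclusion matches the paper's.
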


\begin{proof}
    \begin{align*}
        \gbinom{N}{k}_q
   &     = \frac{(q^N-1)(q^{N-1}-1)\cdots(q^{N-k+1}-1)}{(q^k-1)(q^{k-1})\cdots(q-1)} 
        = \frac{q^Nq^{N-1}\cdots q^{N-k+1}}{(q^k-1)(q^{k-1})\cdots(q-1)} \left(1+O(q^{k-N})\right)\\
   &     = q^{(N-M)k} \frac{q^Mq^{M-1}\cdots q^{M-k+1}}{(q^k-1)(q^{k-1})\cdots(q-1)} \left(1+O(q^{k-N})\right)\\
     &   =q^{(N-M)k} \frac{(q^M-1)(q^{M-1}-1)\cdots(q^{M-k+1}-1)}{(q^k-1)(q^{k-1})\cdots(q-1)}  \left(1+O(q^{k-N}+q^{k-M})\right)\\
     &=q^{(N-M)k} \gbinom{M}{k}_q \left(1+O(q^{k-M})\right).\qed
    \end{align*}
\end{proof}

\noindent {\em Proof of Theorem~\ref{thm:colouring}.~}  Fix $0<\eps<1$.   Let $S_1,\dots,S_r$, where $r=\gbinom{n}{k}_q$, be the set of $(n-k)$-dimensional subspaces of $\FF_q^n$. Let $E$ be the set of column vectors in $A_m$.
    For every $1\le i\le r$, let $X_i$ be the indicator variable for $S_i\cap E=\emptyset$,
    and let $X=\sum_{i=1}^r X_i$. Hence $M[A_{m}]$ is $k$-colourable if and only if $X>0$.
    It is easy to see that
     $$\ex X_i=\mu:=(1-q^{-k})^m \quad\text{and}\quad \ex X= \gbinom{n}{k}_q\mu.$$
     By Lemma \ref{gbinom approxoximation},
    $$\\log\ex X = k(n-k)\log q+m\log(1-q^{-k}),$$
    which goes to $\infty$ if $m=-(1-\eps)k(n-k)\log q/\log(1-q^{-k})$,
    and goes to $-\infty$ if $m=-(1+\eps)k(n-k)\log q/\log(1-q^{-k})$.
    
    It suffices now to show that $\ex X(X-1)\sim (\ex X)^2$ if $m=-(1-\eps)k(n-k)\log q/\log(1-q^{-k})$.
   Consider a pair of distinct $(n-k)$-dimensional subspaces $(S_i,S_j)$. Let $h=\dim(S_i\cap S_j)-(n-2k)$. Thus, $\max\{0,2k-n\}\le h\le k-1$.
    Note that $X_iX_j=1$ if and only if $(S_i\cup S_j)\cap E=\emptyset,$
    and $$|S_i\cup S_j|=|S_i|+|S_j|-|S_i\cap S_j|=2q^{n-k}-q^{n-2k+h}.$$
    Therefore, $$\ex X_iX_j =\left(1-\frac{|S_i\cup S_j|}{q^n}\right)^m =\pi_h:=(1-2q^{-k}+q^{-2k+h})^{m}.$$
  Notice that $\pi_0=\mu^2$.    
    For $0\le h\le k$, let $N_h$ denote the number of $(n-k)$-dimensional subspaces whose intersection with $S_i$ is $n-2k+h$ (notice that this number is independent of $S_i$).
    Then $$\ex X(X-1) = \sum_{i,j} \ex X_iX_j = \sum_{i=1}^r\sum_{h=\max\{0,2k-n\}}^{k-1}\ \sum_{\substack{\dim(S_i\cap S_j)\\=n-2k+h}}\ex X_iX_j 
        = \gbinom{n}{k}_q \sum_{h=\max\{0,2k-n\}}^{k-1}N_h\pi_h.$$
        \begin{claim}\label{claim:sum}  Suppose that $(q,k)\neq (2,1)$. Let $\Lambda=2\log n$.
      If $k\le n/2-\Lambda$ then  $\sum_{h=0}^{k-1}N_h\pi_h \sim N_0\pi_0$. If $k\ge n/2+\Lambda$ then $\sum_{h=2k-n}^{k-1}N_{h}\pi_h \sim N_{2k-n}\pi_{2k-n}$. If $|k-n/2|\le \Lambda$ then $\sum_{h=\max\{0,2k-n\}}^{k-1}N_h\pi_h \sim \sum_{h=\max\{0,2k-n\}}^{2\Lambda} N_h\mu^2$.
              \end{claim}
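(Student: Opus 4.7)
The plan is to estimate $\sum_h N_h\pi_h$ term-by-term using the asymptotic $N_h\asymp q^{(k-h)(n-k+h)}$ from~\eqref{eq:q_binom} and Lemma~\ref{gbinom approxoximation}, together with the factorization $\pi_h=\mu^2(1+\delta_h)^m$ where $\delta_h:=q^{-2k}(q^h-1)/(1-q^{-k})^2$. Setting $h_0:=\max\{0,2k-n\}$ and using the identity $(k-h)(n-k+h)=k(n-k)-h(n-2k+h)$, one obtains $N_h/N_{h_0}\asymp q^{-(h-h_0)(n-2k+h+h_0)}$, which is bounded above by $1$ on the range $[h_0,k-1]$. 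Moreover, for $h\ge h_0$ the inequality $\log(1+x)\le x$ gives $\log(\pi_h/\pi_{h_0})\le m(\delta_h-\delta_{h_0})$.

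For Cases 1 ($k\le n/2-\Lambda$) and 2 ($k\ge n/2+\Lambda$), combine these estimates to bound
$$\log(N_h\pi_h/N_{h_0}\pi_{h_0})\le -(h-h_0)(n-2k+h+h_0)\log q + m(\delta_h-\delta_{h_0}).$$
The key step is to show that the ratio $m(\delta_h-\delta_{h_0})/((h-h_0)(n-2k+h+h_0)\log q)$ is uniformly bounded above by $1-\gamma$ for some fixed $\gamma>0$, given $(q,k)\ne(2,1)$ and $k\ge 2$. Using the definition of $m$ together with $-\log(1-q^{-k})\ge q^{-k}$, this reduces to verifying $(1-\eps)/q<1$ plus lower-order corrections; the extremal case is $h=k-1$, where the ratio tends to $(1-\eps)/q<1$ for all $q\ge 2$. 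Since $n-2k+h+h_0\ge 2\Lambda$ throughout in both cases, we deduce $\log(N_h\pi_h/N_{h_0}\pi_{h_0})\le -2\gamma\Lambda(h-h_0)\log q$, and summing the resulting geometric series in $h-h_0\ge 1$ yields $o(1)$.

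For Case 3 ($|k-n/2|\le\Lambda$), split the sum at $h=2\Lambda$. For $h\in[h_0,2\Lambda]$, first verify $m\delta_h=o(1)$: with $k=n/2+O(\log n)$, one has $m=O(n^2 q^k)$ (again using $-\log(1-q^{-k})\ge q^{-k}$) and $q^{h-2k}=q^{O(\log n)-n}$, whence $mq^{h-2k}\to 0$, yielding $\pi_h\sim\mu^2$ uniformly. This gives $\sum_{h_0\le h\le 2\Lambda}N_h\pi_h\sim\sum_{h_0\le h\le 2\Lambda}N_h\mu^2$. For the tail $h>2\Lambda$, set $u=h-h_0$ and $s=k-n/2$; a direct computation gives $(h-h_0)(n-2k+h+h_0)=u(u+2|s|)\ge\max(u^2,2u|s|)$. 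Splitting the tail at $h=n/4$, one checks that $m\delta_h$ remains $o(1)$ when $h\le n/4$, while for $h>n/4$ we have $u^2=\Omega(n^2)$ which overwhelms the worst-case bound $m\delta_h=O(n^2)$. In both ranges the contributions are exponentially small, so the tail sum is $o(1)$.

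The main obstacle is establishing the uniform lower bound $1-m(\delta_h-\delta_{h_0})/((h-h_0)(n-2k+h+h_0)\log q)\ge\gamma>0$ in Cases 1 and 2, which relies essentially on the slack factor $(1-\eps)$ built into the definition of $m$. The exclusion $(q,k)\ne(2,1)$ corresponds exactly to the boundary case where this inequality becomes tight; in that degenerate case the range $[h_0,k-1]=\{0\}$ is trivial and the claim holds vacuously, so no argument is needed.
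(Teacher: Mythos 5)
Your overall strategy is the same as the paper's: compare each term $N_h\pi_h$ to the extreme term via $N_h/N_{h_0}\asymp q^{-(h-h_0)(n-2k+h+h_0)}$ and $\log(\pi_h/\pi_{h_0})\le m(\delta_h-\delta_{h_0})$, and win because the polynomial quantity $(h-h_0)(n-2k+h+h_0)\log q$ beats $m\,q^{h-2k}/(1-q^{-k})^2$ with a constant margin; this is exactly the paper's inequality \eqn{eq2:inequality} in disguise, and your geometric-series summation replaces the paper's per-term $o(1/k)$ bound. Your observation that the excluded case $(q,k)=(2,1)$ makes the range $\{h_0,\dots,k-1\}$ a single point, so the claim is vacuous there, is also correct.

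However, two of your justifications do not hold up as written, and they sit precisely at the delicate points. First, in Cases 1 and 2 you assert that the uniform bound by $1-\gamma$ "reduces to verifying $(1-\eps)/q<1$ plus lower-order corrections, the extremal case being $h=k-1$." That limit is only valid when $k\to\infty$; for bounded $k$ the "corrections" are of constant order. For instance at $(q,k,h)=(2,2,1)$ the ratio tends to $2\cdot\frac{2^{-4}(2-1)}{(-\log(1-1/4))(1-1/4)^{2}}(1-\eps)\approx 0.77(1-\eps)$, not $(1-\eps)/2$; the bound is still below $1$, but only after an explicit check of the small-$k$ cases, which is exactly what the paper does separately (its $k=O(1)$ case via Lemma~\ref{lem:inequality}, where the exclusion $(q,k)\ne(2,1)$ genuinely matters, versus its $k=\omega(1)$ case via \eqn{eq2:inequality}). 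As written, the bounded-$k$ verification is missing. Second, in Case 3 your treatment of the tail $h>n/4$ argues that "$u^2=\Omega(n^2)$ overwhelms the worst-case bound $m\delta_h=O(n^2)$"; this is not a valid step, since both sides are $\Theta(n^2)$ (at $h=k-1$ one has $u(u+2|s|)\sim n^2/4$ while $m\delta_h\sim\frac{1-\eps}{q}\,k(n-k)\log q$, also of order $n^2$), and an order-of-magnitude comparison cannot decide it. What saves you is again the constant-level comparison $m\delta_{k-c}\lesssim(1-\eps)\frac{n^2}{4}q^{-c}\log q$ versus $(k-c-O(\Lambda))^2\log q$, i.e. the same $\frac{1-\eps}{q}<1$ margin you use elsewhere; you need to say this, not appeal to $\Omega$ versus $O$. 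Both gaps are repairable with the machinery you have already set up, but the proof is incomplete without these verifications.
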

              We first consider the case that $(q,k)\neq (2,1)$. 
        By~\eqn{eq:q_binom} and Lemma \ref{gbinom assymptotic}, 
        \begin{eqnarray*}
        N_0&=&N(n,n-k,n-k,n-2k)=q^{k^2}\gbinom{n-k}{k}_q\sim \gbinom{n}{k}_q \quad \mbox{if $n/2-k=\omega(1)$}\\
        N_{2k-n}&=& N(n,n-k,n-k,0)= q^{(n-k)^2} \gbinom{k}{n-k}_q \sim \gbinom{n}{n-k}_q \quad \mbox{if $k-n/2=\omega(1)$.} 
        \end{eqnarray*}
    Thus, if $n/2-k\ge \log n$ then
    $$\ex X^2 \sim 
    \gbinom{n}{k}_q N_0\pi_0 \sim \gbinom{n}{k}^2_q \mu^2 = (\ex X)^2.
    $$
    On the other hand, if $k-n/2\le\log n$ and $n-k\ge \log_q n+ \log_q\log n + \omega(1)$ then
    \begin{eqnarray*}
    \ex X(X-1) &\sim&  \gbinom{n}{k}_q N_{2k-n}\pi_{2k-n} \sim \gbinom{n}{k}^2_q \left(1-2q^{-k}+q^{-n}\right)^m\\
    &=&\gbinom{n}{k}^2_q \left((1-q^{-k})^2+O(q^{-n})\right)^m = (\ex X)^2(1+O(mq^{-n})) \sim (\ex X)^2. 
    \end{eqnarray*}
    We know that
    $$
    \gbinom{n}{k}_q =\sum_{h=\max\{0,2k-n\}}^k N_h.
    $$
    However,
    $$
    \frac{N_h}{N_{h-1}}\asymp q^{-2h+2k-n+1} \le 1/q \quad \mbox{for all $\max\{0,2k-n\}+1\le h\le k$}.
    $$
    It follows that 
    $$
    \gbinom{n}{k}_q \sim \sum_{h=\max\{0,2k-n\}}^{\log n} N_h.
    $$
   Thus, $\ex X(X-1) \sim \gbinom{n}{k}_q^2\mu^2=(\ex X)^2$. The theorem for the case that $(q,k)\neq (2,1)$ follows by combining all the three ranges of $k$. 
   
   In the case $(q,k)= (2,1)$ the critical number jumps from one to two when the first even circuit appears, which occurs in some step $n+O_p(1)\sim n$. Hence the theorem holds for the case $(q,k)= (2,1)$ as well.   \qed
  \ss
  
  The proof of Claim~\ref{claim:sum} uses the following inequality.
  \begin{lemma}\label{lem:inequality}
  $(1-q^{-k})^2>kq^{-k}$ for all positive integers $q\ge 2$ and $k\ge 1$ except that $(q,k)=(2,1)$. 
  \end{lemma}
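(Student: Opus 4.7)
The plan is to substitute $x = q^{-k}$ and reduce the inequality to a simple quadratic statement in $x$, then use monotonicity to collapse the analysis down to a single one-parameter family $q = 2$.

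Writing $x = q^{-k}$, the inequality $(1-q^{-k})^2 > kq^{-k}$ becomes $h(x) := x^2 - (k+2)x + 1 > 0$. Since $h'(x) = 2x - (k+2) < 0$ on $[0,1]$ for every $k \ge 1$, the function $h$ is strictly decreasing on $[0,1]$. For fixed $k \ge 1$, the value $x = q^{-k}$ is largest (over integers $q \ge 2$) precisely at $q = 2$. Hence it suffices to verify $h(2^{-k}) > 0$ for every $k \ge 2$, i.e.\ to prove the single-variable inequality $(1 - 2^{-k})^2 > k \cdot 2^{-k}$ for all $k \ge 2$.

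Multiplying through by $4^k$, this last inequality is equivalent to $(2^k - 1)^2 > k \cdot 2^k$, and after expanding and rearranging, to $2^k + 2^{-k} > k+2$. For $k = 2$ the left side is $4.25$ and the right side is $4$, so the inequality holds. For $k \ge 3$, a trivial induction shows $2^k \ge k+3$ (base $k=3$: $8 \ge 6$; step: $2^{k+1} \ge 2(k+3) \ge k+4$), and since $2^{-k} > 0$, we get $2^k + 2^{-k} > k+2$ as required. The excluded case $(q,k) = (2,1)$ is exactly the one where this final inequality fails, since $2 + 2^{-1} = 2.5 < 3$.

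There is no real obstacle here beyond picking the right substitution. Once the problem is framed in terms of the single variable $x = q^{-k}$ and the monotonicity of $h$ is noted, the two-parameter family $(q,k)$ collapses to the $q=2$ slice, and what remains is a clean elementary induction with one boundary check at $k=2$.
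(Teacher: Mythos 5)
Your reduction has a gap: the lemma claims the inequality for \emph{all} $(q,k)$ with $q\ge 2$, $k\ge 1$ except $(2,1)$, which includes the cases $k=1$, $q\ge 3$, and your argument never covers them. After noting that $h(x)=x^2-(k+2)x+1$ is decreasing on $[0,1]$ and that for fixed $k$ the worst value of $x=q^{-k}$ is at $q=2$, you conclude it ``suffices to verify $h(2^{-k})>0$ for every $k\ge 2$.'' That reduction to the $q=2$ slice only proves something when the $q=2$ endpoint itself satisfies the inequality; at $k=1$ it does not (that is precisely the excluded case), so your monotonicity step gives no information about $(3,1),(4,1),\dots$, and you simply never return to them. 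The fix is immediate with the tools you already set up: for $k=1$ and $q\ge 3$ one has $x=q^{-1}\le 1/3$, and since $h(x)=x^2-3x+1$ is decreasing on $[0,1]$, $h(x)\ge h(1/3)=1/9>0$; equivalently, $(1-q^{-1})^2\ge 4/9>1/3\ge q^{-1}$. With that one line added, the proof is complete: the algebra for $k\ge 2$ (reduction to $2^k+2^{-k}>k+2$, the check at $k=2$, and the induction $2^k\ge k+3$ for $k\ge 3$) is all correct.

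For comparison, the paper argues in the orthogonal direction: it fixes $q$, sets $f_q(x)=(1-q^{-x})^2-xq^{-x}$, shows $f_q$ is increasing in the real variable $x\ge 1$, and then checks the base cases $x=1$ (which works for $q\ge 3$) and $x=2$ (which works for all $q$). That route forces the $k=1$, $q\ge 3$ case to be checked explicitly, which is exactly the case your slicing by $q$ lets slip through; once you patch it, your substitution $x=q^{-k}$ is arguably the more elementary argument, since it avoids differentiating in $k$ altogether.
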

    \proof Let $f_q(x)=(1-q^{-x})^2-xq^{-x}$ for $x\ge 1$. We find that $f_q'(x)=q^{-x}((2-2q^{-x}+x)\log q-1)>0$. Moreover, $f_q(1)=1-3q^{-1}+q^{-2}>0$ except that $q=2$, and $f_q(2)=1-4q^{-2}+q^{-4}>0$ for all $q\ge 2$. The first assertion follows.
    Similarly letting $g(x)=(1-2^{-k})^2\log 2-k 2^{-2k+1}$ the second assertion follows by $g'(x)=2^{-2k+1}(2k \log 2 - 1+2^k-(\log 2)^2)>0$ and $g(2)>0$.
    \qed \ss

        \noindent {\em Proof of Claim~\ref{claim:sum}.~}
  Note that
\begin{eqnarray*}  
  \pi_h\le ((1-q^{-k})^2+q^{-2k+h})^m&\le &\mu^2\exp\left(m \frac{q^{-2k+h}}{(1-q^{-k})^2}\right),\quad \mbox{for all $\max\{0,2k-n\}\le h\le k$}.
  \end{eqnarray*}
Note also that
and $\pi_h \sim \mu^2$ if $k-h\ge 4\log n$.

$$
\frac{N_h}{N_0}\asymp q^{-h(n-2k+h)}\quad \frac{N_h}{N_{2k-n}}\asymp q^{-h(n-2k+h)}.
$$
It suffices to show that for every $h$, 
\begin{equation}
N_h\pi_h/N_0\pi_0=N_h\pi_h/N_0\mu^2=o(1/k). \label{eq:ratio}
\end{equation}
We consider the following cases of $k$:

In the first case we consider $k=O(1)$. For every $1\le h\le k$

$$
\log\frac{N_h\pi_h}{N_0\pi_0} = -hn\log q +m \frac{q^{-2k+h}}{(1-q^{-k})^2} +O(1) \le -n\log q\left(h -k \frac{q^{-k}}{(1-q^{-k})^2} +o(1) \right),
$$
where the above inequality used $-\log(1-q^{-k})\ge q^{-k}$.
Since $h\ge 1$, the above is $-\Theta(n)$ by Lemma~\ref{lem:inequality}.  Thus,~\eqn{eq:ratio} holds when $k=O(1)$.

In the second case we consider $k=\omega(1)$ and $n/2-k\ge \log n$:
\begin{eqnarray}  
\log\frac{N_h\pi_h}{N_{0}\pi_{0}} &=& -h(n-2k+h)\log q + m \frac{q^{-2k+h}}{(1-q^{-k})^2} +O(1)\nonumber\\
&\le &-h(n-2k+h)\log q + (1+O(q^{-k})) k(n-k)q^{-k+h} \log q.\label{eq:log_ratio}
\end{eqnarray}
  It suffices to prove that 
  \begin{equation}
  h(n-2k+h)> (1+\alpha)k(n-k)q^{-k+h}\label{eq2:inequality}
  \end{equation}
  for some constant $\alpha>0$.
   Let $0<\eps<1/8$. We further discuss two cases:
  
  {\em (a). $h/k\le 1-\eps$.} If $n-2k=\Theta(n)$. Then the left hand side of~\eqn{eq2:inequality} is $\Theta(n)$, whereas the right hand side above is $o(n)$ since $k=\omega(1)$. Thus~\eqn{eq2:inequality} holds. If $n-2k=o(n)$. Then, $k=\Theta(n)$. Hence the right hand side above is $o(1)$ whereas the left hand side is $\Theta(\log n)$. Thus~\eqn{eq2:inequality} holds.
  
  {\em (b). $h/k\ge 1-\eps$.} Let $c=k-h$. Then, $1\le c\le \eps k$. The inequality~\eqn{eq2:inequality} is equivalent to
  \begin{equation}
  q^c(k(n-k)-c(n-c))>(1+\alpha)k(n-k).\label{eq3:inequality}
  \end{equation}
  Since $c\le \eps k$ and $k\le n/2$, $c(n-c)\le 2\eps k(n-k)$. Hence, the left hand side of~\eqn{eq3:inequality} is at least $(1-2\eps) q^c k(n-c)\ge (3/4)q^c k(n-k)$, which is greater than the right hand side as $q\ge 2$ and $c\ge 1$.

  In the third case we consider $k-n/2\ge \log n$. As $k=\omega(1)$ we have that~\eqn{eq:log_ratio} still holds after replacing $N_0\pi_0$ by $N_{2k-n}\pi_{2k-n}$,  and thus it suffices to prove~\eqn{eq2:inequality}, i.e.\
  \begin{equation}
  q^{k-h} h (n-2k+h) > (1+\alpha) k(n-k) \quad \mbox{for all $2k-n+1\le h\le k-1$}. \label{eq4:inequality}
  \end{equation}
  Similarly as before, if $h/k\le 1-\eps$ then we can easily verify~\eqn{eq4:inequality}. Suppose that $h\ge (1-\eps)k$. Consider the derivative of $f(h)=q^{-h} x (n-2k+h)$ we find that $f'(h)=q^{-h}((\log q) h ( 2k -n  - h) + 2h - 2k + n)$. Taking the derivative of $(\log q) h ( 2k -n  - h) + 2h - 2k + n$ we obtain $( 2k- n - 2h)\log q + 2<0$ for all $(1-\eps)k\le h\le k-1$.  Hence, $f$ is concave in the interval $(1-\eps)k\le h\le k-1$. Thus, to verify~\eqn{eq4:inequality} it suffices to show that $f(h)>k(n-k)$ for $h=(1-\eps)k$ and for $h=k-1$. We have already argued that~\eqn{eq4:inequality} holds for $h=(1-\eps)k$. For $h=k-1$, the left hand side of~\eqn{eq4:inequality} is $q(k-1)(n-k-1)$ which is clearly greater than the right hand side with sufficiently small $\alpha$ as $q\ge 2$, $k\ge n/2$ and $n-k=\omega(1)$.
  
 In the final case let's consider the case that $k=n/2+O(\Lambda)$. It is easy to see that for $k$ in this range and for $h$ between $\max\{0,2k-n\}+1$ and $2\Lambda$, $\pi_h\sim \mu^2$. It is thus sufficient to prove that 
  $$
  \frac{N_h\pi_h}{N_{*}\pi_{*}}=o(1/n), \quad\mbox{where $*=\max\{0,2k-n\}$.}
  $$
  For simplicity we assume that $k\le n/2$ and the case that $k\ge n/2$ is symmetric.
  
  We have~\eqn{eq:log_ratio} and want to prove~\eqn{eq2:inequality} for every $1\le h\le 2\Lambda$. The left hand side is at least 1, and the right hand side is $o(1)$. So the inequality holds. \qed


\section*{Appendix}

\noindent{\em Proof of Remark~\ref{remark:g}.~} 
    Since $g_a(b(a))=0$, we have 
    \begin{equation}
    b(a)\log b(a)+a\log(q-1)=a\log a+(b(a)-a)\log(b(a)-a)+\log q\quad \text{for every $0<a\le 1$}. \label{eq2:g}
    \end{equation}
    Differentiating with respect to $a$ gives 
    \begin{equation}\label{eq:b_derivative}
    b'\big(\log b-\log(b-a)\big)=\log a-\log(q-1)-\log(b-a).
    \end{equation}
    Implicitly differentiating again gives 
    $$b''\big(\log b-\log(b-a)\big)ab(b-a) = a^2(b')^2-2abb'+b^2 = (ab'-b)^2 \ge0.$$
    Thus, $b''(a)\ge 0$ for every $0<a\le 1$, and so $b$ is convex.
    Moreover, we found that $(a,b)=(a^*,1)$ where $a^*=1-1/q$ satisfies~\eqn{eq2:g}, implying that $b(a^*)=1$. Moreover, plugging $(a,b)=(a^*,1)$ into~\eqn{eq:b_derivative} gives $b'(a^*)=0$, which means that $a^*$ minimizes $b(a)$.
    Lastly, observe that $g_1(2)>0$ and $g_a(1/a)\to-\log q$ as $a\to0$. Since for every fixed $a>0$, $g_a(y)$ is an increasing function of $y$ on $y\ge a$,
    it follows that $b(1)<2$ and that for $a$ sufficiently small, $b(a)>1/a$. Hence $b(a)\to\infty$ as $a\to0$. \qed

\end{document}